\def\OO{{\mathcal O}}
\def\F{\mathcal{F}}
\def\G{\mathcal{G}}
\def\H{\mathcal{H}}
\def\cP{\mathcal{P}}
\def\Pic0{{\rm Pic}^0}
\def\DD{{\mathbf{D}}}
\theoremstyle{plain}
\newtheorem{theorem}{Theorem}[section]
\newtheorem{theoremalpha}{Theorem}
\newtheorem{corollaryalpha}[theoremalpha]{Corollary}
\newtheorem{proposition/example}[theorem]{Proposition/Example}
\newtheorem{corollary}[theorem]{Corollary}
\newtheorem{lemma}[theorem]{Lemma}
\newtheorem{claim}[theorem]{Claim}
\newtheorem{step}[theorem]{Step}
\theoremstyle{definition}
\newtheorem{remark}[theorem]{Remark}
\newtheorem{example}[theorem]{Example}
\newtheorem{conjecture/question}[theorem]{Conjecture/Question}
\newtheorem{remark/definition}[theorem]{Remark/Definition}
\newtheorem{notation/assumptions}[theorem]{Assumptions/Notation}
\numberwithin{equation}{section}
\theoremstyle{remark}
\begin{document}

\title{standard canonical support loci }

\author[G. Pareschi]{Giuseppe Pareschi}
\address{Dipartimento di Matematica, Universit\`a di Roma, Tor Vergata, V.le della
Ricerca Scientifica, I-00133 Roma, Italy} \email{{\tt
pareschi@mat.uniroma2.it}}
\dedicatory{Dedicated to Philippe Ellia on the occasion of his $60$th birthday}

\begin{abstract}  We consider the union of certain irreducible components of cohomological support loci of the canonical bundle, which we call standard.  We prove a structure theorem about them and single out some particular cases, recovering and improving results of Beauville and Chen-Jiang. Finally, as an example of application, we extend   to compact Kahler manifolds the classification of smooth complex projective varieties with $p_1(X)=1$, $p_3(X)=2$ and $q(X)=\dim X$. 
\end{abstract}

\thanks{}

\maketitle


\setlength{\parskip}{.1 in}

\section{introduction}

Let $X$ be a compact K\"ahler manifold. This paper is concerned with the cohomological support loci of the canonical bundle of $X$, namely
\begin{equation}\label{GLsets} V^i(K_X)=\{\mu\in \Pic0 X\>|\>h^i(K_X\otimes P_\eta)>0\>\}
\end{equation}
(here $P_\eta$ denotes the line bundle on $X$ corresponding to $\eta\in\Pic0 X$ via the choice of a Poincar\`e line bundle). 
One knows the following:

\noindent(a)  \emph{Every irreducible component  $W$ of   $V^i(K_X)$  is a translate of  a (compact) subtorus $T\subset \Pic0 X$. }\\
This can be rephrased as follows: we denote $\pi: \mathrm{Alb}\, X\rightarrow B:=\Pic0 T$ the dual quotient. This defines the composed map $f:X\rightarrow B$ sitting in the  commutative diagram
\begin{equation}\label{fundamental1}\xymatrix{X\ar[r]^{alb_X}\ar[rd]^f& \mathrm{Alb}\, X\ar[d]^\pi\\
&B\\}
\end{equation} 
Then $T=f^*\Pic0 B$  and, in this notation, item (a) is that, for some $\eta\in\Pic0X$,
\begin{equation}\label{compo}W=f^*\Pic0 B+ \eta \>  . 
\end{equation}

\noindent (b)  \emph{The map $f$
 verifies the inequality
\begin{equation}\label{fundamental2}
 \dim X-\dim f(X)\ge i \>.
\end{equation}}
 Both (a) and (b) are due to Green and Lazarsfeld (\cite{gl2}).

\noindent(c) \emph{The translating points $\eta$ are torsion modulo $f^*\Pic0 B$.  }\\
This is due to Simpson \cite{simpson} in the projective case (see also \cite{schnell-torsion}) and to Wang in the compact K\"ahler setting (\cite{wang}, see also  \cite{pps} \S12), proving a conjecture of Beauville and Catanese.

 The understanding of the loci $V^i(K_X)$ is often crucial in the study of irregular compact K\"ahler varieties but, despite the above powerful theorems, our knowledge of them is still unsatisfactory in some respects. 
   For example,  a clear geometric reason for the presence or absence of non-trivial $0$-dimensional components is lacking. Another issue is that
 there is no clear description of the points of finite order appearing as translating points. 
 In fact in \cite{schnell-torsion} 1.1.4-5 \ Schnell  shows an example where, for some $i$, $V^i(K_X)$  is not \emph{complete}, namely it happens that  $W=f^*\Pic0 B+\eta$   is a component of $V^i(K_X)$ but there  is a integer $k$ with $gcd(k,ord([\eta]))=1$ such that $f^*\Pic0 B+k\eta$ is not (in fact, it is not even contained in $V^i(K_X)$).

In this paper we show that there is a part of the cohomological support loci, which we call \emph{standard},  where the latter problem does not arise and moreover the translating points have a sufficiently clear geometric description. We will use the following terminology: a pair $(W,i)$, with $W$ irreducible component of $V^i(K_X)$, is called \emph{standard} if equality holds in (\ref{fundamental2}), i.e. 
\[\dim X-\dim f(X)=i.\]  
For example, the only  standard pair such that $W$ is $0$-dimensional  is $(\{\hat 0\},\dim X)$. 

The  union all subvarieties $W$ such that $(W,i)$ is a standard pair  will be referred to as \emph{the standard part of $V^i(K_X)$.}  As another immediate example, (\ref{fundamental2}) implies that the positive-dimensional part of $V^{\dim X -1}(K_X)$ coincides with  its standard part (of course they might be empty). 

According to some of the current literature, a morphism with connected fibers $g:X\rightarrow Y$ onto a normal compact analytic space $Y$ of maximal Albanese dimension (that is, the image of the Albanese map of $Y$ is equal to $\dim Y$) is called an \emph{irregular fibration}. We denote $\Pic0 (g)$ the kernel of the restriction map of $\Pic0 X$ to  a generic fiber of $g$. We recall that $\Pic0 (g)$ sits in an extension as
\begin{equation}\label{finite} 0\rightarrow g^*\Pic0 Y\rightarrow \Pic0 (g) \rightarrow \Gamma_g\rightarrow 0
\end{equation}
with $\Gamma_g$ a finite subgroup of $\Pic0 X/g^*\Pic0 Y$ \footnote{proof: let $X_g$ be a general fiber of $g$. Then $g$ induces a map $a:X_g\rightarrow K:=\ker(\mathrm{Alb}\, X\rightarrow \mathrm{Alb}\, Y)$ whose image spans $K$. Hence the homomorphism $\Pic0 K\rightarrow \Pic0 F$ has finite kernel. Then the exact sequence (\ref{finite}) follows from the dualization of the exact sequence $0\rightarrow K\rightarrow \mathrm{Alb}\, X\rightarrow \mathrm{Alb}\, Y\rightarrow 0$}.  Thus $\Pic0 (g)$ is the finite union of translated subtori $g^*\Pic0 Y+\eta$ for $\eta\in\Pic0 X$ such that its class modulo $g^*\Pic0 Y$ is  in $\Gamma_g$.

\begin{theoremalpha}\label{1} For each $i$ the standard part of $V^i(K_X)$ is the union, for all irregular fibrations $g:X\rightarrow Y$ with $\dim X-\dim Y=i$, of\\
- the subvarieties $\Pic0 (g) \smallsetminus (g^*\Pic0 (Y) +N_g)$, where $N_g$ is a finite subgroup of $\Pic0(g)$, if $\chi(K_{\widetilde Y})=0$, where $\widetilde Y$ is any  desingularization of $Y$  \footnote{the holomorphic Euler characteristic $\chi(K_{\widetilde Y})$ does not depend on the particular resolution $\widetilde Y$ considered. Since one can choose a $\widetilde Y$ which is Kahler  (see e.g. \cite{catanese2}, 1.9), $\chi(K_{\widetilde Y})\ge 0$ by generic vanishing (see below)}. If, in addition, the Albanese morphism of $Y$ is surjective then $N_g=\{0\}$. \\ 
 - the subgroups $\Pic0 (g)$ otherwise. 
\end{theoremalpha}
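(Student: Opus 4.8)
\emph{Strategy.} The statement being an equality of subsets of $\Pic0 X$, I would prove the two inclusions separately. Apart from (a)--(c) the argument would rest on: Koll\'ar's theorems on higher direct images of dualizing sheaves --- torsion-freeness of the $R^qg_*\omega_X$ and the splitting $Rg_*\omega_X\simeq\bigoplus_q R^qg_*\omega_X[-q]$, whence the Leray spectral sequence for $\omega_X$ (and for its twists by line bundles lying in $\Pic0(g)$) along an irregular fibration $g\colon X\to Y$ degenerates, together with the identification of $R^{\dim X-\dim Y}g_*\omega_X$ with $\omega_Y$ generically; the generic vanishing package on a smooth model $\widetilde Y$, in the form of Chen--Jiang decompositions of the sheaves $(\mathrm{alb}_Y)_*R^qg_*\omega_X$ on $\mathrm{Alb}\,Y$, so that their cohomological support loci are finite unions of torsion translates of subtori, and --- recalling that $\chi(K_{\widetilde Y})\ge0$ by generic vanishing, with $\chi(K_{\widetilde Y})>0\iff V^0(K_{\widetilde Y})=\Pic0 Y$ --- the $V^0$ of the top direct image is all of $\Pic0 Y$ exactly when $\chi(K_{\widetilde Y})>0$; and the torsion statement (c). I would also use repeatedly that an irregular fibration stays one under base change along a connected \'etale cover of the target, which multiplies the relevant Euler characteristics by its degree.

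\emph{The inclusion $\subseteq$.} Given a standard pair $(W,i)$, write $W=f^*\Pic0 B+\eta$ with $\eta$ of finite order modulo $f^*\Pic0 B$, and take the Stein factorization $X\xrightarrow{\,g\,}Y\xrightarrow{\,h\,}B$ of $f\colon X\to B$. Since $h$ is finite and $B$ abelian, $Y$ has maximal Albanese dimension, so $g$ is an irregular fibration, and standardness forces $\dim X-\dim Y=\dim X-\dim f(X)=i$. It is standard that the translating point of a component of $V^i(K_X)$ lies in $\Pic0$ of the associated fibration (for $i$ maximal this amounts to the non-vanishing of $R^ig_*(\omega_X\otimes P_\xi)$ for generic $\xi\in W$, whence $h^0(X_g,P_\xi^{-1}|_{X_g})\neq0$ and therefore $P_\xi|_{X_g}=\OO_{X_g}$ on the compact K\"ahler fibre $X_g$), so $\eta\in\Pic0(g)$; with $f^*\Pic0 B\subseteq g^*\Pic0 Y\subseteq\Pic0(g)$ this gives $W\subseteq\Pic0(g)$, more precisely $W$ lies in the single coset $g^*\Pic0 Y+\eta$. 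If $\chi(K_{\widetilde Y})\neq0$ the asserted contribution of $g$ is all of $\Pic0(g)$ and this finishes the inclusion; if $\chi(K_{\widetilde Y})=0$ it remains to check $\eta\notin g^*\Pic0 Y+N_g$, which --- granting the description of $N_g$ obtained in the other inclusion --- holds because containment of $g^*\Pic0 Y+\eta$ in $V^i(K_X)$ would make the Green--Lazarsfeld map of the enclosing component factor through a proper fibration of $Y$, contradicting the standardness of $(W,i)$.

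\emph{The inclusion $\supseteq$.} Fix an irregular fibration $g\colon X\to Y$ with $\dim X-\dim Y=i$ and $\eta$ in the asserted subset of $\Pic0(g)$; I would show that the whole coset $g^*\Pic0 Y+\eta$ lies in $V^i(K_X)$. Granting this, the Green--Lazarsfeld inequality forces any component of $V^i(K_X)$ containing the coset to have associated torus exactly $g^*\Pic0 Y$ --- a strictly larger torus would, through the Stein factorization of the corresponding map to an abelian variety, produce an irregular fibration of $X$ of relative dimension $<i$, against (b) --- so that component equals the coset and has Green--Lazarsfeld map $\mathrm{alb}_Y\circ g$; as $\dim\mathrm{alb}_Y(Y)=\dim Y$, the pair is standard. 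To see the coset lies in $V^i(K_X)$ I would reduce to $Y$ smooth (a modification plus Grauert--Riemenschneider) and compute $h^i(K_X\otimes P_{\eta'})$, for $\eta'$ in the coset, using the Koll\'ar decomposition --- in its version for the twist of $\omega_X$ by the relevant line bundle of $\Pic0(g)$, which one makes sense of by base change along the connected \'etale cover of $Y$ trivialising that line bundle along the fibres of $g$, a cover non-trivial only when $\mathrm{NS}(Y)$ has torsion: the Leray spectral sequence degenerates and $h^i(K_X\otimes P_{\eta'})$ becomes a sum of groups $H^p(Y,R^{i-p}g_*\omega_X\otimes L_{\eta'})$ with $L_{\eta'}$ numerically trivial, the top term being $H^0(Y,R^{\dim X-\dim Y}g_*\omega_X\otimes L_{\eta'})$. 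The Chen--Jiang structure of the $R^qg_*\omega_X$ then shows this combined group is non-zero for every $\eta'$ in the coset exactly in the two cases of the statement: if $\chi(K_{\widetilde Y})\neq0$, a numerically trivial twist does not change $\chi$, so already the top term has $V^0$ equal to all of $\Pic0 Y$; if $\chi(K_{\widetilde Y})=0$, the vanishing locus is a finite union of torsion translates of proper subtori accounting for the deeper irregular fibrations of $Y$, and this union --- a union of $g^*\Pic0 Y$-cosets --- reduces modulo $g^*\Pic0 Y$ to the finite group $N_g$, which collapses to $\{0\}$ precisely when $\mathrm{alb}_Y$ is surjective, the generic-vanishing obstruction on $\mathrm{Alb}\,Y$ being then concentrated at the origin.

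\emph{Main obstacle.} The real work lies in the case $\chi(K_{\widetilde Y})=0$: one must control not just the top direct image but all the terms $H^p(Y,R^qg_*\omega_X\otimes L)$ ($L$ numerically trivial) in the degenerate Leray spectral sequence, via the Chen--Jiang decompositions of the $(\mathrm{alb}_Y)_*R^qg_*\omega_X$, so as to prove that their combined non-vanishing locus is exactly a union of $g^*\Pic0 Y$-cosets, to extract the finite group $N_g$ cleanly, and to establish $N_g=\{0\}$ under the Albanese surjectivity hypothesis; I expect this, together with showing that the deeper irregular fibrations of $Y$ account for all of $V^0(K_{\widetilde Y})$ with the correct translates, to be the crux. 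A second, more routine but delicate, point is the descent bookkeeping along the \'etale covers attached to the torsion of $\mathrm{NS}(Y)$ --- i.e.\ to the finite part $\Gamma_g$ of $\Pic0(g)$ --- verifying that the irregular-fibration structure, the value of $\chi$ of the canonical bundle of a resolution, and the surjectivity of the Albanese all behave correctly, and that the cohomology computed upstairs descends, on the full coset $g^*\Pic0 Y+\eta$, to a non-zero $H^i(X,K_X\otimes P_{\eta'})$.
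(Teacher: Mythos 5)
Your overall skeleton (two inclusions, Stein factorization of the map to the torus, Koll\'ar decomposition plus Chen--Jiang decompositions over $\mathrm{Alb}\, Y$) matches the paper's setup, and the ``$\subseteq$'' bookkeeping together with the maximality argument showing that a coset $g^*\Pic0 Y+\eta$ with $\chi(R^ig_*(K_X\otimes P_\eta))>0$ is an actual standard component are fine. But the decisive step is missing. After these reductions the theorem is \emph{equivalent} to the assertion that the set $N_g$ of classes $[\eta]\in\Pic0(g)/g^*\Pic0 Y$ with $\chi(R^ig_*(K_X\otimes P_\eta))=0$ is either empty or a subgroup (so that $N_g\neq\emptyset$ iff $0\in N_g$ iff $\chi(K_{\widetilde Y})=0$), plus $N_g=\{0\}$ when $alb_Y$ is surjective. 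You flag exactly this as ``the main obstacle'' without offering an argument, and in the ``$\subseteq$'' direction you invoke ``the description of $N_g$ obtained in the other inclusion'', so the proposal is circular precisely where the work lies. The one reason you do give in the case $\chi(K_{\widetilde Y})\neq 0$ --- ``a numerically trivial twist does not change $\chi$, so already the top term has $V^0$ equal to all of $\Pic0 Y$'' --- is not valid: for $\eta$ in a nontrivial coset of $\Pic0(g)$ the bundle $P_\eta$ is \emph{not} of the form $g^*L$, so $R^ig_*(K_X\otimes P_\eta)$ is not $K_{\widetilde Y}$ twisted by a numerically trivial line bundle but a genuinely different rank-one torsion-free sheaf, whose $\chi$ can a priori drop to zero (already for surfaces fibered over a curve its degree varies with the coset, through the multiple fibers). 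Excluding such a drop when $\chi(K_{\widetilde Y})>0$ is exactly what the group property of $N_g$ provides, and it is where the paper's real argument sits: comparing the Chen--Jiang decomposition of $R^if_*(K_X\otimes P_\eta)$ with its pushforwards to the quotients $B$ dual to the extremal subtori $T\subset\Pic0 Y$ (Lemma \ref{final?}), one gets the rank identity $\deg alb_Y=H([\eta])+K([\eta])$, shows that the contribution of each $T$ to $K([\eta])$ is a constant independent of $[\eta]$ when $[\eta]$ lies in the subgroup $\Sigma_{T,g}$ (the image in $\Gamma_g$ of $\Pic0(g)\cap(\Pic0(h_B\circ g)+g^*\Pic0 Y)$, $h_B$ being the fibration of $Y$ induced by the quotient dual to $T$) and zero otherwise; hence $\chi=0$, i.e.\ $H([\eta])=0$, holds exactly on $\bigcap_T\Sigma_{T,g}$, a subgroup, and surjectivity of $alb_Y$ makes $\widehat 0$ an extremal subtorus with $\Sigma_{\widehat 0,g}=0$, whence $N_g=\{0\}$. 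Nothing in your proposal replaces this mechanism.

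A secondary issue is the descent device you propose: the finite group $\Gamma_g=\Pic0(g)/g^*\Pic0 Y$ is not governed by torsion in $\mathrm{NS}(Y)$, and no \'etale base change of $Y$ is needed to ``make sense'' of the Koll\'ar decomposition for $K_X\otimes P_{\eta'}$ --- since $\eta'$ is torsion by (c), the Koll\'ar--Takegoshi--Saito theorem applies verbatim to $K_X\otimes P_{\eta'}$, and one works directly with the sheaves $R^ig_*(K_X\otimes P_{\eta'})$ and their Chen--Jiang decompositions after pushing to $\mathrm{Alb}\, Y$. Likewise, your sentence asserting that the vanishing locus ``reduces modulo $g^*\Pic0 Y$ to the finite group $N_g$'' presupposes rather than proves that this finite set is a group: the coset-invariance is indeed immediate from deformation invariance of $\chi$, but the group structure is the theorem itself.
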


 It is perhaps worth to mention that Theorem \ref{1} can be also restated as the following Corollary. Given an irregular fibration $g:X\rightarrow Y$, let $i=\dim X-\dim Y$. By Koll\'ar's vanishing theorem (see Theorem \ref{decomp} below) $\Pic0 (g)$ is precisely the locus of  $\eta\in \Pic0 X$ such that $R^ig_*(K_X\otimes P_\eta)$ is non-zero (in fact a torsion-free sheaf of rank one). By deformation-invariance of holomorphic Euler characteristic $\chi(R^ig_*(K_X\otimes P_\eta))$ depends only on $[\eta]\in \Pic0 (g)/g^*\Pic0 Y$. By generic vanishing (see Theorem \ref{vanishing} below) it is always non-negative.

\begin{corollaryalpha}\label{2mezzo}  In the notation above, the set of $[\eta]\in\Pic0 (g)/g^*\Pic0 Y:=\Gamma_g$ such that \break $\chi(R^ig_*(K_X\otimes P_\eta))=0$ is either empty or a subgroup of $\Gamma_g$. The latter case holds if and only if  $\chi(K_{\widetilde Y})=0$, where $\widetilde Y$ is any desingularization of $Y$. If this is the case and, in addition, the Albanese map of $Y$ is surjective then such subgroup is zero.
\end{corollaryalpha}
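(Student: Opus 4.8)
The plan is to derive the corollary from Theorem~\ref{1} by converting the statement about the loci $V^i(K_X)$ into one about Euler characteristics of higher direct images. Fix the irregular fibration $g\colon X\to Y$, put $i=\dim X-\dim Y$, and for $\eta\in\Pic0(g)$ set $\F_\eta:=R^ig_*(K_X\otimes P_\eta)$; by Kollár's vanishing theorem (Theorem~\ref{decomp}), $\F_\eta$ is a torsion-free sheaf of rank one on $Y$, and writing $\eta=g^*\mu_0+\epsilon$ with $\epsilon$ a torsion point representing $[\eta]\in\Gamma_g$ one has $\F_\eta=\F_\epsilon\otimes P_{\mu_0}$, so that $\chi(\F_\eta)$ depends only on $[\eta]$. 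The main step is the dictionary
\[
\chi(\F_\eta)>0\qquad\Longleftrightarrow\qquad g^*\Pic0 Y+\eta\subseteq V^i(K_X).
\]
To prove it I would use two inputs. First, Kollár's decomposition theorem (Theorem~\ref{decomp}) gives that the Leray spectral sequence of $K_X\otimes P_\eta$ for $g$ degenerates at $E_2$ — for $\eta\in g^*\Pic0 Y$ this is his theorem, and for general $\eta\in\Pic0(g)$ one reduces to it by passing to the connected étale cyclic cover of $X$ trivializing $\epsilon$ and taking the appropriate direct summand of its push-forward. Second, since $Y$ has maximal Albanese dimension, for each $q$ the higher direct images of $R^qg_*(K_X\otimes P_\eta)$ under the Albanese map of $Y$ vanish and its push-forward to $\mathrm{Alb}\,Y$ satisfies generic vanishing — both consequences of Kollár's torsion-freeness together with the generic vanishing theorem of Hacon and Pareschi--Popa (Theorem~\ref{vanishing}). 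Hence, for generic $\mu\in\Pic0 Y$ every term $H^p\!\bigl(Y,R^qg_*(K_X\otimes P_\eta)\otimes P_\mu\bigr)$ with $p>0$ vanishes, and the degenerate spectral sequence gives
\[
h^i(X,K_X\otimes P_\eta\otimes g^*P_\mu)=h^0(Y,\F_\eta\otimes P_\mu)=\chi(\F_\eta);
\]
in particular $\chi(\F_\eta)\ge 0$, and the dictionary follows because $V^i(K_X)$ is Zariski-closed (item (a)), so it contains the irreducible translate $g^*\Pic0 Y+\eta$ if and only if it contains its generic point.

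Put $S_g:=\{[\eta]\in\Gamma_g:\chi(\F_\eta)=0\}$, so that by the dictionary $S_g=\{[\eta]\in\Gamma_g : g^*\Pic0 Y+\eta\not\subseteq V^i(K_X)\}$. I would then observe that a translate $g^*\Pic0 Y+\eta$ contained in $V^i(K_X)$ lies automatically in the \emph{standard} part: if $W$ is an irreducible component of $V^i(K_X)$ containing it, then $g^*\Pic0 Y$ is contained in the subtorus $T_W$ attached to $W$ by item (a), so dualizing this inclusion and using that $Y$ has maximal Albanese dimension one gets $\dim f_W(X)\ge\dim Y$, whence (\ref{fundamental2}) forces $\dim X-\dim f_W(X)=i$ and $(W,i)$ is standard. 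Thus $S_g$ is exactly the set of $[\eta]\in\Gamma_g$ for which the coset $g^*\Pic0 Y+\eta$ does not lie in the standard part of $V^i(K_X)$. Comparing Theorem~\ref{1} with $\Pic0(g)$ — and checking that a coset of $g^*\Pic0 Y$ inside the standard part is forced, by the form of Theorem~\ref{1} and the inclusions of subtori it imposes, to lie in the piece contributed by $g$ itself — one reads off the three assertions: if $\chi(K_{\widetilde Y})>0$ the whole of $\Pic0(g)$ is in the standard part and $S_g=\varnothing$; if $\chi(K_{\widetilde Y})=0$ the part of $\Pic0(g)$ in the standard part is $\Pic0(g)\smallsetminus(g^*\Pic0 Y+N_g)$ with $N_g$ a finite subgroup, so $S_g$ is the image of $N_g$ in $\Gamma_g$, a subgroup; and in this last case, if moreover the Albanese map of $Y$ is surjective, $N_g=\{0\}$.

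It remains to match the dichotomy, i.e.\ to see that $S_g=\varnothing$ precisely when $\chi(K_{\widetilde Y})>0$; one implication was just noted, and for the converse it is enough to prove
\[
\chi(\F_0)=\chi(R^ig_*K_X)=\chi(K_{\widetilde Y}),
\]
since then $\chi(K_{\widetilde Y})=0$ forces $[0]\in S_g$, so $S_g$ is a nonempty subgroup. To prove the identity, choose a smooth Kähler variety $\widetilde X$ with a birational morphism $\tau\colon\widetilde X\to X$ and a morphism $h\colon\widetilde X\to\widetilde Y$ lifting $g$ (for instance a resolution of the main component of $X\times_Y\widetilde Y$), and let $\sigma\colon\widetilde Y\to Y$ be the given desingularization, so that $\sigma h=g\tau$. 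Then $Rg_*K_X=R\sigma_*Rh_*K_{\widetilde X}$ (because $R\tau_*K_{\widetilde X}=K_X$); Kollár's decomposition gives $Rh_*K_{\widetilde X}=\bigoplus_j R^jh_*K_{\widetilde X}[-j]$ with $R^ih_*K_{\widetilde X}=K_{\widetilde Y}$, and Grauert--Riemenschneider gives $R\sigma_*K_{\widetilde Y}=\sigma_*K_{\widetilde Y}$. Taking $i$-th cohomology sheaves, $R^ig_*K_X$ is the direct sum of $\sigma_*K_{\widetilde Y}$ with sheaves $R^{i-j}\sigma_*R^jh_*K_{\widetilde X}$ ($j<i$) supported on the $\sigma$-exceptional locus; as $R^ig_*K_X$ is torsion-free these extra summands vanish, so $R^ig_*K_X=\sigma_*K_{\widetilde Y}$ and $\chi(R^ig_*K_X)=\chi(\sigma_*K_{\widetilde Y})=\chi(K_{\widetilde Y})$. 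The step I expect to be the genuine obstacle is the dictionary of the first paragraph — establishing the degeneration of the twisted Leray spectral sequence and the generic-vanishing properties uniformly for all $\eta\in\Pic0(g)$ — together with the verification in the second paragraph that, inside $\Pic0(g)$, the standard part of $V^i(K_X)$ is accounted for by the fibration $g$ alone.
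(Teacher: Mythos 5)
Your route is genuinely different from the paper's: there, Corollary \ref{2mezzo} is not deduced from the statement of Theorem \ref{1} but is identified with Claim \ref{group}, which is the engine of the proof of Theorem \ref{1} and is proved directly via the Chen--Jiang decomposition of $R^if_*(K_X\otimes P_\eta)$ (extremal subtori, the subgroups $\Sigma_{T,g}$, and the rank count $\deg alb_Y=H([\eta])+K([\eta])$). You instead treat Theorem \ref{1} as a black box. Much of your argument is sound and nicely done: the ``dictionary'' $\chi(R^ig_*(K_X\otimes P_\eta))>0\Leftrightarrow g^*\Pic0 Y+\eta\subseteq V^i(K_X)$ is essentially Lemma \ref{pic} (Koll\'ar splitting plus generic vanishing, with a torsion representative of $[\eta]$); the observation that any component of $V^i(K_X)$ containing such a coset is automatically standard is correct; and your identity $R^ig_*K_X=\sigma_*K_{\widetilde Y}$, hence $\chi(R^ig_*K_X)=\chi(K_{\widetilde Y})$, is a clean substitute for the paper's bimeromorphic replacement of $g$ by a fibration over $\widetilde Y$. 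This already yields the dichotomy ($S_g=\emptyset$ if and only if $\chi(K_{\widetilde Y})>0$) and, via the last sentence of Theorem \ref{1}, the assertion $S_g=\{0\}$ when moreover $alb_Y$ is surjective.

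The genuine gap is at the step you only gesture at: the claim that a coset of $g^*\Pic0 Y$ inside the standard part is forced to lie in ``the piece contributed by $g$ itself'', hence that $S_g$ is the image of $N_g$ in $\Gamma_g$. Theorem \ref{1} presents each $N_{g'}$ only as \emph{some} finite subgroup and describes the standard part as a union over \emph{all} fibrations of relative dimension $i$; it therefore gives the inclusion of $S_g$ in the image of $N_g$, but not the reverse, and the subgroup property of $S_g$ --- the actual content of the corollary beyond the dichotomy --- is exactly what is not yet justified. What your subtorus argument really forces is weaker: if $g^*\Pic0 Y+\eta$ lies in a coset belonging to the piece of some $g'\colon X\to Y'$ with $\dim X-\dim Y'=i$, then $g^*\Pic0 Y\subseteq g'^*\Pic0 Y'$, and since both bases have maximal Albanese dimension and both fibrations have $i$-dimensional general fibers, the general fibers coincide, so $g'$ is \emph{equivalent} to $g$ (same $\Pic0(g')=\Pic0(g)$, same identity component, bimeromorphic bases, same $\chi(K_{\widetilde Y})$) --- but a priori with a different subgroup $N_{g'}$. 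To close the argument you must either check that $\chi$ of the relevant direct image is the same for equivalent fibrations (push both forward to the torus dual to $g^*\Pic0 Y=g'^*\Pic0 Y'$), or observe that your analysis gives $S_g$ as the intersection, over all fibrations equivalent to $g$, of the images of the $N_{g'}$, which is an intersection of subgroups and hence a subgroup; you also need the small but necessary point that a coset contained in the standard part lies in a single piece-coset (argue through the finitely many irreducible components of $V^i(K_X)$ and Lemma \ref{pic}, not through irreducibility against a possibly infinite union of pieces). With these additions your derivation works; as written, the decisive verification is asserted rather than proved, and it is precisely where the paper instead invokes the Chen--Jiang machinery.
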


In the following particular case, we obtain a more precise description, recovering and extending a well known result of Beauville (\cite{beauville}, Cor.2.3).

\begin{theoremalpha}\label{2} Let $p$ be the maximal index such that $V^p(K_X)$ is positive-dimensional \footnote{if $V^i(K_X)$ is empty or zero-dimensional for all $i$ we define $p=-\infty$}\\
 (a) If $g:X\rightarrow Y$ is an irregular fibration with $\dim X-\dim Y>p$ then
 $Y$ is bimeromorphic to a complex torus and $\Pic0(g)=g^*\Pic0 Y$. \\
 (b)  The positive-dimensional part of $V^{p}(K_X)$ is the union, for all irregular fibrations $g:X\rightarrow Y$ with  $\dim X-\dim Y=p$, of:\\
- the subvarieties $\Pic0(g)\smallsetminus g^*\Pic0(Y)$  if  $Y$ is  bimeromorphic to a complex torus,\\
-  the subgroups $\Pic0(g)$ otherwise.
\end{theoremalpha}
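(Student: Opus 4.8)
The plan is to deduce Theorem~\ref{2} from Theorem~\ref{1} by analyzing what happens when $\chi(K_{\widetilde Y})>0$ versus $\chi(K_{\widetilde Y})=0$, and by exploiting the maximality of the index $p$. First I would recall the two governing facts: the standard part of $V^i(K_X)$ is described by Theorem~\ref{1}, and, by (b) of the introduction, \emph{every} irreducible component $W$ of $V^i(K_X)$ satisfies $\dim X-\dim f(X)\ge i$, where $f\colon X\to B$ is the associated fibration through $\Pic0 T$ for $T$ the subtorus of which $W$ is a translate. The key observation is that for the top positive-dimensional index $p$, any component $W$ of $V^p(K_X)$ must in fact be standard: if $\dim X - \dim f(X) > p$ for a positive-dimensional component $W$, one could produce (via Koll\'ar-type decomposition and the structure of higher direct images, Theorems~\ref{decomp} and~\ref{vanishing}) a positive-dimensional component of $V^{i}(K_X)$ with $i = \dim X - \dim f(X) > p$, contradicting maximality of $p$. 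So the positive-dimensional part of $V^p(K_X)$ equals its standard part, and Theorem~\ref{1} applies directly with $i=p$.

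Next I would prove part (a). Let $g\colon X\to Y$ be an irregular fibration with $\dim X - \dim Y = i > p$. By Koll\'ar vanishing (Theorem~\ref{decomp}), $R^i g_*(K_X\otimes P_\eta)\neq 0$ exactly for $\eta\in\Pic0(g)$, and the Leray spectral sequence together with the fact that $Y$ has maximal Albanese dimension forces $H^i(K_X\otimes P_\eta)\neq 0$ at least for $\eta$ in a translate of $g^*\Pic0 Y$ inside $\Pic0(g)$ — more precisely, pushing forward to $\widetilde Y$ and using generic vanishing on $\widetilde Y$, one sees $h^i(K_X\otimes P_\eta) = h^0(K_{\widetilde Y}\otimes (\text{rank-one piece}) \otimes P_{\eta'})$ for generic $\eta'$. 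If this is positive on a positive-dimensional set, that set lies in $V^i(K_X)$ with $i>p$, contradicting the definition of $p$ unless the relevant locus is empty or zero-dimensional; combined with the structure, this forces $\chi(K_{\widetilde Y}) = 0$ and the corresponding component to be zero-dimensional, which (by generic vanishing on the Kähler resolution $\widetilde Y$, as in the footnote to Theorem~\ref{1}, and the characterization of varieties with $\chi(K_{\widetilde Y})=0$ that are of maximal Albanese dimension — Ein–Lazarsfeld / Chen–Hacon / Pareschi–Popa type results, which give that $\widetilde Y$, being of maximal Albanese dimension with vanishing Euler characteristic, is bimeromorphic to a complex torus) shows $Y$ is bimeromorphic to a complex torus. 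Once $Y$ is bimeromorphic to a torus, the Albanese map of $Y$ is (bimeromorphic to) an isomorphism, hence surjective, so the second clause of Theorem~\ref{1} gives $N_g = \{0\}$; but also $\Pic0(g)\smallsetminus g^*\Pic0 Y$ would then be a union of \emph{positive-dimensional} components of $V^i(K_X)$ with $i>p$ unless it is empty, i.e.\ unless $\Gamma_g$ is trivial, that is $\Pic0(g) = g^*\Pic0 Y$. This is exactly (a).

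For part (b), I would apply Theorem~\ref{1} with $i = p$ directly: the positive-dimensional part of $V^p(K_X)$ is the union over irregular fibrations $g$ with $\dim X - \dim Y = p$ of the pieces described there. When $\chi(K_{\widetilde Y})>0$ the piece is the subgroup $\Pic0(g)$; this matches the stated ``$\Pic0(g)$ otherwise'' provided one checks that $\chi(K_{\widetilde Y})>0$ is equivalent to ``$Y$ not bimeromorphic to a complex torus'' among $Y$ of maximal Albanese dimension. That equivalence is precisely the characterization of complex tori among maximal-Albanese-dimension Kähler manifolds by vanishing holomorphic Euler characteristic (one direction is trivial since $\chi(K_{\text{torus}})=0$; the other is the structural theorem referenced above, valid in the Kähler setting via \cite{pps} and \cite{wang}). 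When $\chi(K_{\widetilde Y}) = 0$, i.e.\ $Y$ bimeromorphic to a torus, the Albanese of $Y$ is surjective, so Theorem~\ref{1} gives $N_g = \{0\}$ and the piece is $\Pic0(g)\smallsetminus g^*\Pic0 Y$, as claimed. Finally I would note that in case (b), first clause, the ``subtraction'' is genuinely necessary: by part (a) applied with strict inequality, and by the fact that on $g^*\Pic0 Y$ itself the sheaf $R^p g_*(K_X\otimes P_\eta)$ pulled up from the torus $Y$ contributes to $V^j$ for $j>p$ (since a torus has $\dim X - \dim Y' $ large under further fibrations), the locus $g^*\Pic0 Y$ is not in the positive-dimensional part of $V^p$ — or rather, it is absorbed into higher loci — so only the complement survives at level exactly $p$.

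The main obstacle I expect is the deduction, in part (a), that $Y$ is actually bimeromorphic to a complex torus rather than merely having $\chi(K_{\widetilde Y}) = 0$: this requires invoking the structure theorem for compact Kähler manifolds of maximal Albanese dimension with vanishing Euler characteristic (the Kähler analogue of the Chen–Hacon/Ein–Lazarsfeld characterization of abelian varieties), and verifying it applies to the normal space $Y$ through its resolution $\widetilde Y$. A secondary technical point is making the reduction ``positive-dimensional part of $V^p$ is standard'' fully rigorous — one must rule out, using maximality of $p$ and the higher-direct-image formalism of Theorem~\ref{decomp}, that a non-standard positive-dimensional component of $V^p(K_X)$ could exist; this is where the inequality \eqref{fundamental2} and the behaviour of $\chi$ under the Leray spectral sequence do the work, but the bookkeeping of which index the resulting positive-dimensional locus lands in needs care.
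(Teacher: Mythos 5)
Your overall strategy (deduce Theorem \ref{2} from Theorem \ref{1}) is not circular, since the paper proves Theorem \ref{1} in \S 7 without using Theorem \ref{2}; but note that the paper deliberately proves Theorem \ref{2} first and directly, and the extra precision of Theorem \ref{2} is genuinely not contained in Theorem \ref{1} — it is exactly what the maximality of $p$ buys. Your bridge between the two statements rests on the claim that, for $Y$ (or $\widetilde Y$) of maximal Albanese dimension, $\chi(K_{\widetilde Y})=0$ is equivalent to $\widetilde Y$ being bimeromorphic to a complex torus. This is false: for $Y=C\times E$ with $C$ of genus $2$ and $E$ elliptic one has maximal Albanese dimension and $\chi(K_Y)=0$, yet $Y$ is not bimeromorphic to a torus. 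The correct statement, and the one the paper actually uses, is that $Y$ is bimeromorphic to a torus when $V^0(K_{\widetilde Y})$ is \emph{zero-dimensional} and $q(Y)=\dim Y$; both of these facts have to be extracted from the maximality of $p$ (a positive-dimensional component of $V^0(R^ig_*(K_X\otimes P_\eta))$ of codimension $j>0$ is, by Corollary \ref{el}, an extremal component, hence by Koll\'ar splitting it propagates to a positive-dimensional component of $V^{i+j}(K_X)$ with $i+j>p$; and the isolated point of $V^0$ propagates to $V^{i+q(Y)}(K_X)\neq\emptyset$, forcing $q(Y)=\dim Y$ and, via $V^{\dim X}(K_X)=\{\hat 0\}$, also $\eta\in g^*\Pic0 Y$). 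Because your equivalence is wrong, your part (b) has a real gap: for a fibration with $\dim X-\dim Y=p$, $\chi(K_{\widetilde Y})=0$ but $Y$ not bimeromorphic to a torus, Theorem \ref{1} only yields the piece $\Pic0(g)\smallsetminus(g^*\Pic0 Y+N_g)$, whereas Theorem \ref{2}(b) asserts the piece is all of $\Pic0(g)$; reconciling the two dichotomies is precisely the content of the paper's direct argument and does not follow from Theorem \ref{1} by itself. The same defect undermines your route to ``$Y$ bimeromorphic to a torus'' in part (a), and with it the step $N_g=\{0\}$ (which you obtain from Albanese surjectivity of $Y$, itself a consequence of the torus conclusion you have not established).

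A second, smaller gap is the reduction ``positive-dimensional part of $V^p$ equals its standard part.'' You claim that a positive-dimensional component associated to a fibration with $\dim X-\dim f(X)=m>p$ would immediately produce a positive-dimensional component of $V^m(K_X)$, contradicting maximality. That production is not automatic: $V^0(R^mg_*(K_X\otimes P_\eta))$ is nonempty (non-vanishing for nonzero GV sheaves) but may well be zero-dimensional, in which case no positive-dimensional locus at level $m$ appears and there is no contradiction at that stage. This zero-dimensional case is exactly where the work lies; the paper handles it in the proof of (a) by the Ein--Lazarsfeld/Koll\'ar propagation just described, concluding $\eta\in g^*\Pic0 Y$, $q(Y)=\dim Y$, and then torus-ness of $Y$ via the Chen--Jiang decomposition of ${alb_Y}_*K_{\widetilde Y}$ (all tori $B_k$ are zero-dimensional, and $V^{q(Y)}(K_{\widetilde Y})=\{\hat 0\}$ forces ${alb_Y}_*K_{\widetilde Y}=\OO_{\mathrm{Alb}\,Y}$), and only then feeds (a) into the proof of (b). You flag the bookkeeping as delicate, but as written the proposal does not supply the argument that closes it, and supplying it essentially amounts to reproducing the paper's direct proof rather than quoting Theorem \ref{1}.
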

Beauville's aforementioned result  is  statement  (b)  for $V^{\dim X-1}(K_X)$ (note that 
 $p=\dim X-1$ if $V^{\dim X-1}(K_X)$ is positive-dimensional). In this case the positive-dimensional components are induced by fibrations onto smooth curves of genus $\ge 1$. Even in this case our proof is different from Beauville's.
There some questions  naturally connected to the above results:

\noindent (a) 
 It would be interesting to give a geometric description for the groups $\Pic0(g)$ associated to irregular fibrations (and of the subgroups $N_g$ appearing in the statement of Theorem \ref{1}). When the base $Y$ is a curve, $\Pic0(g)$ is completely described in terms of the multiple fibers of $g$ (Beauville \cite{beauville}). \\
 (b) Topological invariance of irregular fibrations of compact Kahler manifolds  and their number (up to equivalence), as well as their relation with the fundamental group. This matter is well understood when the basis of the fibration
is a curve of genus $\ge 2$ by work of Siu, Beauville and Arapura (see \cite{catanese1}, \cite{beauville}, \cite{arapura}, \cite{simpson}). In Beauville and Arapura's treatment the main ingredient is the above mentioned result of Beauville. In view of Theorem \ref{2}(b) it is natural to ask for similar results  for irregular fibrations over normal analytic spaces $X\rightarrow Y$ of arbitrary dimension, at least when $\chi(K_{\widetilde Y})>0$. Interestingly,   Catanese (\cite{catanese1}) proved -- with a different approach -- the topological invariance of the existence of irregular fibrations $X\rightarrow Y$ such that the Albanese map of $Y$ is non-surjective  (note that when $Y$ is a smooth curve the condition $\chi(K_Y)>0$, i.e. $g(Y)\ge 2$, is equivalent to the non-surjectivity of the Albanese map of $Y$, but this is not anymore  true in higher dimension). \\
 (c) A conjecture of M. Popa predicts that all loci $V^i(K_X)$ are derived-invariants.  In view of Theorem \ref{2}         this would imply  that the integer $p$ is a derived invariant, as well as all (equivalence classes of)  irregular fibrations $g:X\rightarrow Y$ such that $\dim X-\dim Y=p$    (except those such that $Y$ is bimeromorphic to a complex torus and $\Pic0(g)=g^*\Pic0 Y$). Again, this is (partly) known when the base $Y$ is a smooth curve (\cite{popa}, \cite{popa2}). \\
(d) It would be interesting to find a larger part of canonical cohomological suppport loci admitting a description similar to Theorem \ref{1}.

Another consequence of Theorem \ref{1} is

\begin{corollaryalpha}\label{3}  Assume that  $X$ has maximal Albanese dimension (that is: $\dim alb_X(X)=\dim X$) and that $V^0(K_X)$ is a proper subvariety of $\Pic0 X$. Then
$V^0(K_X)$ is the union, 
for all irregular fibrations $g:X\rightarrow Y$ such that $\dim X-\dim Y=\dim \mathrm{Alb}\, X-\dim \mathrm{Alb}\, Y$, of:\\
- the subvarieties $\Pic0 (g)\smallsetminus(g^*\Pic0 (Y) +N_g)$, where $N_g$ is a finite subgroup of $\Pic0(g)$, if $\chi(K_{\widetilde Y})=0$ (where $\widetilde Y$ is any desingularization of $Y$). If, in addition, the Albanese morphism of $Y$ is surjective then $N_g=\{0\}$.\\ 
 - the subgroups $\Pic0 (g)$ otherwise. 
\end{corollaryalpha}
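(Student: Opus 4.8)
The plan is to derive Corollary \ref{3} from Theorem \ref{1} by checking that, under the maximal Albanese dimension hypothesis, the standard pairs $(W,i)$ with $W$ a component of $V^0(K_X)$ are exactly those whose defining fibration $g:X\rightarrow Y$ satisfies $\dim X-\dim Y=\dim\mathrm{Alb}\,X-\dim\mathrm{Alb}\,Y$, and that all of these fibrations are $1$-dimensional-fiber-or-larger in a way compatible with $i=0$ being forced. Concretely, first I would observe that when $X$ has maximal Albanese dimension, property (b) from the introduction (the Green--Lazarsfeld inequality \eqref{fundamental2}) shows that for a component $W=f^*\Pic0 B+\eta$ of $V^0(K_X)$ the associated map $f:X\rightarrow B$ has $\dim X-\dim f(X)\ge 0$ trivially, so the content is to identify when $(W,0)$ is \emph{standard}, i.e. when $\dim f(X)=\dim X$. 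Since $\mathrm{alb}_X$ is generically finite onto its image, $\dim f(X)=\dim X$ iff the composite $X\to \mathrm{Alb}\,X\to B$ is generically finite onto its image, i.e. iff $\dim B=\dim\mathrm{Alb}\,X$; but a component $W$ of $V^0(K_X)$ of positive dimension pulls back from $B=\Pic0 T$ with $T$ a proper subtorus, so one must instead Stein-factorize $f$ through the normal analytic space $Y$ it fibers onto and argue that $\dim X-\dim Y=\dim\mathrm{Alb}\,X-\dim\mathrm{Alb}\,Y$ is precisely the standardness condition $\dim X-\dim Y=0$ rephrased — this is where the maximal Albanese dimension of $X$ is used, via the fact that maximal Albanese dimension is inherited by the image $Y$ of an irregular fibration and that the fibers of $alb_X$ and of $g$ then have matching dimensions.

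The second step is to invoke Theorem \ref{1} with $i=0$: the standard part of $V^0(K_X)$ is the union over irregular fibrations $g$ with $\dim X-\dim Y=0$ of the pieces $\Pic0(g)\smallsetminus(g^*\Pic0 Y+N_g)$ when $\chi(K_{\widetilde Y})=0$ (with $N_g=\{0\}$ if $\mathrm{Alb}\,Y$ surjective) and of $\Pic0(g)$ otherwise. The remaining point is to show that under the standing hypotheses of Corollary \ref{3} — maximal Albanese dimension of $X$ and $V^0(K_X)\subsetneq \Pic0 X$ — the \emph{entire} $V^0(K_X)$, not merely its standard part, is accounted for. This follows because, by the Green--Lazarsfeld inequality, every component $W$ of $V^0(K_X)$ satisfies $\dim X-\dim f(X)\ge 0$, and by maximal Albanese dimension of $X$ the map $f$ (after Stein factorization) fibers $X$ over a $Y$ of maximal Albanese dimension with $\dim X-\dim Y\ge 0$; the properness of $V^0(K_X)$ in $\Pic0 X$ rules out the trivial fibration $Y=\mathrm{Alb}\,X$-image (which would force $W=\Pic0 X$), so each $W$ genuinely comes from an irregular fibration, and the standardness $i=\dim X-\dim Y=0$ is automatic here once one knows — from a dimension count using that both $X$ and $Y$ have maximal Albanese dimension — that $\dim X-\dim Y=\dim\mathrm{Alb}\,X-\dim\mathrm{Alb}\,Y$.

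The third step is bookkeeping: translating the condition "$\dim X-\dim f(X)=i$ with $i=0$" into the stated condition "$\dim X-\dim Y=\dim\mathrm{Alb}\,X-\dim\mathrm{Alb}\,Y$". Since $\mathrm{alb}_X$ is generically finite, $\dim\mathrm{Alb}\,X\ge\dim X$ is impossible unless equality; wait — rather, $\dim alb_X(X)=\dim X$ means the fibers of $alb_X$ are generically $0$-dimensional, and the fiber of $f:X\to B$ over a general point is the preimage of a fiber of $\pi:\mathrm{Alb}\,X\to B$, whose dimension is $\dim\mathrm{Alb}\,X-\dim B$ plus the generic fiber dimension of $alb_X$ (which is $0$); after Stein factorization $X\to Y\to B$ with $Y\to B$ generically finite, $\dim Y=\dim B=\dim\mathrm{Alb}\,Y$ (the last because $Y$ has maximal Albanese dimension and its Albanese maps to $B$), so $\dim X-\dim Y = \dim X-\dim\mathrm{Alb}\,Y$, and this equals $0$ precisely when $\dim X=\dim\mathrm{Alb}\,Y$, i.e. (using $\dim X=\dim\mathrm{Alb}\,X$ which is \emph{not} generally true — here is a subtlety: $\dim X=\dim alb_X(X)$, not $\dim X=\dim\mathrm{Alb}\,X$). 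The clean reformulation is that the standardness condition $\dim X-\dim Y=0$ for these fibrations coincides with $\dim X-\dim Y=\dim alb_X(X)-\dim Y$, and since $Y$ has maximal Albanese dimension $\dim Y=\dim\mathrm{Alb}\,Y$, this reads $\dim X-\dim Y=\dim\mathrm{Alb}\,X-\dim\mathrm{Alb}\,Y$ after also using $\dim alb_X(X)=\dim\mathrm{Alb}\,X$ — but the latter holds only if $alb_X$ is surjective, which need not be assumed, so the correct bound is an inequality $\dim\mathrm{Alb}\,X-\dim\mathrm{Alb}\,Y\le \dim X-\dim Y$ with the standard part being where it is an equality.

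I expect the main obstacle to be precisely this dimension-count reconciliation: verifying cleanly that for a component $W$ of $V^0(K_X)$ with associated irregular fibration $g:X\rightarrow Y$, the condition that $(W,0)$ is a standard pair is equivalent to $\dim X-\dim Y=\dim\mathrm{Alb}\,X-\dim\mathrm{Alb}\,Y$ under the maximal Albanese dimension hypothesis on $X$. This requires knowing that $Y$ inherits maximal Albanese dimension, that the generic fiber of $g$ is then also of maximal Albanese dimension, and a careful comparison of the subtorus $T=f^*\Pic0 B$ arising in item (a) with $\Pic0(g)$ and $g^*\Pic0 Y$ — in particular that the Stein factorization of $f:X\to B$ produces exactly the normal analytic space $Y$ appearing in Theorem \ref{1} and that $\dim B=\dim\mathrm{Alb}\,Y$. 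Once these identifications are in place, Corollary \ref{3} is just Theorem \ref{1} specialized to $i=0$ with the index-$0$ standardness condition rewritten; the hypothesis $V^0(K_X)\subsetneq\Pic0 X$ serves only to exclude the trivial irregular fibration $X\to\mathrm{pt}$-type degenerate case and to ensure every component genuinely arises from a nontrivial $g$.
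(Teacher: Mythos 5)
Your plan (deduce Corollary \ref{3} from Theorem \ref{1}) is the right general idea, but you apply Theorem \ref{1} in the wrong degree, and the ``dimension-count reconciliation'' you flag as the main obstacle is not a technical wrinkle to be smoothed out: the identification you are attempting is false. You read the condition $\dim X-\dim Y=\dim \mathrm{Alb}\, X-\dim \mathrm{Alb}\, Y$ in the statement as a rephrasing of standardness in degree $0$, i.e.\ of $\dim X-\dim Y=0$. It is not. The fibrations occurring in Corollary \ref{3} typically have positive-dimensional fibres: for instance, a surface $X$ of maximal Albanese dimension with $q(X)=3$ and $\chi(K_X)=0$, fibred over a curve $C$ of genus $2$, has the component $W=g^*\Pic0 C+\eta$ of $V^0(K_X)$ with $\dim X-\dim C=1=q(X)-q(C)$; here $(W,0)$ is \emph{not} a standard pair (the associated map $f$ has $1$-dimensional image), while $(W,1)$ is. In fact, under the corollary's hypotheses there are no standard pairs $(W,0)$ at all: such a pair would force $\chi(g_*(K_X\otimes P_\eta))=\chi(K_X\otimes P_\eta)=\chi(K_X)>0$ with $g$ generically finite, hence $V^0(K_X)=\Pic0 X$ since $K_X$ is GV, contradicting properness. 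So ``Theorem \ref{1} with $i=0$'' says nothing about $V^0(K_X)$ here, and your third step cannot be repaired because the asserted equivalence fails.

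What is missing are the two implications that make the statement work, and they both use the properness hypothesis in an essential way (it is not merely ``excluding a degenerate fibration''). First, maximal Albanese dimension gives $R^j{alb_X}_*K_X=0$ for $j>0$, so $V^i(X,K_X)=V^i(\mathrm{Alb}\, X,{alb_X}_*K_X)$ and $K_X$ is GV; then, since $V^0(K_X)\subsetneq \Pic0 X$ (equivalently $\chi({alb_X}_*K_X)=0$), Corollary \ref{el}(a) (Ein--Lazarsfeld) shows that every codimension-$j$ component of $V^0(K_X)$ is an extremal component, i.e.\ a codimension-$j$ component of $V^j(K_X)$, and Lemma \ref{pic2}(b) identifies the fibrations producing extremal components as exactly those with $\dim X-\dim Y=q(X)-q(Y)$; for these the pair $(W,j)$ is standard. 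Second, and conversely, the same hypothesis $\chi=0$ means the M-regular summand of the Chen--Jiang decomposition of ${alb_X}_*K_X$ is absent, so by Remark \ref{important} (via Example \ref{pullback}) every extremal, hence every standard, component is also a component of $V^0(K_X)$. Only after these two steps does Theorem \ref{1}, applied in each degree $j>0$ rather than in degree $0$, give the stated description of $V^0(K_X)$. None of this machinery (Ein--Lazarsfeld, the Chen--Jiang decomposition of ${alb_X}_*K_X$, Lemma \ref{pic2}(b)) appears in your argument, so as written the proposal has a genuine gap at its central step.
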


A weaker statement along these lines was proved in \cite{pareschi}, 4.3 . Corollary \ref{3} is also a strengthening of a result of  \cite{chen-jiang}, (Th. 3.5) (in turn generalized in \cite{pps} Cor. 16.2), asserting that   $V^0(K_X)$ is invariant with respect to the natural involution of $\Pic0 X$. 
Note that from Theorems \ref{1} and \ref{2}, and  Corollary \ref{3} it follows that the loci in question are \emph{complete} in the above sense.

All proofs are based on Hacon's generic vanishing theorem for higher direct images, often combined with Koll\'ar's decomposition for the derived direct image of the canonical sheaf.  A key tool for the proof of Theorem \ref{1} is a sharper version of Hacon's theorem introduced by J. A. Chen and Z. Jiang. We will be refer  to that as the Chen-Jiang decomposition.  Hacon's and Chen-Jiang's theorem was extended    to  the compact  K\"ahler setting and  to higher direct images in \cite{pps}. 

Results as the above are  useful in applications concerning the geometry and  the classification of irregular compact K\"ahler manifolds. In what follows we will denote $p_i(K_X)=h^0(K_X^i)$ the plurigenera of a compact K\"ahler manifold $X$.  For example, already the aforementioned invariance of $V^0(K_X)$ under the natural involution of $\Pic0 X$  was a key point in the proof that complex tori are classified by their irregularity and first two plurigenera (\cite{pps}, Th.B) (this was  a theorem of Chen-Hacon in the algebraic case (\cite{chen-hacon1}). 
In the last section, which is somewhat independent, we provide a related  application. In fact, after  complex tori, it is natural to aim at the classification of compact K\"ahler irregular manifolds with  low plurigenera. In the projective case  this has been pursued by various authors, see  \cite{chen-hacon2}, \cite{chen-hacon3}. \cite{hacon2}, \cite{hacon-pardini}, \cite{jiang}. Still under the conditions $q(X)=\dim X$ and $p_1(X)\ne 0$, it turns out that the next   lowest  condition on plurigenera is $p_3(X)=2$. 
We confirm the classification of such varieties in the projective case, due to Hacon-Pardini (\cite{hacon-pardini} Th.4)\footnote{strictly speaking  the proof of this application uses only the well known theorem of Beauville mentioned above, which is now a particular case of Theorem \ref{2}. However we included it in this paper because it is suggestive about the possible use of Theorems \ref{1} and \ref{2} when dealing with this sort of problems}.

\begin{theoremalpha}\label{4} Let $X$ be a compact Kahler manifold with $q(X)=\dim X$, $p_1(X)\ne 0$ and  $p_3(X)=2$. Then $\mathrm{Alb}\, X$ has a quotient  (with connected fibers) $\pi: X\rightarrow E$ with  $E$ elliptic curve, and $X$ is bimeromorphic to the
ramified double cover of $a:X\rightarrow \mathrm{Alb}\, \, X$ such that 
\[a_*K_X= \OO_{\mathrm{Alb}\, X}\oplus (\pi^*\OO_E(p)\otimes P_\eta),\] where $p$ is a point of $E$ and $\eta$ is an element of order two of $\Pic0 X\smallsetminus\pi^*\Pic0 E$. 
\end{theoremalpha}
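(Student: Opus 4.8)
The plan is to study $V^0(K_X)$ using the structure results above, together with the constraints imposed by $q(X)=\dim X$, $p_1(X)\ne 0$ and $p_3(X)=2$. First I would observe that $q(X)=\dim X$ means $X$ has maximal Albanese dimension and $\mathrm{Alb}\,X$ has dimension $n=\dim X$; moreover $p_1(X)\ne 0$ means $\hat 0\in V^0(K_X)$. The hypothesis $p_3(X)=2$ is the binding one: it forces $X$ to be ``close'' to a torus but not a torus (if $p_1(X)=1$ and $p_2(X)=1$ then $X$ would be bimeromorphic to a torus by the Chen–Hacon / \cite{pps} Th.~B classification, and then $p_3=1\ne 2$). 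So I expect exactly $p_1(X)=1$, and I would first extract from $p_3(X)=2$ the existence of at least one nontrivial torsion point in $V^0(K_X)$, or a positive-dimensional component. The next step is to pin down that $V^0(K_X)$ must be a \emph{proper} subvariety of $\Pic0 X$ (otherwise, by the generic-vanishing/$M$-regularity circle of ideas, $p_m(X)$ grows and cannot equal $2$ at $m=3$) so that Corollary \ref{3} applies: $V^0(K_X)$ is a union of $\Pic0(g)\smallsetminus(g^*\Pic0 Y+N_g)$ and $\Pic0(g)$ over irregular fibrations $g:X\to Y$ with $\dim X-\dim Y=\dim\mathrm{Alb}\,X-\dim\mathrm{Alb}\,Y$; since $X$ has maximal Albanese dimension this means $\dim Y$ also has maximal Albanese dimension of the right size.

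Next I would run a numerical/Euler-characteristic argument on $a_*K_X$, where $a=alb_X$. Since $X$ has maximal Albanese dimension, $R^i a_*K_X=0$ for $i>0$ (Hacon/Kollár), so $h^0(K_X^{\otimes m})$ is computed on $\mathrm{Alb}\,X$ from $a_*K_X^{\otimes m}$, and by generic vanishing $h^0(K_X\otimes P_\eta)=\chi(K_X)$ for generic $\eta$ — but $p_1(X)=1$ combined with the structure of $V^0$ should force $\chi(K_X)=0$ and hence force $V^0(K_X)$ to be a (proper, hence lower-dimensional) union of translated subtori on which the relevant Euler characteristics vanish. The key case analysis: among the fibrations $g:X\to Y$ producing components of $V^0(K_X)$, I would argue that the only possibility compatible with $p_3(X)=2$ is $Y=E$ an elliptic curve (so $\dim X-\dim Y=n-1$, and by maximal Albanese dimension $\dim\mathrm{Alb}\,Y=1$), with $\Pic0(g)$ of the form $g^*\Pic0 E\cup (g^*\Pic0 E+\eta)$ for a single order-two $\eta\notin g^*\Pic0 E$ — i.e.\ $\Gamma_g\cong\ZZ/2$. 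Here Theorem \ref{2}(b) is the clean tool: $p=n-1$ (the maximal index with $V^p$ positive-dimensional is $n-1$ because a positive-dimensional component of $V^0$ of maximal Albanese dimension pulled back from $Y$ lives in $V^{\dim X-\dim Y}$ by Kollár), and $E$ is \emph{not} bimeromorphic to a torus only if $g(E)\ge 2$, which would kill the needed torsion translate; so in fact $E$ must be an elliptic curve and the nontrivial component of $V^0$ is the coset $g^*\Pic0 E+\eta$, with $\eta$ of order exactly two forced by $p_3(X)=2$ (order $\ge 3$ would push the plurigenus up).

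Finally, to get the double-cover conclusion I would analyze the sheaf $a_*K_X$ on $A:=\mathrm{Alb}\,X$. It is a rank-one torsion-free (indeed, by maximal Albanese dimension and Kollár, a GV-sheaf of generic rank one) with $V^0$ as computed, so its Chen–Jiang decomposition (the sharper Hacon decomposition invoked in the proof of Theorem \ref{1}) must read $a_*K_X=\OO_A\oplus(\text{pullback from }E\text{ of a positive line bundle, twisted by }P_\eta)$; the $\OO_A$ summand comes from $p_1(X)=1$ and the second summand from the single nontrivial component $g^*\Pic0 E+\eta$, with the line bundle on $E$ being $\OO_E(p)$ for a single point $p$ because $p_3(X)=2$ bounds its degree (degree one: $h^0(E,\OO_E(p)^{\otimes 3})=3$ would already exceed what $p_3=2$ allows after accounting for the $\OO_A$ part — I'd do the bookkeeping $p_3(X)=h^0(A,a_*K_X^{\otimes 3})$ carefully). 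Then the standard correspondence between rank-one sheaves of the form $\OO_A\oplus L^{-1}$ with an algebra structure and flat double covers (Atiyah / Catanese–Pardini) gives that $X$ is bimeromorphic to the double cover of $A$ with that pushforward, and $\pi:X\to E$ is the induced elliptic quotient. \textbf{The main obstacle} I anticipate is the second step: squeezing from the single numerical hypothesis $p_3(X)=2$ both that $V^0(K_X)$ is a proper subvariety (so that Corollary \ref{3}/Theorem \ref{2} even apply) and that the torsion is exactly order two with base exactly an elliptic curve — this requires a tight plurigenera estimate controlling $h^0$ of tensor powers of the (possibly non-split in general) sheaf $a_*K_X$ in terms of the combinatorics of its Chen–Jiang decomposition, ruling out higher-genus bases, $0$-dimensional extra components, and higher-order torsion all at once.
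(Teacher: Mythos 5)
Your overall strategy (analyze $V^0(K_X)$, show its nontrivial components come from a fibration over an elliptic curve with an order-two translate, read off the Chen--Jiang decomposition of $a_*K_X$, conclude rank two and a double cover) is indeed the shape of the paper's argument, but there are two genuine gaps. The first is at the very start: you assert that $q(X)=\dim X$ ``means $X$ has maximal Albanese dimension.'' That implication is false in general (the irregularity can equal the dimension while the Albanese image is smaller), and everything downstream in your plan --- $R^ia_*K_X=0$ for $i>0$, the applicability of Corollary \ref{3}, the Euler-characteristic bookkeeping on $\mathrm{Alb}\,X$ --- rests on it. In the paper this is precisely where the hypothesis $p_1(X)\ne 0$ enters (the introduction says so explicitly): one first shows, using the multiplication maps (\ref{mult})--(\ref{newversion4}) together with $p_3(X)=2$, that $\hat 0$ is an \emph{isolated} point of $V^0(K_X)$, and then Ein--Lazarsfeld (Corollary \ref{el}(a)) gives surjectivity, hence generic finiteness, of $alb_X$. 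You cannot simply assume it.

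The second gap is the one you yourself flag as ``the main obstacle'': extracting from $p_3(X)=2$ that the positive-dimensional components of $V^0(K_X)$ are one-dimensional translates of elliptic curves by points of order exactly two, with $h^0(K_X\otimes P_\alpha)=1$ generically, that each associated $\Pic0(g)$ has only two components, and finally that there is only \emph{one} such elliptic curve. Your plan gestures at ``tight plurigenera estimates'' and at Theorem \ref{2}(b)/Corollary \ref{3} plus $\chi$-bookkeeping, but does not supply the arguments, and the $\chi$-based route you sketch would still need them. The paper resolves all of these by elementary and specific devices: first $p_1(X)=1$, $p_2(X)=2$; then repeated use of the multiplication estimate (\ref{newversion4}) combined with the involution-invariance of $V^0(K_X)$ (so one can pair $K_X\otimes P_\alpha$ with $K_X\otimes P_{\alpha}^{-1}$) to bound $\dim W=1$ and the generic $h^0$; Beauville's theorem (the special case of Theorem \ref{2}) plus another multiplication-map estimate to show $\Gamma_{g_i}\cong\ZZ/2$; and, for uniqueness of the elliptic curve, the observation that $H^0(K_X^2)=H^0(X,g_i^*\OO_{E_i}(2p_i))$, so the bicanonical map factors through $E_i$, forcing $k=1$ --- this last step, which pins down both $k=1$ and $\F=\OO_E(p)$, has no counterpart in your outline. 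Without these the plan does not close.
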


 It should be mentioned that the result of \cite{hacon-pardini} is stronger, since it works without the hypothesis $p_1(X)\ne 0$, which, in our treatment, is used to ensure the surjectivity of the Albanese map. I will come back to this point in the future.
However, apart from this issue, the argument here seems to be simpler and more self-contained. Hopefully this method will find more application to the classification of irregular compact K\"ahler manifolds. 

The paper is organized as follows: there are five background sections, containing  material probably known to the experts, but not entirely found in the literature. The reader can use them as a glossary, starting directly from \S6. Although Theorem \ref{2} is essentially a more precise version of a particular case Theorem \ref{1}, as a matter of expository preference we prove it directly in \S6, with a  simpler and more self-contained argument. Theorem \ref{1} and the other corollaries are proved in \S7. Theorem \ref{4} is proved in \S8.

\medskip\noindent\textbf{Acknowledgement. }  I am very indebted to Zhi Jiang for pointing out  some gaps and errors in a previous version of this paper. 

\section{Background material: GV, M-regular, extremal components}

Let $X$ be a compact K\"ahler manifold and $a:X\rightarrow A$ a morphism to a complex torus. Given a coherent sheaf $\F$ on $X$ one can consider its cohomological support loci (with respect to $a$)
\begin{equation}\label{GLsetsII}V^i_a(X, \F)=\{\alpha\in \Pic0 A\>| \> h^i(X,\F\otimes a^* P_\alpha)>0 \}
\end{equation}
As for cohomology groups, we will often suppress $X$ from the notation, writing simply $V^i_a(\F)$. The cohomological support loci defined in (\ref{GLsets}) are  particular cases of the above, since $\Pic0 X=alb^*\Pic0 (\mathrm{Alb}\, X)$ via the Albanese map \ $alb:X\rightarrow \mathrm{Alb}\, X$. If this is not cause of confusion, when $a=alb$ we suppress it from the notation, and simply write $V^i(\F)$.  

A coherent sheaf $\F$ on $X$ is said to be $GV$ (with respect to $a$) if 
\[\mathrm{codim}_{\Pic0 A}V^i_a(\F)\ge i\quad\hbox{ for all  $i$.}\]
 In particular, this implies that the loci $V^i_a(\F)$ are strictly contained in $\Pic0 A$ for all $i> 0$.  Hence $\chi(\F)\ge 0$, and $\chi(\F)>0$ if and only if $V^0_a(\F)=\Pic0 A$.  

A GV sheaf $\F$ (with respect to $a$) is said to be \emph{$M$-regular} if the inequalities are strict for $i>0$, namely  $\mathrm{codim}_{\Pic0 A}V^i_a(\F)> i$ for  $i>0$. Therefore, the difference between GV and M-regular is the presence of subvarieties of $V^i$ of codimension $i$ for some $i>0$. Henceforth we  will refer at them as \emph{extremal components}. This difference is best appreciated via the Fourier-Mukai transform associated to a Poincar\`e line bundle.
We refer to the surveys \cite{pp3} and \cite{pareschi} or to the papers \cite{chen-jiang}, \cite{pps} for a thorough discussion of this aspect. Here we will give just a minimal account. 

In the rest of this section we will assume that $\F$ is coherent sheaf on the complex torus $A$ (and the morphism $a$ is simply the identity). Let  $\cP$ be a Poincar\`e line bundle on  $A\times \Pic0 A$. Let
\[\Phi_{\cP}:\DD(coh(\OO_A))\rightarrow \DD(coh(\OO_{\Pic0 A}))\]
be the Fourier-Mukai functor associated to $\cP$.  As it is well known, this is an equivalence  (\cite{mukai}, see also \cite{huybrechts}, and \cite{bbbp} for the case of non-algebraic complex tori). We consider also the (unshifted)  dualizing functor 
\ $\DD(coh(\OO_A))\rightarrow \DD(coh(\OO_{A}))$ \ 
defined by $\F^\vee=\mathbf{R}\mathcal{H}om(\F,\OO_A)$. 
\begin{theorem}\label{FM} Let $q=\dim A$. Let $\F$ be a coherent sheaf on a complex torus $A$. 

\noindent (a) $\F$ is a GV sheaf  if and only if $\Phi_{\cP}(\F^\vee)$ is a sheaf in cohomological degree $q$, denoted $\widehat{\F^\vee}[-q]$. The support of $\widehat{\F^\vee}[-q]$ is $-V^0(\F)$. 

\noindent (b) A GV sheaf $\F$ is M-regular if and only if the sheaf $\widehat{\F^\vee}$  torsion-free. If $\F$ is not M-regular,  to each extremal component for $\F$, say $W$, corresponds  a torsion subsheaf of  $\widehat{\F^\vee}$ (supported on $-W$) and conversely.
\end{theorem}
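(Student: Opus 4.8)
The plan is to reduce both statements, by derived base change and duality, to commutative algebra on the smooth variety $\Pic0 A$, isolating the one genuinely non‑formal input (the cohomological characterization of $GV$). Write $q=\dim A$; let $\iota\colon\Pic0 A\to\Pic0 A$ be the involution $\alpha\mapsto-\alpha$, and for $\alpha\in\Pic0 A$ let $P_\alpha$ denote the restriction of $\cP$ to $A\times\{\alpha\}$. The basic computation comes from derived base change along the proper morphism $p_2\colon A\times\Pic0 A\to\Pic0 A$, which gives $\Phi_\cP(\F^\vee)\otimes^{\mathbf L}k(\alpha)\simeq\mathbf{R}\Hom_A(\F,P_\alpha)$; since $\omega_A\cong\OO_A$ and $\dim A=q$, Serre duality on $A$ rewrites this degreewise as
\[
\mathcal{H}^i\bigl(\Phi_\cP(\F^\vee)\otimes^{\mathbf L}k(\alpha)\bigr)\;\cong\;H^{q-i}(A,\F\otimes P_{-\alpha})^\vee\qquad(\star)
\]
for all $i$ (the right side vanishing unless $0\le i\le q$, as $\F$ is a sheaf). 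So, after applying $\iota$, the locus where $\mathcal{H}^{q-i}$ of the transformed complex is nonzero is precisely $V^i(\F)$. I would also record the Grothendieck--Serre duality isomorphism for $p_2$, whose relative dualizing sheaf is trivial, which together with $\cP^{-1}\cong(\mathrm{id}_A\times\iota)^*\cP$ gives, for all coherent $\F$,
\[
\mathbf{R}\mathcal{H}om_{\Pic0 A}\bigl(\Phi_\cP(\F),\OO\bigr)\;\cong\;\iota^*\Phi_\cP(\F^\vee)[q].\qquad(\dagger)
\]

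For (a): by $(\star)$, $\F$ is $GV$ iff for every $j$ the locus where $\mathcal{H}^j(\Phi_\cP(\F^\vee)\otimes^{\mathbf L}k(\alpha))\ne0$ has codimension $\ge q-j$. One direction is soft: if $\Phi_\cP(\F^\vee)=\widehat{\F^\vee}[-q]$ for a coherent sheaf, that $\mathcal{H}^j$ equals $\mathrm{Tor}_{q-j}(\widehat{\F^\vee},k(\alpha))$, and on a smooth variety the nonvanishing locus of $\mathrm{Tor}_m(\mathcal{G},k(\alpha))$ has codimension $\ge m$ (there $\mathrm{pd}_\alpha\mathcal{G}\ge m$, hence $\dim\OO_\alpha\ge m$ by Auslander--Buchsbaum), so $\F$ is $GV$. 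For the converse I would use $(\dagger)$ to restate ``$\Phi_\cP(\F^\vee)$ is concentrated in degree $q$'' as ``$\mathbf{R}\mathcal{H}om(\Phi_\cP(\F),\OO)$ is a sheaf in degree $0$''. Here $\Phi_\cP(\F)$ is perfect (being $\mathbf{R}p_{2*}$ of a coherent sheaf on the smooth variety $A\times\Pic0 A$) of Tor‑amplitude $[0,q]$, with $\mathcal{H}^i(\Phi_\cP(\F)\otimes^{\mathbf L}k(\alpha))=H^i(A,\F\otimes P_\alpha)$, so its cohomology‑support loci are the $V^i(\F)$, of codimension $\ge i$; that its derived dual has no higher cohomology sheaves then follows by the hyper‑$\EExt$ spectral sequence from these codimension bounds on $\mathrm{Supp}\,\mathcal{H}^i(\Phi_\cP(\F))$. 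This is Hacon's generic vanishing theorem in Fourier--Mukai form (\cite{pp3}), and in the compact K\"ahler setting --- where $\Phi_\cP$ is still an equivalence (\cite{bbbp}) --- it is \cite{pps}. Finally the support assertion is $(\star)$ with $i=q$: $\mathrm{Supp}(\widehat{\F^\vee})=\{\alpha:\widehat{\F^\vee}\otimes k(\alpha)\ne0\}=\{\alpha:H^0(A,\F\otimes P_{-\alpha})\ne0\}=-V^0(\F)$.

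For (b), assume $\F$ is $GV$, so by (a) $\Phi_\cP(\F^\vee)=\widehat{\F^\vee}[-q]$. Reading $(\star)$ through this gives $\dim_{\CC}\mathrm{Tor}_m(\widehat{\F^\vee},k(\alpha))=h^m(A,\F\otimes P_{-\alpha})$ for all $\alpha$ and all $m\ge0$; in particular the closed locus $\{\alpha:\mathrm{Tor}_m(\widehat{\F^\vee},k(\alpha))\ne0\}$ equals $\iota(V^m(\F))$. If $\F$ is not $M$‑regular, fix $m>0$ and an irreducible component $W$ of $V^m(\F)$ with $\mathrm{codim}\,W=m$; at the generic point $\xi$ of $\iota(W)$ one has $\mathrm{pd}_\xi\widehat{\F^\vee}\ge m=\dim\OO_{\Pic0 A,\xi}$, so $\mathrm{depth}_\xi\widehat{\F^\vee}=0$ by Auslander--Buchsbaum, i.e.\ $\xi\in\mathrm{Ass}(\widehat{\F^\vee})$ has positive codimension; hence the subsheaf of $\widehat{\F^\vee}$ of sections supported on $\iota(W)$ is a nonzero torsion subsheaf supported on $\iota(W)=-W$. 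Conversely, if $\widehat{\F^\vee}$ has nonzero torsion, let $Z$ be a component, of some codimension $c\ge1$, of the support of its torsion subsheaf; its generic point $\eta$ lies in $\mathrm{Ass}(\widehat{\F^\vee})$, so $\mathrm{depth}_\eta\widehat{\F^\vee}=0$, hence $\mathrm{pd}_\eta\widehat{\F^\vee}=c$, hence $\mathrm{Tor}_c(\widehat{\F^\vee},k(\eta))\ne0$, hence $h^c(A,\F\otimes P_{-\eta})\ne0$; thus the codimension‑$c$ irreducible set $\iota(Z)$ lies in $V^c(\F)$, and since $\F$ is $GV$ it is a whole (hence extremal) component. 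This gives the asserted correspondence, and in particular $\widehat{\F^\vee}$ is torsion‑free iff $\F$ has no extremal component, i.e.\ iff $\F$ is $M$‑regular.

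The only non‑formal step is the converse implication in (a): that $GV$ forces the Fourier--Mukai transform of $\F^\vee$ into a single cohomological degree. I expect this to be the main obstacle --- it is where generic vanishing genuinely enters, equivalently where one exploits the inversion/structural properties of the Fourier--Mukai equivalence through $(\dagger)$, and where passing from abelian varieties to arbitrary complex tori needs the K\"ahler results of \cite{bbbp} and \cite{pps}. Everything else is base change, Serre and Grothendieck duality, and Auslander--Buchsbaum on the smooth variety $\Pic0 A$.
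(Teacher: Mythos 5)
Your argument is essentially sound, but note that the paper does not prove Theorem \ref{FM} at all: it is stated as a known result with a pointer to \cite{pp3} and \cite{pareschi}, so there is no internal proof to compare against. What you have written is in effect a reconstruction of the standard argument from those references (base change along $p_2$ plus Serre duality to get $(\star)$, the exchange formula $(\dagger)$ from Grothendieck duality, and Auslander--Buchsbaum on $\Pic0 A$ for the torsion/extremal-component dictionary), and you correctly isolate the genuinely non-formal input, namely that GV forces $\Phi_\cP(\F^\vee)$ into the single degree $q$, which you attribute to Hacon/Pareschi--Popa and, for non-algebraic tori and the K\"ahler setting, to \cite{bbbp} and \cite{pps} --- exactly the sources the paper defers to. Two small points to tighten: in part (a) the codimension bound for the Tor-nonvanishing locus should be run at the generic point $\xi$ of a component (where $\dim\OO_{\Pic0 A,\xi}$ equals the codimension), not at closed points $\alpha$, where $\dim\OO_\alpha=q$; you in fact argue this way in part (b), so it is only a matter of phrasing. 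And in your sketch of the converse in (a), passing from the codimension bounds on the $V^i(\F)$ (fiberwise cohomology) to bounds on $\mathrm{Supp}\,\mathcal{H}^i(\Phi_\cP(\F))$ needs the base-change inclusion $\mathrm{Supp}\,R^i{p_2}_*(p_1^*\F\otimes\cP)\subseteq V^i(\F)$; since you ultimately cite this equivalence rather than reprove it, the gap is harmless, but it is worth stating if you want the sketch to be self-contained.
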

As mentioned above, this  a well known fact, see e.g.  \cite{pp3} and \cite{pareschi}.
A consequence of Theorem \ref{FM} is the following non-vanishing result, see e.g. \cite{pareschi} Lemma 1.12 where it is stated only in the algebraic case, but its proof works for complex tori as well. 

\begin{corollary}\label{non-vanishing}  Let  $\F$ be a non-zero  GV sheaf on a complex torus $A$.  Then\\
(a) $V^0(\F)\ne\emptyset$.  \\
 (b) if $\F$ is a M-regular  then $V^0(\F)=\Pic0 A$ and $\chi(\F)>0$.
 \end{corollary}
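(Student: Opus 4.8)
The statement to prove is Corollary \ref{non-vanishing}: for a non-zero GV sheaf $\F$ on a complex torus $A$, (a) $V^0(\F)\neq\emptyset$, and (b) if $\F$ is M-regular then $V^0(\F)=\Pic0 A$ and $\chi(\F)>0$.

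\medskip

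The plan is to deduce everything from the Fourier--Mukai characterization in Theorem \ref{FM}. For part (a), I would argue by contradiction: suppose $V^0(\F)=\emptyset$. By Theorem \ref{FM}(a), since $\F$ is GV, the transform $\Phi_{\cP}(\F^\vee)$ is concentrated in cohomological degree $q=\dim A$, i.e. equal to $\widehat{\F^\vee}[-q]$, and its support is $-V^0(\F)$. If $V^0(\F)$ were empty, this support would be empty, so $\widehat{\F^\vee}=0$, hence $\Phi_{\cP}(\F^\vee)=0$. Since $\Phi_{\cP}$ is an equivalence of derived categories (Mukai), this forces $\F^\vee=\mathbf{R}\mathcal{H}om(\F,\OO_A)=0$. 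But then, applying $\mathbf{R}\mathcal{H}om(-,\OO_A)$ again (biduality on the derived category of a smooth variety), we get $\F=0$, contradicting the hypothesis that $\F$ is non-zero. Hence $V^0(\F)\neq\emptyset$.

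\medskip

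For part (b), assume $\F$ is M-regular. By Theorem \ref{FM}(b), $\widehat{\F^\vee}$ is torsion-free; by part (a) it is non-zero; and it is a sheaf on the irreducible variety $\Pic0 A$. A non-zero torsion-free sheaf on an integral scheme has full support, so $\mathrm{Supp}(\widehat{\F^\vee})=\Pic0 A$. By Theorem \ref{FM}(a) this support equals $-V^0(\F)$, so $V^0(\F)=\Pic0 A$. Finally, as recalled in the discussion preceding the corollary, for a GV sheaf one has $\chi(\F)\ge 0$, with $\chi(\F)>0$ if and only if $V^0(\F)=\Pic0 A$ (this comes from base-change: $\chi(\F)=\chi(\F\otimes P_\alpha)$ for all $\alpha$, and for generic $\alpha$ all higher cohomology vanishes by the GV condition, so $\chi(\F)=h^0(\F\otimes P_\alpha)$ for generic $\alpha$, which is positive exactly when $V^0(\F)$ is everything). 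Therefore $\chi(\F)>0$.

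\medskip

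The only genuinely delicate point is the use of biduality in part (a): I need that $\mathbf{R}\mathcal{H}om(\mathbf{R}\mathcal{H}om(\F,\OO_A),\OO_A)\cong\F$ for $\F$ a coherent sheaf on the smooth variety $A$, which is standard (Grothendieck duality / the fact that $\OO_A$ is a dualizing complex up to shift on a smooth variety), but one should make sure $\F^\vee$ is understood as a complex rather than just a sheaf so that the argument is not circular. Everything else is a formal consequence of Theorem \ref{FM} and the elementary fact that a non-zero torsion-free sheaf on an integral scheme has full support. An alternative, slightly more hands-on route for (a) avoiding biduality would be to pick a generic $\alpha$ (outside all $V^i_a(\F)$ for $i>0$, possible since these are proper subvarieties) and observe $h^0(\F\otimes P_\alpha)=\chi(\F)$; if this were zero for all generic $\alpha$ one would still need an argument that $\chi(\F)>0$, which is not automatic — so the Fourier--Mukai argument is cleaner and is the one I would present.
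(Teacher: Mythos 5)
Your proof is correct, and it follows essentially the route the paper itself intends: the paper gives no argument of its own but refers to \cite{pareschi}, Lemma 1.12, whose proof is exactly this formal Fourier--Mukai argument — emptiness of $V^0(\F)$ would kill $\widehat{\F^\vee}$ and hence $\F$ by the equivalence $\Phi_\cP$ and biduality, while M-regularity forces the torsion-free transform to have full support $-V^0(\F)=\Pic0 A$, whence $\chi(\F)>0$ by the generic-vanishing computation already recalled in \S2.
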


 Here is a basic  example of GV but non-M-regular sheaf occurring frequently in what follows.

\begin{example}\label{pullback}[Pullback of M-regular sheaves  on quotients] Let $\pi:A\rightarrow B$ be a surjective morphism of complex tori, with $\dim A-\dim B=m>0$, and let $\F$ be a $M$-regular sheaf on $B$. Then for all  $\alpha\in \Pic0 A$ the sheaf (on $A$) $ \pi^*(\F)\otimes P_\alpha^{-1}$ is  GV  but  not M-regular.  Indeed,   supposing for simplicity that $\pi$ has connected fibers, for $j\le  m$ we have that
\begin{equation}\label{banale} R^j\pi_*(\pi^*\F\otimes P_\gamma)=\begin{cases} (\F\otimes P_\beta)^{\oplus {m\choose j}}&\hbox{for $P_\gamma=\pi^*P_\beta$ with $\beta \in\Pic0 B$}\\0&\hbox{otherwise}\\
\end{cases}
\end{equation}
Since $\F$ is assumed to be M-regular we have that
\begin{equation}\label{banale2} V^k(B,\F)=\begin{cases}\Pic0 B&\hbox{for $k=0$}\\
\subsetneq\Pic0 B&\hbox{otherwise.}\\
\end{cases}
\end{equation}
  Therefore, combining (\ref{banale}), (\ref{banale2}), projection formula and the Leray spectral sequence we get that 
  \[V^k(A,pi^*(\F)\otimes P_\alpha^{-1})=\pi^*\Pic0 B+\alpha\quad\hbox{for $k=0,..,m$}\] 
for all $\alpha\in\Pic0 A$.  In particular
$V^m(A, \pi^*(F)\otimes P_\alpha^{-1})$  has codimension $m$, hence $\pi^*F\otimes P_\alpha^{-1}$ is GV but it is not M-regular. 
A similar computation shows that $\pi^*\Pic0 B+\alpha$ is the only extremal component for the sheaf $\pi^*(\F)\otimes P_\alpha^{-1}$. 

This is perhaps more  suggestively seen from the Fourier-Mukai point of view.  Here  we will  use the following basic fact about Fourier-Mukai transforms associated to Poincar\`e line bundle on complex tori. Let $\pi:A\rightarrow B$ be a quotient of complex tori,  and let 
 $\hat\pi:\Pic0 B\rightarrow \Pic0 A$ be the dual homomorphism. Then we have the following natural isomorphism of functors (see \cite{chen-jiang} Prop. 2.3, where it is stated for abelian varieties, but the proof works for complex tori without changes)
 \begin{equation}\label{identi} \Phi_{\cP_A}\circ\pi^* \cong \hat\pi_*\circ \Phi_{\cP_B}[\dim B-\dim A]
 \end{equation}
 Gong back to the subject of the present Example, we know from Theorem \ref{FM}(b) that the Fourier-Mukai trasform on $B$  of $F^\vee$ is a torsion-free sheaf, say $\G$, in cohomological degree $\dim B$ on $\Pic0 B$. By (\ref{identi}) the Fourier-Mukai trasform on $A$ of $(\pi^*\F)^\vee$ is the torsion sheaf, in cohomological degree $\dim A$, consisting of the torsion-free sheaf $\G$ on $\pi^*\Pic0 B$ seen as a torsion sheaf on $\Pic0 A$. 
 
 Similarly the Fourier-Mukai transform on $A$ of $\pi^*(\F\otimes P_\alpha^{-1})^\vee$ is the torsion-free sheaf on $\pi^*\Pic0 B-[\alpha]=-V_0(\pi^*(\F)\otimes P_{\alpha^{-1}})$, seen as a torsion sheaf on $\Pic0 A$. 
\end{example}

 \section{Background material: Generic vanishing theorem and Chen-Jiang decomposition, I}
 
 The idea of generic vanishing  is due to Green and Lazarsfeld (\cite{gl1} and \cite{gl2}). Since then their theorems have been extended in various directions. One of these is generic vanishing for higher direct images of canonical sheaves, due to Hacon, \cite{hacon}. The most updated version is Theorem \ref{vanishing} below. 
  The idea of a decomposition as in the statement  was introduced  for $i=0$ in the projective case in \cite{chen-jiang}. This was extended to all $i$, also in the compact K\"ahler setting, was proved in \cite{pps} using the theory of Hodge modules.  It will be referred to as  \emph{Chen-Jiang decomposition}.

\begin{theorem}\label{vanishing} \emph{(\cite{hacon}, \cite{chen-jiang}, \cite{pps})} Let $f:X\rightarrow A$ be a morphism from a compact K\"ahler manifold to a complex torus.  Let $\eta$ be a point of finite order of $\Pic0 X$. Then, for all $i$, 
\[R^if_*(K_X\otimes P_\eta)=\bigoplus_k \pi_k^*(\F_k)\otimes P_{\alpha_k}^{-1}\]
where:  each $\pi_k:A\rightarrow B_k$ is a surjective morphism of complex tori with connected fibers,  $\F_k$ is a M-regular coherent sheaf supported on a projective subvariety of  $B_k$, 
 and $\alpha_k$ is a point of finite order of $\Pic0 A$. 
In particular, $R^if_*(K_X\otimes P_\eta)$ is a 
GV sheaf on $A$.
\end{theorem}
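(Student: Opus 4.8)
The plan is to realize $K_X\otimes P_\eta$ as a graded piece of the Hodge filtration of a polarizable Hodge module on $X$, to push it forward by $f$ using Saito's theory, and then to invoke a structure theorem for polarizable Hodge modules on complex tori. The projective case with $i=0$ is due to Chen and Jiang \cite{chen-jiang}; the general case is \cite{pps}, which also rests on Hacon's generic vanishing theorem \cite{hacon}. As $\eta$ has finite order, the flat line bundle $P_\eta$ underlies a polarizable complex variation of Hodge structure, so one has a polarizable Hodge module $\cM$ on $X$, normalized so that the lowest nonzero graded piece of its Hodge filtration, tensored with $\omega_X$, equals $K_X\otimes P_\eta$. (To work with $\ZZ$-coefficients only, one first passes to the connected cyclic \'etale cover $\mu:X'\to X$ with $\mu_*\OO_{X'}=\bigoplus_j P_\eta^{\otimes j}$; since $\mu$ is finite, $R^i(f\circ\mu)_*\omega_{X'}=\bigoplus_j R^if_*(K_X\otimes P_\eta^{\otimes j})$ compatibly with the Galois action, so it suffices to prove the decomposition for the untwisted sheaf on $X'$ equivariantly for a finite group and then extract the $\eta$-isotypic summand.)

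Because $f$ is proper, Saito's decomposition theorem gives $f_+\cM\cong\bigoplus_i M_i[-i]$ with each $M_i$ a polarizable Hodge module on $A$. By the strictness of the Hodge filtration under proper direct image, which is Saito's enhancement of Koll\'ar's torsion-freeness and vanishing theorems, the lowest graded piece of the $\cD$-module underlying $M_i$ equals $R^if_*(K_X\otimes P_\eta)$ (the canonical bundle of $A$ being trivial). That this sheaf is $GV$ on $A$ is Hacon's theorem, in the form extended in \cite{pps} to the K\"ahler category and to all higher direct images.

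The decisive input is a structure theorem for an arbitrary polarizable Hodge module $N$ on a complex torus $A$: $N$ decomposes as a finite direct sum of Hodge modules of the form $(\pi^*N')\otimes L_\gamma$, where $\pi:A\to B$ is a surjective morphism of complex tori with connected fibres, $N'$ is a polarizable Hodge module on $B$ whose Fourier--Mukai transform is torsion-free (equivalently, the lowest graded piece of its Hodge filtration is $M$-regular), and $L_\gamma$ is the unitary flat line bundle of a point $\gamma$ of finite order of $\Pic0 A$. Applying this to $N=M_i$ and passing to lowest graded pieces throughout, one uses: that $\pi^*$ on filtered $\cD$-modules induces $\pi^*$ on the associated graded sheaves, the relative canonical bundle of a torus quotient being trivial; that twisting by $L_\gamma$ becomes twisting by $P_\gamma$; and that the passage from torsion-freeness of the Fourier--Mukai transform of $N'$ to $M$-regularity of the corresponding sheaf $\F_k$ on $B_k$ is, at the level of coherent sheaves, Theorem \ref{FM}(b) together with the compatibility (\ref{identi}) of Fourier--Mukai transforms with pullback along quotients. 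The translating points $\alpha_k=-\gamma_k$ are of finite order because $\eta$ is, so that only constituents carrying finite-order monodromy occur, and $\mathrm{Supp}\,\F_k$ is a projective subvariety of $B_k$ (a delicate point, settled in \cite{pps}). This yields $R^if_*(K_X\otimes P_\eta)=\bigoplus_k\pi_k^*(\F_k)\otimes P_{\alpha_k}^{-1}$, and the final $GV$ assertion follows at once from Example \ref{pullback}, each summand being $GV$ and a finite direct sum of $GV$ sheaves being $GV$.

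The principal obstacle is the structure theorem for polarizable Hodge modules on a complex torus. Over an abelian variety it is due to Schnell and rests on the Fourier--Mukai transform for $\cD$-modules of Laumon and Rothstein together with its exactness for the perverse $t$-structures; over a possibly non-algebraic complex torus one must first put this transform in place in that generality and then control how the strict-support decomposition behaves under it, which is where the bulk of the work in \cite{pps} lies. For $X$ projective and $i=0$ one can dispense with the structure theorem and argue instead by induction on $\dim A$, in the spirit of \cite{chen-jiang}, using Koll\'ar's torsion-freeness and vanishing theorems and the splitting of $GV$ sheaves afforded by Theorem \ref{FM}; but this approach does not extend easily to all $i$ or to the K\"ahler setting.
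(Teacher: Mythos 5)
The paper does not actually prove Theorem \ref{vanishing}: it is stated as background and attributed to \cite{hacon}, \cite{chen-jiang} and \cite{pps}, the compact K\"ahler case with higher direct images being precisely the Hodge-module argument of \cite{pps}. Your sketch faithfully reproduces that argument (handling the finite-order twist by a cyclic cover or a unitary variation of Hodge structure, Saito's decomposition and strictness for the proper pushforward, then the structure theorem for polarizable Hodge modules on a complex torus, translated into the Chen--Jiang decomposition and M-regularity via the Fourier--Mukai dictionary of Theorem \ref{FM} and (\ref{identi})), so it is consistent with the paper's treatment; like the paper, it defers the decisive structure theorem --- where the real work lies --- to \cite{pps}.
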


\begin{remark}\label{important}(Chen-Jiang summands.)
(a) Note that the homomorphisms $\pi_k$ can include the identity of $A$. By Corollary \ref{non-vanishing}, this happens when $\chi(R^if_*(K_X\otimes P_\eta))>0$, which is  the generic rank of the M-regular summand. Since the support of $R^if_*(K_X\otimes P_\eta)$ is torsion-free   on $f(X)$ (Theorem \ref{decomp} below)  by Theorem \ref{vanishing} $f(X)$ is a projective variety in this case.

\noindent  (b)  By Example \ref{pullback}  other summands in the Chen-Jiang decomposition appear if and only if   $R^if_*(K_X\otimes P_\eta)$ is not M-regular. More precisely: \emph{there is exactly one of them for each pair $(m,W)$ with $m>0$ and $W$ a  extremal component of $V^m(R^if_*(K_X\otimes P_\eta))$} (here $m=\dim A-\dim B_k$).
\end{remark}

The important results summarized in the following Corollary are originally due to Ein and Lazarsfeld (\cite{el}). In the present treatment they follow at once from the Chen-Jiang decomposition and Example \ref{pullback}

\begin{corollary}\label{el}\emph{(Ein-Lazarsfeld)}
   (a) Assume that $V^0(R^if_*(K_X\otimes P_\eta))$ is a proper subvariety of $\Pic0 X$ \emph{( i.e. $\chi(R^if_*(K_X\otimes P_\eta))=0$)}.  Then, for each $j>0$, every component of codimension $j$ of $V^0(R^if_*(K_X\otimes P_\eta))$ is also an extremal component, namely a $j$-codimensional component  of $V^j(R^if_*(K_X\otimes P_\eta))$. In particular, if there is an isolated point in $V^0(R^if_*(K_X\otimes P_\eta))$ then $V^{\dim A}(R^if_*(K_X\otimes P_\eta))$ is not empty, hence the map $f$ is surjective.

 \noindent (b)  Extremal components  of $R^if_*(K_X\otimes P_\eta)$ are subtori-translates of the form 
 $\pi_k^*(\Pic0 B_k)+\alpha_k$ such that the fibers of the map $\pi_k\circ f:X\rightarrow B_k$ surject on the fibers of the homorphism $\pi_k:A\rightarrow B_k$. Equivalently: $\dim f(X)-\dim \pi_k(f(X))=\dim A-\dim B_k$. 
 \end{corollary}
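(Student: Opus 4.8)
The plan is to read both statements directly off the Chen--Jiang decomposition of Theorem \ref{vanishing}, combined with the explicit analysis of pulled-back $M$-regular sheaves in Example \ref{pullback} and the bookkeeping of Remark \ref{important}. Throughout I will write $\F:=R^if_*(K_X\otimes P_\eta)=\bigoplus_k\pi_k^*(\F_k)\otimes P_{\alpha_k}^{-1}$ for that decomposition and set $m_k:=\dim A-\dim B_k\ge 0$; recall that $\F$ is $GV$ by Theorem \ref{vanishing}.

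For part (a), first note that ``$V^0(\F)$ is a proper subvariety of $\Pic0 X$'' is, by Corollary \ref{non-vanishing}(b), the same as $\chi(\F)=0$, and by Remark \ref{important}(a) this forces $m_k>0$ for every $k$. Then Example \ref{pullback} gives $V^0(\F)=\bigcup_k\bigl(\pi_k^*\Pic0 B_k+\alpha_k\bigr)$, a finite union of translates of subtori of codimensions $m_k>0$, each of which is moreover the unique extremal component of the corresponding summand and appears as $V^{m_k}$ of that summand. If $Z$ is a codimension-$j$ component of $V^0(\F)$, then, being irreducible, it lies in some $\pi_{k_0}^*\Pic0 B_{k_0}+\alpha_{k_0}$, and maximality of $Z$ forces equality, whence $j=m_{k_0}$. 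Since $\F$ contains $\pi_{k_0}^*(\F_{k_0})\otimes P_{\alpha_{k_0}}^{-1}$ as a direct summand we get $V^j(\F)\supseteq V^j(\pi_{k_0}^*(\F_{k_0})\otimes P_{\alpha_{k_0}}^{-1})=Z$, and since $\F$ is $GV$ no irreducible subset of $V^j(\F)$ has codimension $<j$; hence the codimension-$j$ set $Z$ is maximal in $V^j(\F)$, i.e. a $j$-codimensional component of $V^j(\F)$, which is the first assertion. Finally, an isolated point of $V^0(\F)$ is a codimension-$(\dim A)$ component, hence lies in $V^{\dim A}(\F)$ by what was just proved; as $\F$ is supported on $f(X)$, non-vanishing of $H^{\dim A}(A,\F\otimes P_\alpha)$ for some $\alpha$ is impossible unless $\dim f(X)=\dim A$, so $f$ is surjective.

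For part (b), the description of the extremal components as translates $\pi_k^*\Pic0 B_k+\alpha_k$ is immediate: by Remark \ref{important}(b) they correspond exactly to the summands with $m_k>0$, and by Example \ref{pullback} the one attached to the $k$-th summand is that translate. For the geometric condition I would use Koll\'ar's theorem (Theorem \ref{decomp}): $\F$ is torsion-free as a sheaf on $f(X)$, with support equal to $f(X)$; hence each direct summand $\pi_k^*(\F_k)\otimes P_{\alpha_k}^{-1}$, being a nonzero torsion-free sheaf on $f(X)$, also has support equal to $f(X)$. But that support is $\pi_k^{-1}(\mathrm{Supp}\,\F_k)$, a union of fibers of $\pi_k$, so $f(X)=\pi_k^{-1}(\pi_k(f(X)))$ is a union of fibers of $\pi_k$; equivalently $\dim f(X)-\dim\pi_k(f(X))=m_k=\dim A-\dim B_k$, i.e. the general fiber of $\pi_k\circ f$ surjects onto the corresponding fiber of $\pi_k$.

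I expect the one genuinely non-formal input to be the torsion-freeness used in part (b): it is what excludes an extremal summand supported on a proper subvariety of $f(X)$ and thereby pins down $\mathrm{Supp}\,\F_k=\pi_k(f(X))$; once this is granted, everything else reduces to the computations of Example \ref{pullback} and the combinatorics of Remark \ref{important}. A minor point to be careful about in part (a) is the upgrade from ``$Z$ is a component of $V^0$'' to ``$Z$ is a component of $V^j$'', which uses the $GV$ codimension inequality to turn the mere inclusion $Z\subseteq V^j(\F)$ into maximality.
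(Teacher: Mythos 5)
Your proof is correct and follows the paper's own route: the paper derives this corollary "at once" from the Chen--Jiang decomposition (Theorem \ref{vanishing}) together with the computation of Example \ref{pullback}, which is exactly what you carry out, and your use of Koll\'ar--Takegoshi torsion-freeness (Theorem \ref{decomp}) to pin down the support of each summand in part (b) is the same input the paper invokes in Remark \ref{important}(a). The only cosmetic point is that the equivalence ``$V^0$ proper $\Leftrightarrow \chi=0$'' comes from the general GV discussion in \S 2 rather than Corollary \ref{non-vanishing}(b) itself.
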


The Fourier-Mukai meaning of the Chen-Jiang decomposition is summarized in the following

\begin{remark}\label{CJFM}(FM transform and Chen-Jiang decomposition.) Let $i\le \dim X-\dim f(X)$, and  denote $\mathcal{R}_i=R^if_*(K_X\otimes P_\eta)$. Assuming that it is non-zero, the combination of Theorems  \ref{vanishing} and \ref{FM} tells that:\\
(i) the FM-transform of $\mathcal{R}_i^\vee$ is a sheaf in cohomological degree $\dim A$: \  $\widehat{\mathcal{R}_i^\vee}[-\dim A]$ ;\\
(ii) the sheaf $\widehat{\mathcal{R}_i^\vee}$ is the direct sum of its torsion part and its torsion-free part (one of them can be zero);\\
(iii)  the torsion part of  $\widehat{\mathcal{R}_i^\vee}$  (if any) is the direct sum of torsion-free sheaves on translates of subtori $\pi_k^*\Pic0 B_k-[\alpha_k]$, seen as sheaves on $\Pic0 A$. These sheaves are the translates by $-[\alpha_k]$ of the transforms on $B_k$ of $\F_k^\vee$. They are in 1-1 correspondence with the extremal components. 
\end{remark}

\begin{remark}\label{uniqueness} (Uniqueness of the Chen-Jiang decomposition.) From the previous Remark it follows that  the Chen-Jiang decomposition is essentially \emph{canonical}: the sheaves $\pi_k^*\F_k\otimes P_{\alpha_k}^{-1}$ are essentially unique. In fact  -- via the inverse FM functor  \ $\DD(coh(\OO_{\Pic0 A}))\rightarrow \DD(coh(\OO_{A}))$ -- their duals are the transforms respectively of the torsion-free part of $\widehat{\F^\vee}$ and of the components of the torsion part of $\widehat{\F^\vee}$. In particular, their supports, namely the translated subtori $\pi_k^*\Pic0 B_k+\alpha_k$ are uniquely determined (up to reordering)\footnote{ the surjective homorphisms with connected fibres $\pi_k:A\rightarrow B_k$ are not uniquely determined. However one can arrange them in such a way that $\pi_k$ factorizes trough $\pi_h$ if $\pi_k^*\Pic0 B_k$ is contained $\pi_h^*\Pic0 B_h$}.
\end{remark}

 \section{Background material: Generic vanishing theorem and Chen-Jiang decomposition, II}

\subsection{Koll\'ar decomposition} 

This is the other essential tool. We state it only in the version we will need 

\begin{theorem}\label{decomp}
Let $f: X \rightarrow Y$ be a proper morphism from a compact K\"ahler
manifold to a reduced and irreducible analytic space, and let $\eta$ be a torsion point of $ \Pic0(X)$. Then, in the derived category $\DD(coh(\OO_Y))$,
 \[{\bf R} f_*(K_X\otimes P_\eta) = \bigoplus_j \bigl( R^j f_* (K_X\otimes P_\eta) \bigr) [-j]\]
Moreover, if $f$ is surjective, then $R^j f_*(K_X \otimes P_\eta)$ is torsion-free for every
$j\ge 0$. In particular, it vanishes for $j > \dim X - \dim Y$. In general $R^j f_*(K_X \otimes P_\eta)$ vanishes for $j>\dim X-\dim f(X)$.
\end{theorem}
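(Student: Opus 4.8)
The plan is to reduce the twisted statement to the untwisted one ($\eta=0$) by passing to the \'etale cyclic cover attached to $P_\eta$, and then to invoke Koll\'ar's decomposition and torsion-freeness theorems for higher direct images of the canonical sheaf, in the compact K\"ahler generality needed here (for which I would cite \cite{pps}; in the projective case this is Koll\'ar \cite{kollar}, while the K\"ahler case rests on Saito's theory of Hodge modules, or on Takegoshi's harmonic-theoretic methods).

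First I would set $n=\mathrm{ord}(\eta)$, so $P_\eta^{\otimes n}\cong\OO_X$, and let $\psi:\widetilde X\rightarrow X$ be the degree-$n$ \'etale cyclic cover with $\psi_*\OO_{\widetilde X}\cong\bigoplus_{k=0}^{n-1}P_\eta^{\otimes k}$, with Galois group $G=\ZZ/n$. Since $\psi$ is a finite \'etale map from a compact K\"ahler manifold, $\widetilde X$ is again compact K\"ahler (pull back a K\"ahler form), $K_{\widetilde X}\cong\psi^*K_X$, and by the projection formula
\[\psi_*K_{\widetilde X}\;\cong\;\bigoplus_{k=0}^{n-1}K_X\otimes P_\eta^{\otimes k}.\]
As $\psi$ is finite, $\mathbf{R}\psi_*=\psi_*$, so applying $\mathbf{R}f_*$ yields
\[\mathbf{R}(f\circ\psi)_*K_{\widetilde X}\;\cong\;\bigoplus_{k=0}^{n-1}\mathbf{R}f_*\bigl(K_X\otimes P_\eta^{\otimes k}\bigr)\]
in $\DD(coh(\OO_Y))$, which is precisely the decomposition into $G$-isotypic components; likewise $R^j(f\psi)_*K_{\widetilde X}\cong\bigoplus_k R^jf_*(K_X\otimes P_\eta^{\otimes k})$ for each $j$. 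Now I would apply Koll\'ar's decomposition theorem to the proper morphism $f\circ\psi:\widetilde X\rightarrow Y$; since the construction is $G$-equivariant, the splitting $\mathbf{R}(f\psi)_*K_{\widetilde X}\cong\bigoplus_j R^j(f\psi)_*K_{\widetilde X}[-j]$ may be chosen $G$-equivariant (alternatively, the obstructions to formality of a direct summand vanish as soon as they do for the whole complex, applied here to the $G$-isotypic summand). Passing to the $k=1$ isotypic component gives
\[\mathbf{R}f_*(K_X\otimes P_\eta)\;\cong\;\bigoplus_j R^jf_*(K_X\otimes P_\eta)[-j],\]
which is the first assertion.

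For the remaining assertions I would stay on $\widetilde X$. If $f$ is surjective then so is $f\circ\psi$, and Koll\'ar's torsion-freeness theorem makes each $R^j(f\psi)_*K_{\widetilde X}$ torsion-free; hence so is its direct summand $R^jf_*(K_X\otimes P_\eta)$. Since $\dim\widetilde X=\dim X$, the same theorem gives $R^j(f\psi)_*K_{\widetilde X}=0$ for $j>\dim X-\dim Y$, hence $R^jf_*(K_X\otimes P_\eta)=0$ in that range; the sharp bound uses that $R^j(f\psi)_*K_{\widetilde X}$ is generically zero on $Y$ — over the locus where $f\psi$ is a submersion onto the smooth part of $Y$, base change identifies its fibers with $H^j$ of a smooth fiber of dimension $\dim X-\dim Y$, which vanishes for such $j$ — together with torsion-freeness. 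Finally, for $f$ not necessarily surjective I would use the Stein factorization $f=h\circ g$ with $g:X\rightarrow Z$ having connected fibers and $h:Z\rightarrow Y$ finite onto $f(X)$; then $R^jf_*(K_X\otimes P_\eta)=h_*R^jg_*(K_X\otimes P_\eta)$, and the surjective case applied to $g$ (with $\dim Z=\dim f(X)$) gives the vanishing for $j>\dim X-\dim f(X)$.

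The genuine content, and the main obstacle, is the untwisted input on $\widetilde X$: Koll\'ar's decomposition and torsion-freeness for higher direct images of the canonical sheaf under a proper morphism from a compact K\"ahler manifold to a possibly non-algebraic reduced irreducible analytic space, which is not elementary. Once that is granted, everything above is formal: the cyclic-cover reduction, the compatibility of the decomposition with the $G$-action, and the dimension and torsion-freeness bookkeeping.
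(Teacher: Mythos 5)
The paper offers no proof of Theorem \ref{decomp}: it is quoted as known, with the projective case attributed to Koll\'ar \cite{kollar2}, Leray degeneration and torsion-freeness over analytic bases to Takegoshi \cite{takegoshi}, and the full statement, torsion twist included, to Saito's Hodge-module approach \cite{saito1}, \cite{saito2} as recorded in \cite{pps}. Your route is therefore a different (and legitimate) presentation: you reduce the twisted statement to the untwisted one for $f\circ\psi$ via the cyclic cover, and this part is essentially correct --- $\psi$ is finite \'etale, $\widetilde X$ is K\"ahler, $\psi_*K_{\widetilde X}\cong\bigoplus_k K_X\otimes P_\eta^{\otimes k}$, torsion-freeness and the vanishing bounds pass to direct summands, and the Stein factorization handles non-surjective $f$. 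What each approach buys: yours needs only the untwisted input on $\widetilde X$, while the cited references apply the decomposition theorem directly to the polarizable Hodge module attached to the unitary rank-one local system $P_\eta$, so the twist is built in from the start. (Note also that Takegoshi's theorems are stated for $K_X$ twisted by semi-positive, in particular unitarily flat, bundles, so the torsion-freeness and vanishing assertions actually need no covering trick at all.)

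The one step you should not leave as a parenthesis is the passage from the splitting of $\mathbf{R}(f\circ\psi)_*K_{\widetilde X}$ to that of its direct summand $\mathbf{R}f_*(K_X\otimes P_\eta)$. Averaging over $G$ does not obviously produce an equivariant quasi-isomorphism, and ``obstructions to formality of a summand vanish when they do for the whole complex'' is not a theorem as stated. What is true, and what you should prove or cite, is the lemma that in $\DD(coh(\OO_Y))$ any direct summand of an object of the form $\bigoplus_j\F_j[-j]$ (with $\F_j$ coherent sheaves) is again of this form: since $\Hom(\F_i[-i],\F_j[-j])=\Ext^{i-j}(\F_i,\F_j)$ vanishes for $i<j$, the endomorphisms of $\bigoplus_j\F_j[-j]$ with vanishing diagonal components form a nilpotent two-sided ideal, so every idempotent is conjugate to a diagonal one; as the bounded derived category of coherent sheaves is idempotent complete, the summand is $\bigoplus_j\G_j[-j]$ with $\G_j$ a sheaf summand of $\F_j$, necessarily $R^jf_*(K_X\otimes P_\eta)$ here. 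Alternatively, argue as in \cite{saito2} and \cite{pps}: apply the decomposition theorem directly to the summand of $\psi_*$ of the trivial Hodge module corresponding to the character $\eta$, which yields the splitting of $\mathbf{R}f_*(K_X\otimes P_\eta)$ with no equivariance discussion at all. With that step made precise, your argument is complete modulo the same deep input the paper cites.
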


This theorem is due to Koll\'ar in the case when $Y$ is projective. When $Y$ is an analytic space the degeneration of the Leray spectral sequence at $E_2$ and the torsion-freeness 
 are due to Takegoshi \cite{takegoshi}.  Saito \cite{saito2} greatly generalized the results of Koll\'ar, using the
theory of Hodge modules.  Using \cite{saito1} his treatment works also in the analytic setting, as stated in Theorem \ref{decomp} (see also \cite{pps}).  

A standard consequence, proved in \cite{kollar2} Thm. 3.4 (in the algebraic case, however the same proof goes over in the analytic setting) is that the previous statement is still valid replacing the pair $(X,K_X)$ with the pair $(Y,R^jf_*(K_X\otimes P_\eta))$, for any $j\le \dim X-\dim Y$:

\begin{corollary} \label{higher} Let $X$ be a compact K\"ahler manifold and let $Y$,$Z$ be reduced and irreducible analytic spaces. Let $f:X\rightarrow Y$ and $a: Y\rightarrow Z$ proper surjective morphisms. Let $\eta\in\Pic0 X$ and $\beta\in \Pic0 Y$ be torsion line bundles.  Then for all $i$:

\noindent (a) \[R^i(a\circ f)_*(K_X\otimes P_\eta)=\bigoplus_{j}R^ja_*R^{i-j}f_*(K_X\otimes P_\eta)\]

\noindent (b) \[
	{\bf R} a_*(R^if_*(K_X\otimes P_\eta)\otimes P_\beta) \simeq \bigoplus_{j} \bigl( R^j a_* (R^if_*(K_X\otimes P_\eta) \otimes P_\beta)\bigr) [-j]
\]
in the derived category $\DD(coh(\OO_Z))$.

\noindent (c) For all $i$ and $j$ the sheaf  $R^ja_*(R^i f_*(K_X \otimes P_\eta)\otimes P_\beta)$ is torsion-free. In particular, it vanishes for $j > \dim Y - \dim Z$.
\end{corollary}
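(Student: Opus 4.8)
The plan is to obtain all three statements from the Koll\'ar decomposition (Theorem \ref{decomp}), the only non-formal point being (b). First I would dispose of the twist by $P_\beta$: by the projection formula $R^if_*(K_X\otimes P_\eta)\otimes P_\beta\simeq R^if_*\bigl(K_X\otimes P_\eta\otimes f^*P_\beta\bigr)$, and since $\beta$ is torsion the pull-back $f^*P_\beta$ is a torsion (hence numerically trivial) line bundle on $X$, so $P_\eta\otimes f^*P_\beta=P_{\eta'}$ for some torsion $\eta'\in\Pic0 X$. Thus in proving (b) and (c) we may assume $\beta=0$; from now on I write $\mathcal{F}=K_X\otimes P_\eta$ and $\mathcal{F}_i=R^if_*\mathcal{F}$ with $\eta$ torsion. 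For (a) I would apply Theorem \ref{decomp} to $f\colon X\to Y$ to get $\mathbf{R}f_*\mathcal{F}\simeq\bigoplus_j(R^jf_*\mathcal{F})[-j]$ in $\DD(coh(\OO_Y))$, apply $\mathbf{R}a_*$, and take $\mathcal{H}^i$ of both sides; since $\mathcal{H}^i\bigl(\mathbf{R}a_*(R^jf_*\mathcal{F})[-j]\bigr)=R^{i-j}a_*(R^jf_*\mathcal{F})$, statement (a) drops out (no surjectivity is needed for this part).

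For (c): the composite $a\circ f\colon X\to Z$ is surjective, so Theorem \ref{decomp} applied to it gives that every $R^k(a\circ f)_*\mathcal{F}$ is torsion-free. By (a) the sheaf $R^ja_*\mathcal{F}_i$ is a direct summand of $R^{i+j}(a\circ f)_*\mathcal{F}$, hence torsion-free, and the version with the $P_\beta$-twist follows from the reduction above. The vanishing for $j>\dim Y-\dim Z$ is then a formal consequence: a non-zero torsion-free coherent sheaf on the reduced irreducible space $Z$ has support all of $Z$, whereas $R^ja_*\mathcal{G}$ (for any coherent $\mathcal{G}$ on $Y$) is supported on $\{z\in Z:\dim a^{-1}(z)\ge j\}$, which for $j>\dim Y-\dim Z$ is a proper analytic subset of $Z$ by upper semicontinuity of fibre dimension; hence it must be zero.

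The heart of the matter — and the step I expect to be the main obstacle — is (b): one must split $\mathbf{R}a_*\mathcal{F}_i$ as $\bigoplus_j(R^ja_*\mathcal{F}_i)[-j]$ for each \emph{single} index $i$. Merely comparing the decomposition of $\mathbf{R}(a\circ f)_*\mathcal{F}$ coming from $f$ (that is, from (a)) with the one coming directly from $a\circ f$ only shows that the \emph{total} complex $\bigoplus_i\mathbf{R}a_*\mathcal{F}_i[-i]$ is a direct sum of shifted sheaves; this does not force each individual summand to be so, because a direct summand of a formal complex need not be formal. The extra input required is the stability of the Koll\'ar decomposition under a further proper direct image — Koll\'ar \cite{kollar2}, Thm.\ 3.4 — whose proof carries over to the compact K\"ahler setting through Saito's theory of Hodge modules.

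Concretely I would argue as follows. Let $\sigma\colon X'\to X$ be the connected \'etale cyclic cover of degree $n=\mathrm{ord}(\eta)$ attached to $\eta$; then $X'$ is a compact K\"ahler manifold, $K_X\otimes P_\eta$ is a $\ZZ/n$-isotypic summand of $\sigma_*\omega_{X'}=\bigoplus_{k=0}^{n-1}\omega_X\otimes P_\eta^{\otimes k}$, and hence $\mathcal{F}_i$ is the corresponding isotypic summand of $R^i(f\circ\sigma)_*\omega_{X'}=R^if_*(\sigma_*\omega_{X'})$. By Saito's theory (\cite{saito2}, \cite{saito1}; see also \cite{pps}) the sheaf $R^i(f\circ\sigma)_*\omega_{X'}$ is the bottom piece of the Hodge filtration of the de Rham complex of a polarizable pure Hodge module on $Y$, $\ZZ/n$-equivariantly, so $\mathcal{F}_i$ itself underlies such a Hodge module $M_i$ on $Y$. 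Applying Saito's decomposition theorem to the proper morphism $a\colon Y\to Z$, together with strictness of the Hodge filtration under direct image, yields $\mathbf{R}a_*\mathcal{F}_i\simeq\bigoplus_j(R^ja_*\mathcal{F}_i)[-j]$; re-inserting $P_\beta$ completes (b). Thus the single genuinely hard ingredient is this Hodge-theoretic decomposition theorem (equivalently, relative hard Lefschetz); statements (a) and (c) and the reductions are routine bookkeeping around Theorem \ref{decomp}.
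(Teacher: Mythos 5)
Your proposal is correct and takes essentially the same route as the paper: the paper's proof of this corollary is precisely the citation of Koll\'ar's Theorem 3.4 in \cite{kollar2} (whose proof, as you reconstruct for part (b), rests on the Hodge-theoretic machinery of Saito \cite{saito1}, \cite{saito2} in the analytic setting), together with the same observation you make that $P_\eta\otimes f^*P_\beta$ is again a torsion line bundle, so the twist by $P_\beta$ is harmless. Your additional bookkeeping deriving (a) and (c) formally from Theorem \ref{decomp} simply unwinds that citation and is consistent with it.
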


The proof is as Thm. 3.4 of \emph{loc cit} (note
that, under the hypotheses of the Theorem, $P_\eta\otimes f^*P_\beta$ is a torsion line bundle.). Note also that item (iv) of \emph{loc cit}, which makes sense only in the projective case, is not used to prove the other assertions. 
Combining Theorem \ref{vanishing} with Corollary \ref{higher} we obtain 
\begin{theorem}\label{casino} Let $X$ be a compact Kahler manifold, $f:X\rightarrow A$ a morphism to a complex torus, and $\pi: A\rightarrow B$ a homomorphism of complex tori. Let also $\eta\in \Pic0 X$ and $\beta\in \Pic0 B$ be points of finite order. Then, for each $i$ and $j$
\[R^j\pi_*(R^if_*(K_X\otimes P_\eta)\otimes P_\beta)=
\bigoplus_k \sigma_k^*(\G_k)\otimes P_{\gamma_k}^{-1}\]
 where: $\sigma_k:B\rightarrow C_k$ is a surjective morphism of complex tori with connected fibers, each $\G_k$ is a M-regular coherent sheaf supported on a projective subvariety of the complex torus $C_k$, and
 $\gamma_k$ is a point of finite order of $\Pic0 B$. 
In particular $R^j\pi_*(R^if_*(K_X\otimes P_\eta)\otimes P_\beta)$ is a GV sheaf on $B$.
\end{theorem}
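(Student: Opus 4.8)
The plan is to reduce Theorem \ref{casino} to the already-available Theorem \ref{vanishing} by factoring the composite $\pi\circ f\colon X\to B$ through a suitable intermediate space and applying Koll\'ar-type base change. First I would use Corollary \ref{higher}(a): writing $g=\pi\circ f\colon X\to B$ (or, more precisely, letting $Y$ be the Stein factorization / image-related intermediate space of $f$ so that the hypotheses of Corollary \ref{higher} literally apply — $f$ need not be surjective, so one factors $f$ as $X\to f(X)\hookrightarrow A$ and considers $\pi$ restricted appropriately), one has
\[
R^k g_*(K_X\otimes P_\eta)=\bigoplus_{i+j=k} R^j\pi_*\bigl(R^if_*(K_X\otimes P_\eta)\bigr).
\]
Tensoring by the torsion bundle $P_\beta$ and using the projection formula, the term $R^j\pi_*(R^if_*(K_X\otimes P_\eta)\otimes P_\beta)$ that we want to understand appears as a direct summand of $R^k g_*(K_X\otimes P_\eta\otimes f^*\pi^*P_\beta)$ for $k=i+j$.

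Next I would observe that $\eta'=\eta\otimes f^*\pi^*P_\beta$ is again a point of finite order of $\Pic0 X$ (here is where we use that $\beta$ is torsion, as noted after Corollary \ref{higher}). Therefore Theorem \ref{vanishing}, applied to the morphism $g=\pi\circ f\colon X\to B$ and the torsion point $\eta'$, gives a Chen-Jiang decomposition
\[
R^k g_*(K_X\otimes P_{\eta'})=\bigoplus_k \sigma_k^*(\G_k)\otimes P_{\gamma_k}^{-1}
\]
with $\sigma_k\colon B\to C_k$ surjective with connected fibers, $\G_k$ M-regular and supported on a projective subvariety of $C_k$, and $\gamma_k$ torsion. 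The remaining point is that the summand $R^j\pi_*(R^if_*(K_X\otimes P_\eta)\otimes P_\beta)$, being a direct summand of a sheaf of the above form, inherits a decomposition of the same type: a direct summand of a finite direct sum of pullbacks-of-M-regular-twisted-by-torsion is again of that shape, using the uniqueness/canonicity of the Chen-Jiang decomposition recorded in Remark \ref{uniqueness} (equivalently, phrased on the Fourier-Mukai side via Remark \ref{CJFM}: the FM-transform of the dual is a sheaf in the expected degree which splits into torsion and torsion-free parts, and passing to a direct summand preserves this). Finally, GV-ness of $R^j\pi_*(R^if_*(K_X\otimes P_\eta)\otimes P_\beta)$ follows formally, since a direct summand of a GV sheaf is GV (the support loci only shrink).

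The main obstacle I expect is bookkeeping around the non-surjectivity of $f$ and the precise applicability of Corollary \ref{higher}: one must set up the intermediate analytic space $Y$ correctly (e.g.\ take $Y$ to be the Stein factorization of $X\to f(X)$, or just $f(X)$ with $X\to f(X)$ having connected fibers after Stein factorization) so that both $X\to Y$ and $Y\to \pi(f(X))\to C$ are proper surjective morphisms to reduced irreducible analytic spaces, and then relate $R^j\pi_*$ on $A$ to $R^j$ of the induced map on $Y$. A secondary subtlety is justifying that ``direct summand of $\bigoplus_k \sigma_k^*\G_k\otimes P_{\gamma_k}^{-1}$ is again of this form'': this is where one genuinely invokes the canonicity in Remark \ref{uniqueness} rather than a naive splitting argument, because a priori a summand could mix the pieces. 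Once these two points are handled, the statement is a direct corollary of Theorem \ref{vanishing} and Corollary \ref{higher}, with no substantial new computation.
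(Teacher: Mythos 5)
Your proposal is correct and follows essentially the same route as the paper: apply Corollary \ref{higher}(a) to the composite $\pi\circ f$ together with Theorem \ref{vanishing} for the torsion point $\eta$ twisted by the pullback of $P_\beta$, and then use the uniqueness/Fourier--Mukai characterization of the Chen--Jiang decomposition (Remarks \ref{CJFM} and \ref{uniqueness}) to see that each summand $R^j\pi_*(R^if_*(K_X\otimes P_\eta)\otimes P_\beta)$ inherits a decomposition of the required form. The bookkeeping you flag about non-surjectivity of $f$ (replacing $A$, $B$ by the images and Stein factorizations) is a genuine but routine point that the paper's own proof silently elides, so it does not constitute a difference in approach.
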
 
\proof By Theorem \ref{vanishing} and Corollary \ref{higher}(a) 
\begin{equation}\label{intermediate}R^{i+j}(\pi\circ f)_*(K_X\otimes P_\eta\otimes f^*P_\beta)=
\bigoplus_{h+l=i+j}R^h\pi_*(R^lf_*(K_X\otimes P_\eta)\otimes P_\beta)=\bigoplus_k \sigma_k^*(\F_k)\otimes P_{\alpha_k}^{-1}
\end{equation}
We have to prove that the summands of the Chen-Jiang decomposition on the right split in such a way to provide Chen-Jiang decompositions of the individual summands in the middle. This follows from the uniqueness and Fourier-Mukai-theoretic meaning of the Chen-Jiang decomposition (Remarks \ref{CJFM} and \ref{uniqueness}). 
\endproof

\section{Background material: components of  cohomological support loci}  
\subsection{Components of $\mathbf{V^i(K_X)}$} How do components of $V^i(K_X)$ arise? Recalling the notation of the Introduction we have
\begin{equation}\label{fundamental1'}\xymatrix{X\ar[r]^{alb_X}\ar[rd]^f& \mathrm{Alb}\, X\ar[d]^\pi\\
&B\\}
\end{equation}
As we know from (a) and (c) of the Introduction, 
 a component $W$ of $V^i(K_X)$ is of the form  $f^*(\Pic0 B)+{\eta}$, with $\eta$ a point of finite order of $\Pic0 X$. This means that a point $\alpha \in \Pic0 X$  belongs to $W$ if and only if $P_\alpha=P_{ \eta}\otimes f^*P_\beta$ for some $\beta\in \Pic0 B$. Hence, in the notation of \S1,
\[f^*\Pic0B=V^i_f(K_X\otimes P_{\eta})\] 
By  Koll\`ar decomposition (Theorem \ref{decomp}) and projection formula
\[V^i_f(K_X\otimes P_{ \eta})=f^*V^0(R^if_*(K_X\otimes P_{ \eta}))\cup f^* V^1(R^{i-1}f_*(K_X\otimes P_{ \eta}))\cup\cdots \]
By Theorem \ref{vanishing}, all loci in the right hand side  are proper subvarieties of $\Pic0 B$ except for the first one. It follows that
\[W=f^*V^0(R^if_*(K_X\otimes P_{\eta}))\]
By (b) of Theorem \ref{decomp} $R^if_*(K_X\otimes P_{ \eta})$ vanishes for $\dim X-\dim f(X)<i$. This proves   the basic inequality (\ref{fundamental2}):
\[\dim X-\dim f(X)\ge i \>.\]
 Summarizing, so far we got that: \\
\emph{a component $W$ of $V^i(K_X)$ is always of the form 
\[W=f^*V^0(R^if_*(K_X\otimes P_{\eta}))+\eta=f^*\Pic0 B+\eta\]
where $f:X\rightarrow B$ is a morphism to a complex torus such that $\dim X-\dim f(X)\ge i$.}

Next, we consider the Stein factorization of the morphism $f$ of (\ref{fundamental1'})
\begin{equation}\label{fundamental3}\xymatrix{X\ar[r]^{alb_X}\ar[d]^g\ar[rd]^f& \mathrm{Alb}\, X\ar[d]^\pi\\ Y\ar[r]^a
&B\\}
\end{equation}

\begin{lemma}\label{pic} In the above setting $B$ must be $\mathrm{Alb}\,  Y$ and, up to translation,  $a=alb_Y$. If follows that:
all components of $V^i(K_X)$ are  translates of subtori of the form
\[ g^*V^0(
R^ig_*(K_X\otimes P_{\eta}))+\eta=g^*\Pic0 Y+\eta\]
 for  pairs $(g,\eta)$ such that:\\ 
-  $g:X\rightarrow Y$ is an irregular fibration with $\dim X-\dim Y\ge i$ ;\\
-   $\eta$ is a torsion point of $\Pic0 X$ such that $V^0(R^ig_*(K_X\otimes P_{\eta}))=\Pic0 Y$, i.e. $\chi(R^ig_*(K_X\otimes P_{\eta}))>0$.

\noindent Conversely, given a pair $(g,\eta)$ as above, $g^*\Pic0 Y+\eta$ is contained in $V^i(K_X)$.
\end{lemma}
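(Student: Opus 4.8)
The plan is to deduce everything from the Stein factorization diagram (\ref{fundamental3}) together with the two structural inputs already in hand: the analysis of components of $V^i(K_X)$ recalled just before the Lemma, and Koll\'ar/Takegoshi decomposition (Theorem \ref{decomp}) and Hacon generic vanishing (Theorem \ref{vanishing}). First I would explain why $B = \mathrm{Alb}\, Y$ up to translation. By the preceding discussion, $W = f^*\Pic0 B + \eta$ and $W = f^*V^0(R^if_*(K_X\otimes P_\eta))+\eta$ with $\dim X - \dim f(X)\ge i$; the key point is that $f$ factors as $a\circ g$ with $g:X\to Y$ a fibration (connected fibers) and $a:Y\to B$ finite onto its image. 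Since $g$ has connected fibers, $g^*:\Pic0 Y\to \Pic0 X$ is injective and, more importantly, $g_*\OO_X = \OO_Y$, so $\Pic0$ and the Albanese are insensitive to replacing $X$ by $Y$ in the relevant sense: the composition $X\xrightarrow{alb_X}\mathrm{Alb}\, X\xrightarrow{\pi} B$ must factor through $alb_Y$, because $alb_Y$ is the universal morphism from $Y$ to a torus and $B$ is a torus receiving the map $a$ from $Y$. To get that $a$ is (a translate of) $alb_Y$ itself, and not merely that it factors through it, I would use that $T = f^*\Pic0 B = g^*a^*\Pic0 B$ is by construction the maximal subtorus over which $W$ is a translate, i.e. $a^*\Pic0 B = \Pic0 Y$; combined with $a$ finite onto its image and $Y$ normal, this forces $B = \mathrm{Alb}\, Y$ and $a = alb_Y$ up to translation. (Here one uses that $Y$ is of maximal Albanese dimension — which is exactly what "irregular fibration" encodes — precisely because $a$ is generically finite.)

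Once $B = \mathrm{Alb}\, Y$, the description of components is essentially a rewriting. From the preceding paragraph $W = f^*\Pic0 B + \eta = g^*\Pic0 Y + \eta$, and from $W = f^*V^0(R^if_*(K_X\otimes P_\eta)) + \eta$ one has $V^0(R^if_*(K_X\otimes P_\eta)) = \Pic0 B$ as a subvariety of $\Pic0 B = \Pic0 Y$. I would then transport this to the statement in terms of $g$: using Corollary \ref{higher}(a) with $a:Y\to B$ finite, so that $R^ja_* = 0$ for $j>0$ and $a_*$ is exact on the torsion-free sheaves involved, one gets $R^if_*(K_X\otimes P_\eta) = a_*R^ig_*(K_X\otimes P_\eta)$, hence $V^0_f(R^if_*(K_X\otimes P_\eta)) = a^*{}^{-1}$-pullback matches $V^0(R^ig_*(K_X\otimes P_\eta))$ under $a^*:\Pic0 B\xrightarrow{\sim}\Pic0 Y$. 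Therefore $V^0(R^ig_*(K_X\otimes P_\eta)) = \Pic0 Y$, equivalently (by generic vanishing, Theorem \ref{vanishing}, $R^ig_*(K_X\otimes P_\eta)$ is GV and Corollary \ref{non-vanishing} applies) $\chi(R^ig_*(K_X\otimes P_\eta))>0$. The inequality $\dim X - \dim Y\ge i$ is just $\dim X - \dim f(X)\ge i$ since $a$ is generically finite. That $\eta$ may be taken torsion is item (c) of the Introduction.

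For the converse, given $(g,\eta)$ with $g$ an irregular fibration, $\dim X-\dim Y\ge i$, $\eta$ torsion and $\chi(R^ig_*(K_X\otimes P_\eta))>0$, I must show $g^*\Pic0 Y + \eta\subset V^i(K_X)$. This is where I expect the small amount of actual work: the point is that for $\beta\in\Pic0 Y$ we want $h^i(X, K_X\otimes P_\eta\otimes g^*P_\beta)>0$. By Koll\'ar decomposition (Theorem \ref{decomp}) applied to $g$, $H^i(X, K_X\otimes P_\eta\otimes g^*P_\beta) = \bigoplus_{p+q=i} H^p(Y, R^qg_*(K_X\otimes P_\eta)\otimes P_\beta)$, and the $q=i$, $p=0$ summand is $H^0(Y, R^ig_*(K_X\otimes P_\eta)\otimes P_\beta)$, which is nonzero for all $\beta$ because $\chi>0$ means $V^0(R^ig_*(K_X\otimes P_\eta)) = \Pic0 Y$ by Corollary \ref{non-vanishing}(b) (the sheaf is GV by Theorem \ref{vanishing}). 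Hence $\eta\otimes g^*\beta\in V^i(K_X)$ for every $\beta$, i.e. $g^*\Pic0 Y+\eta\subset V^i(K_X)$, as desired. The only genuine obstacle in the whole argument is the first paragraph — pinning down that $a$ is exactly $alb_Y$ and that $Y$ has maximal Albanese dimension — and that is handled by the universal property of the Albanese together with the maximality built into $T = f^*\Pic0 B$; everything after that is bookkeeping with the decomposition theorems and generic vanishing.
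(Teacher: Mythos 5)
Your converse direction and the bookkeeping with Koll\'ar's decomposition are fine and agree with the paper. The genuine gap is in your first paragraph, at the step ``$T=f^*\Pic0 B$ is by construction the maximal subtorus over which $W$ is a translate, i.e. $a^*\Pic0 B=\Pic0 Y$.'' This inference is circular: by construction $T$ is merely the subtorus underlying the component $W$, and maximality of $W$ as a component only says that \emph{if} $g^*\Pic0 Y+\eta$ is contained in $V^i(K_X)$ then it equals $W$; it gives no reason why the containment should hold, and a priori the isogeny-type map $\sigma:\mathrm{Alb}\,Y\rightarrow B$ could have positive-dimensional kernel (think of $Y$ a genus-$2$ curve mapping finitely onto an elliptic curve $B$: $a$ finite, $Y$ of maximal Albanese dimension, yet $\Pic0 Y\neq a^*\Pic0 B$). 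Your second paragraph then silently assumes $a^*:\Pic0 B\rightarrow\Pic0 Y$ is an isomorphism, which is exactly the assertion to be proved; knowing $V^0(R^if_*(K_X\otimes P_\eta))=\Pic0 B$ only gives $V^0(R^ig_*(K_X\otimes P_\eta))\supseteq \sigma^*\Pic0 B$, not $=\Pic0 Y$.

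This is precisely where the paper does real work. It shows that $V^0(R^i{f_Y}_*(K_X\otimes P_\eta))=\Pic0 Y$ (where $f_Y=alb_Y\circ g$): if not, $\sigma^*\Pic0 B$ would be a component of $V^0$ of some codimension $j>0$, hence by Ein--Lazarsfeld (Corollary \ref{el}(a), here a consequence of the Chen--Jiang decomposition) also a $j$-codimensional component of $V^j(R^i{f_Y}_*(K_X\otimes P_\eta))$; pushing forward by the finite map $a$ (so $R^if_*(K_X\otimes P_\eta)=a_*R^ig_*(K_X\otimes P_\eta)$) one gets $V^j(R^if_*(K_X\otimes P_\eta))=\Pic0 B$ with $j>0$, contradicting the generic vanishing theorem (Theorem \ref{vanishing}) on the torus $B$. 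Only after this does one conclude $W=g^*\Pic0 Y+\eta$, hence $\sigma^*$ is an isomorphism and $B=\mathrm{Alb}\,Y$, $a=alb_Y$ up to translation. Your proposal contains no substitute for this generic-vanishing argument, so as written it does not prove the first (and main) assertion of the Lemma.
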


\proof  Since both maps $a$ and $\pi$  factor through $\mathrm{Alb}\, Y$,
 diagram (\ref{fundamental3}) is factorised as follows
 \begin{equation}\label{fundamental4}\xymatrix{X\ar[r]^{alb_X}\ar[d]^g\ar[rd]^{f_Y}\ar@/_3pc/[ddr]_f& \mathrm{Alb}\, X\ar[d]^{\pi_{\mathrm{Alb}\,Y}}\ar@/^3pc/[dd]^\pi\\ Y\ar[r]^{alb_Y}\ar[rd]^a
&\mathrm{Alb}\, Y\ar[d]^\sigma\\
&B\\}
\end{equation}
By Corollary \ref{higher}(c) $R^h{{alb}_Y}_*(R^i{g}_*(K_X\otimes P_{\eta}))=0$ for $h>0$. Therefore $R^i{f_Y}_*(K_X\otimes P_{\eta})={alb_Y}_*R^ig_*(K_X\otimes P_{\eta})$. Hence, by Theorem \ref{vanishing} and an easy Leray spectral sequence $R^ig_*(K_X\otimes P_{\eta})$ is a GV sheaf (with respect to $alb_Y$) and $\chi(R^ig_*(K_X\otimes P_{\eta})=\chi(R^i{f_Y}_*(K_X\otimes P_{\eta}))\ge 0$. We claim that the strict inequality holds, that is: $V^0(R^i {f_Y}_*(K_X\otimes P_{\eta}))=\Pic0 Y$. This implies that $g^*(\Pic0 Y)+\eta$ is contained in $V^i(K_X)$ and contains the component $W$, hence they must be  equal. Moreover $\Pic0B=\Pic0Y$. Therefore the claim proves the Lemma.

To prove what claimed we argue as follows. We know that  $V^0(R^if_*(K_X\otimes P_\eta))=\Pic0 B$.  If $V^0(R^i {f_Y}_*(K_X\otimes P_{\eta}))$ is strictly contained in $\Pic0 Y$ then $\sigma^*\Pic0 B$ must be a component of \break $V^0(R^i{f_Y}_*(K_X\otimes P_{\eta}))$, say of codimension $j$. Then we know by Corollary \ref{el} that $\sigma^*\Pic0 B$ is  also a component of $V^j(R^i{f_Y}_*(K_X\otimes P_{\eta}))=V^j_{alb_Y}(R^ig_*(K_X\otimes P_\eta))$. But, as the map $a$ is finite, $R^i(a\circ g)_*(K_X\otimes P_{\eta}))=a_*R^i{g}_*(K_X\otimes P_{\eta}))$, as above. Therefore, again by an easy Leray spectral sequence, $\Pic0 B=V^j(R^i(a\circ g)_*(K_X\otimes P_{\eta}))$, hence $R^i(a\circ g)_*(K_X\otimes P_{\eta})$ is not a GV sheaf on the complex torus $B$, in contradiction with Theorem \ref{vanishing}.

The last assertion follows by the Koll\'ar decomposition.
\endproof
\subsection{Components of $\mathbf{V^i(R^jg_*(K_X\otimes P_\eta))}$} The previous Lemma relates the loci $V^i(K_X)$ to the loci $V^0(R^if_*(K_X\otimes P_\eta))$ for suitable morphisms to complex tori $f:X\rightarrow B=\mathrm{Alb}\, Y$ or, what is the same, to the loci $V^0(R^ig_*(K_X\otimes P_\eta))$, where in the diagram
\[\xymatrix{X\ar[r]^{alb_X}\ar[d]^g\ar[rd]^f& \mathrm{Alb}\, X\ar[d]^\pi\\ Y\ar[r]^{alb_Y}
&\mathrm{Alb}\, Y\\}\]
$g$ is the Stein factorization of the morphism $f$. More generally, it is  useful to describe in a similar way the components of the cohomological support loci $V^r(R^ig_*(K_X\otimes P_\eta))$, for all $i$ 
and $r$. This is the content of part (a) of the following Lemma. Part (b) provides and explicit description of extremal components.

\begin{lemma}\label{pic2} 
In the above notation, let $\eta$ be a point of finite order of $\Pic0 X$. 

\noindent (a) For all integers $r$ and $i$ the components of $V^r(R^ig_*(K_X\otimes P_\eta))$ 
are of the form
\begin{equation}\label{form} h^*V^0(
R^rh_*(R^ig_*(K_X\otimes P_{\eta})\otimes P_\alpha)+\alpha=h^*\Pic0 Z+\alpha
\end{equation}
 for pairs $(h,\alpha)$ such that:\\ 
-  $h:Y\rightarrow Z$ is an irregular fibration with $\dim Y-\dim Z\ge r$;\\
-   $\alpha$ is a point of finite order of $\Pic0 Y$ such that $V^0(R^rh_*(R^ig_*(K_X\otimes P_{\eta})\otimes P_\alpha))=\Pic0 Z$, i.e. $\chi(R^rh_*(R^ig_*(K_X\otimes P_{\eta})\otimes P_\alpha))>0$.

\noindent Conversely, given a pair $(h,\alpha)$ as above, $h^*\Pic0 Z+\alpha$ is contained in $V^r(R^ig_*(K_X\otimes P_\eta))$. 

\noindent (b) Extremal components for $R^ig_*(K_X\otimes P_\eta))$, i.e. components of $V^r(R^ig_*(K_X\otimes P_\eta))$ of codimension $r$ for some $r$, are of the form \emph{(\ref{form})} for pairs $(h,\alpha)$, where $h:Y\rightarrow Z$ is an irregular fibration such that $\dim Y-\dim Z=r=q(Y)-q(Z)$ \emph{(here $q(Y)$ and $q(Z)$ denote $\dim \mathrm{Alb}\, Y$ and $\dim \mathrm{Alb}\, Z$)} and $\alpha$ is such that $\chi(R^rh_*(R^ig_*(K_X\otimes P_{\eta})\otimes P_\alpha))>0$.

\noindent Conversely, given a pair $(h,\alpha)$ as above, $h^*\Pic0 Z+\alpha$ is \emph{a component}  of  $V^r(R^ig_*(K_X\otimes P_\eta))$ of codimension $r$.

\end{lemma}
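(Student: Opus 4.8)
The plan is to run the argument of Lemma \ref{pic} ``one level up'', replacing the pair $(X,K_X)$ and the irregular fibration $g$ by the pair $(Y,\mathcal R)$ with $\mathcal R:=R^ig_*(K_X\otimes P_\eta)$, and the fibration $h$. This is legitimate because $\mathcal R$, and all the further direct images of it that occur, stay inside the generic vanishing / Chen--Jiang framework: by Corollary \ref{higher}(a), $R^rh_*(\mathcal R\otimes P_\alpha)$ is a direct summand of $R^{i+r}(h\circ g)_*(K_X\otimes P_{\eta'})$ for a suitable torsion point $\eta'$, and such sheaves, pushed forward to the relevant complex torus, are controlled by Theorem \ref{vanishing} and Theorem \ref{casino}; in particular all sheaves we handle are GV and admit a canonical Chen--Jiang decomposition, so the Ein--Lazarsfeld statements of Corollary \ref{el} apply to them.

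\textbf{Part (a).} First I would push $\mathcal R$ to $\mathrm{Alb}\, Y$. Since $Y$ has maximal Albanese dimension (being the base of an irregular fibration), Corollary \ref{higher}(c) gives $R^{>0}(alb_Y)_*\mathcal R=0$, so $(alb_Y)_*\mathcal R=R^i(alb_Y\circ g)_*(K_X\otimes P_\eta)$ is a GV sheaf on $\mathrm{Alb}\, Y$ with a Chen--Jiang decomposition by Theorem \ref{vanishing}, and $V^r_{alb_Y}(\mathcal R)=V^r((alb_Y)_*\mathcal R)$ inside $\Pic0 Y=alb_Y^*\Pic0\mathrm{Alb}\, Y$. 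Applying Example \ref{pullback} to the Chen--Jiang summands (equivalently, properties (a) and (c) of the Introduction), every component $W$ of $V^r_{alb_Y}(\mathcal R)$ has the form $W=\psi^*\Pic0 B'+\alpha$ for a surjective homomorphism $\psi\colon\mathrm{Alb}\, Y\to B'$ and a point $\alpha$ of finite order. Put $\phi=\psi\circ alb_Y\colon Y\to B'$ and take its Stein factorization $Y\xrightarrow{h}Z\xrightarrow{b}B'$. Then $h$ has connected fibers, $Z$ is normal, and $Z$ has maximal Albanese dimension because $b$ is finite (so $Z\to alb_Z(Z)$ is finite); moreover $R^rh_*(\mathcal R\otimes P_\alpha)\neq 0$ forces $\dim Y-\dim Z\ge r$ by Corollary \ref{higher}(c). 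The crux — and the step I expect to be the main obstacle — is to prove that $b$ is the Albanese map of $Z$, so that $B'=\mathrm{Alb}\, Z$ and $W=h^*\Pic0 Z+\alpha$, together with $V^0_{alb_Z}(R^rh_*(\mathcal R\otimes P_\alpha))=\Pic0 Z$, i.e. $\chi(R^rh_*(\mathcal R\otimes P_\alpha))>0$. This is the exact analogue of the claim established inside the proof of Lemma \ref{pic}, and I would argue it the same way, by contradiction: if $b$ factored through a proper quotient $\sigma\colon\mathrm{Alb}\, Z\to B'$, or if that $V^0$ were a proper subvariety, then $\sigma^*\Pic0 B'$ (or some component through it) would be a component of $V^0_{alb_Z}(R^rh_*(\mathcal R\otimes P_\alpha))$ of some positive codimension $j$; by Corollary \ref{el}(a) it would then be a codimension-$j$ component of $V^j_{alb_Z}(R^rh_*(\mathcal R\otimes P_\alpha))$, i.e. extremal; pushing $R^rh_*(\mathcal R\otimes P_\alpha)$ down to $B'$ along the finite map $b$ and degenerating Leray (Corollary \ref{higher}(b)) this would force $V^j$ of the resulting direct image on $B'$ to be all of $\Pic0 B'$, contradicting the GV conclusion of Theorem \ref{casino}. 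Hence $b=alb_Z$ up to translation, $W=h^*V^0(R^rh_*(\mathcal R\otimes P_\alpha))+\alpha=h^*\Pic0 Z+\alpha$, and the GV property forces $\chi>0$. The converse inclusion — that any $(h,\alpha)$ as in the statement has $h^*\Pic0 Z+\alpha\subseteq V^r_{alb_Y}(\mathcal R)$ — is immediate from the Koll\'ar decomposition and the projection formula.

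\textbf{Part (b).} By Remark \ref{important}(b) the extremal components of $\mathcal R$ (computed through $alb_Y$) correspond to the non-trivial Chen--Jiang summands $\pi_k^*\F_k\otimes P_{\alpha_k}^{-1}$ of $(alb_Y)_*\mathcal R=R^i(alb_Y\circ g)_*(K_X\otimes P_\eta)$, and by Corollary \ref{el}(b) each component $\pi_k^*\Pic0 B_k+\alpha_k$ satisfies $\dim f(X)-\dim\pi_k(f(X))=\dim\mathrm{Alb}\, Y-\dim B_k$, where $f=alb_Y\circ g$; its codimension in $\Pic0 Y$ is the common value $r:=\dim\mathrm{Alb}\, Y-\dim B_k$. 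Since $g$ is surjective, $f(X)=alb_Y(Y)$ has dimension $\dim Y$; writing $Y\xrightarrow{h}Z\xrightarrow{b}B_k$ for the Stein factorization of $\pi_k\circ alb_Y$, the image $\pi_k(f(X))$ has dimension $\dim Z$, and by part (a) $B_k\cong\mathrm{Alb}\, Z$, so $\dim B_k=q(Z)$. Hence $\dim Y-\dim Z=r=q(Y)-q(Z)$, and $\chi(R^rh_*(\mathcal R\otimes P_\alpha))>0$ by part (a). Conversely, given an irregular fibration $h\colon Y\to Z$ with $\dim Y-\dim Z=r=q(Y)-q(Z)$ and $\chi(R^rh_*(\mathcal R\otimes P_\alpha))>0$, the subtorus $h^*\Pic0 Z$ has codimension exactly $q(Y)-q(Z)=r$ in $\Pic0 Y$; since $h^*\Pic0 Z+\alpha\subseteq V^r_{alb_Y}(\mathcal R)$ by the converse in part (a), and $\mathcal R$ is GV with respect to $alb_Y$ (so every component of $V^r_{alb_Y}(\mathcal R)$ has codimension $\ge r$), this forces $h^*\Pic0 Z+\alpha$ to be a component of $V^r_{alb_Y}(\mathcal R)$ of codimension exactly $r$, i.e. an extremal one.
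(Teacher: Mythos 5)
Your overall strategy -- rerunning the proof of Lemma \ref{pic} with the pair $(Y,\mathcal{R})$, $\mathcal{R}=R^ig_*(K_X\otimes P_\eta)$, in place of $(X,K_X)$ -- is exactly the paper's ("all ingredients of Lemma \ref{pic} hold for these sheaves and the argument goes through"), and your identification $B'=\mathrm{Alb}\,Z$, the converse inclusions, and part (b) are in line with it. But there is a genuine gap at the very first step of part (a): the claim that \emph{every} component $W$ of $V^r_{alb_Y}(\mathcal{R})$ is a torsion translate $\psi^*\Pic0 B'+\alpha$ of a subtorus does not follow from "Example \ref{pullback} applied to the Chen--Jiang summands". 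Writing $(alb_Y)_*\mathcal{R}=\bigoplus_k\pi_k^*\F_k\otimes P_{\alpha_k}^{-1}$ and $m_k=q(Y)-\dim B_k$, the computation of Example \ref{pullback} gives $V^r(\pi_k^*\F_k\otimes P_{\alpha_k}^{-1})=\pi_k^*\Pic0 B_k+\alpha_k$ only for $r\le m_k$; for $r>m_k$ it gives $\alpha_k+\pi_k^*\bigl(\bigcup_{u\ge r-m_k}V^u(B_k,\F_k)\bigr)$, and M-regularity of $\F_k$ only bounds the codimension of the loci $V^u(B_k,\F_k)$, $u>0$ -- it says nothing about their being translates of subtori by torsion points. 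So your argument yields the subtorus property only for components equal to the support tori of the Chen--Jiang summands (in particular for all extremal components, which is why part (b) is unaffected), not for arbitrary components as required in (a). Nor is the statement "equivalent to properties (a) and (c) of the Introduction": a component of $V^r(\mathcal{R})$ need not be a component of $V^{r+i}(K_X\otimes P_\eta)$, since in the Koll\'ar/Leray splitting the other direct images can contribute on larger loci.

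This is exactly the point where the paper brings in an extra input: the subtorus theorem does hold for all components of $V^r(R^ig_*(K_X\otimes P_\eta))$, but its proof goes through the multiplicity loci $V^{r+i}_m(K_X\otimes P_\eta)=\{\alpha\>|\>h^{r+i}(K_X\otimes P_\eta\otimes P_\alpha)\ge m\}$: by the Green--Lazarsfeld and Simpson--Wang theorems these are unions of torsion translates of subtori, and Koll\'ar's decomposition shows that a component of $V^r(R^ig_*(K_X\otimes P_\eta))$ is a component of $V^{r+i}_m(K_X\otimes P_\eta)$ for a suitable $m$ (this is \cite{hp2}, Thm.~2.2(b), cited and sketched in a footnote to the paper's proof). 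Once that statement is granted, the remainder of your argument -- Stein factorization, the contradiction via Corollary \ref{el}(a), Corollary \ref{higher} and Theorem \ref{casino} showing $B'=\mathrm{Alb}\,Z$ and $\chi(R^rh_*(\mathcal{R}\otimes P_\alpha))>0$, the converse via Koll\'ar decomposition plus projection formula, and the dimension count in (b) using Corollary \ref{el}(b) and finiteness of $alb_Y$ -- does go through and coincides in substance with the paper's proof.
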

\begin{proof} 
We recall that the subtorus Theorem (namely items (a) and (c) of the Introduction) holds as well for the sheaves $R^ig_*(K_X\otimes P_\eta)$ (or, equivalently  for the sheaves $R^if_*(K_X\otimes P_\eta)$). For extremal components this follows at once from
 Theorem \ref{vanishing} and Example \ref{pullback}, but in fact it holds more generally for all components,  see e.g. \cite{hp2} Thm 2.2(b)\footnote{in brief: one can define more generally loci $V^i_{m}(K_X\otimes P_\eta)=\{\alpha\in \Pic0 X\>|\> h^i(K_X\otimes P_\eta\otimes P_\alpha)\ge m\}$ and the Theorems of Green-Lazarsfeld and Simpson-Wang prove as well  that all components $V^i_{m}(K_X\otimes P_\eta)$ are translates of subtori by points of finite order. Then one proves, using Koll\'ar's decomposition, that a component of $V^r(R^i g_*(K_X\otimes P_\eta))$ is also a component of $V^{r+i}_m(K_X\otimes P_\eta)$ for some $m$}. Thus all ingredients for the proof of Lemma \ref{pic} (vanishing theorem, Koll\`ar decomposition, subtorus theorem) hold for the sheaves $R^ig_*(K_X\otimes P_\eta)$ as well, and the argument goes trough without any change, proving (a).
 
 \noindent (b) A component of $V^r(R^ig_*(K_X\otimes P_\eta))$ is extremal if and only if, in the notation of the statement, $ q(Y)- q(Z)=r$. From (a) we have also the basic inequality $\dim Y-\dim Z\ge r$. Since the map $alb_Y$ is finite, also its restriction to  the fibers of $h$ must be finite. Hence the fibers of $h$ surject on the fibers of the homomorphism $\mathrm{Alb}\, Y\rightarrow \mathrm{Alb}\, Z$. Therefore $\dim Y-\dim Z=q(Y)-\dim q(Z)=r$.  Conversely, since $R^ig_*(K_X\otimes P_\eta)$ is a GV sheaf, the codimension of a component of  $V^r(R^ig_*(K_X\otimes P_\eta))$ can't be smaller than $r$. Therefore for every pair $(h,\alpha)$ as in the statement the translated subtorus $h^*\Pic0 Z+\alpha$ is a component of $V^r(R^ig_*(K_X\otimes P_\eta))$, in fact an extremal one. 
 \end{proof}
 
 \section{Background material: comparing Chen-Jiang decompositions}
 
 In view of Lemma \ref{pic2}(b), to study extremal components for sheaves  $R^ig_*(K_X\otimes P_\eta)$ as above, we are led to consider commutative diagrams as follows
 \begin{equation}\label{fundamental6}
 \xymatrix{X\ar[r]^{alb_X}\ar[d]^g\ar[rd]^{f}& \mathrm{Alb}\, X\ar[d]^{\pi_{\mathrm{Alb}\,Y}}\\ Y\ar[r]^{alb_Y}\ar[d]^{h}\ar[rd]^{l}
&\mathrm{Alb}\, Y\ar[d]^{\pi}\\
Z\ar[r]^{a}&B\\} 
\end{equation}
where $X$ is compact K\"ahler, the vertical maps on the right are morphisms of complex tori with connected fibers,  the vertical maps on the left are Stein factorizations, and the lower part of the diagram is such that
\[\dim Y-\dim Z=q(Y)-\dim B:=m \>\]
(from Lemma \ref{pic2}(a) it follows also that $B=\mathrm{Alb}\, Z_B$ and $a_B$ is $alb_{Z_B}$ but we will stick to the notation of (\ref{fundamental8})). Given a point of finite order $\eta\in \Pic0 X$,  by Theorem \ref{vanishing} we have the Chen-Jiang decomposition
\begin{equation}\label{first}R^if_*(K_X\otimes P_\eta)=\bigoplus_k \pi_k^*(\F_k)\otimes P_{\alpha_k}^{-1}
\end{equation}
where each $\pi_k:\mathrm{Alb}\, Yj\rightarrow B_k$ is a surjective morphism of complex tori with connected fibers. Moreover, given a point of finite order $\alpha\in \Pic0 Y$, by Theorem \ref{casino}, we have the Chen-Jiang decomposition
\begin{equation}\label{second}R^m\pi_*(R^if_*(K_X\otimes P_\eta)\otimes P_\alpha)=
\bigoplus_h \sigma_h^*(\G_h)\otimes P_{\gamma_h}^{-1}
\end{equation}
 where  $\sigma_h:B\rightarrow C_k$  are surjective morphisms of complex tori with connected fibers. We would like to compare the two decompositions. This is the content
 of the following useful property, due to Chen and Jiang (\cite{chen-jiang} Lemma 3.7).

 \begin{lemma}\label{final?} We adopt the notation above. Then 
 \begin{equation}\label{third}
 R^m\pi_*(R^if_*(K_X\otimes P_\eta)\otimes P_\alpha)=\bigoplus_{\pi_k^*\Pic0 B_k+\alpha_k\subset \pi^*\Pic0 B+\alpha} R^m\pi_*(\pi_k^*(\F_k)\otimes P_{\alpha_k}^{-1}\otimes P_\alpha)
 \end{equation}
 and (\ref{third}) coincides with the Chen-Jiang decomposition (\ref{second}) for $j=m$. In particular, for each summand $\pi_k^*\F_k\otimes P_{\alpha_k}^{-1}$ in (\ref{first}), we have that  $R^m\pi_*(\pi_k^*\F_k\otimes P_{\alpha_k}^{-1}\otimes P_\alpha)\ne 0$ if and only if \break $\pi_k^*\Pic0 B_k+\alpha_k\subset \pi^*\Pic0 B+\alpha$. If this is the case, up to twist with a line bundle in $\pi^*\Pic0 B$, $\pi_k^*\F_k=\pi^*\sigma_h^*\G_h$ for a certain Chen-Jiang summand $\sigma_h^*\G_h$ of (\ref{second}). 
 \end{lemma}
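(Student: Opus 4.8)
The plan is to apply the functor $R^m\pi_*(-\otimes P_\alpha)$ to the Chen--Jiang decomposition (\ref{first}), to analyse each resulting summand separately, and then to recognize the answer as the decomposition (\ref{second}) via its uniqueness. Since $R^m\pi_*$ commutes with finite direct sums, (\ref{first}) gives at once
\[
R^m\pi_*(R^if_*(K_X\otimes P_\eta)\otimes P_\alpha)=\bigoplus_k R^m\pi_*(\pi_k^*\F_k\otimes P_{\alpha_k}^{-1}\otimes P_\alpha),
\]
so it suffices to understand a single summand. Write $W_k=\pi_k^*\Pic0 B_k+\alpha_k$ and $W=\pi^*\Pic0 B+\alpha$, and recall that $m=q(Y)-\dim B=\dim\ker\pi$.

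\emph{The containment case.} Suppose $W_k\subseteq W$. Then $\pi_k^*\Pic0 B_k\subseteq\pi^*\Pic0 B$, which is equivalent to $\ker\pi\subseteq\ker\pi_k$, so $\pi_k$ factors as $\pi_k=\psi_k\circ\pi$ with $\psi_k\colon B\to B_k$ a surjective homomorphism with connected fibres; and the matching of the two translated subtori forces $P_{\alpha_k}^{-1}\otimes P_\alpha=\pi^*P_{\beta_k}$ for some torsion $\beta_k\in\Pic0 B$. Since $K_{\mathrm{Alb}\, Y}=\OO$, Theorem \ref{decomp} gives $\mathbf{R}\pi_*\OO_{\mathrm{Alb}\, Y}=\bigoplus_j R^j\pi_*\OO_{\mathrm{Alb}\, Y}[-j]$ with $R^m\pi_*\OO_{\mathrm{Alb}\, Y}=\OO_B$ (top cohomology along the torus bundle $\pi$ with $m$-dimensional fibres), so the projection formula yields
\[
R^m\pi_*(\pi_k^*\F_k\otimes P_{\alpha_k}^{-1}\otimes P_\alpha)=R^m\pi_*\bigl(\pi^*(\psi_k^*\F_k\otimes P_{\beta_k})\bigr)=\psi_k^*\F_k\otimes P_{\beta_k},
\]
which is non-zero and is a Chen--Jiang-type sheaf (pullback via the quotient $\psi_k$ of the $M$-regular $\F_k$, twisted by a torsion line bundle), with support $\psi_k^*\Pic0 B_k+\beta_k$.

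\emph{The non-containment case.} Suppose $W_k\not\subseteq W$; I claim the summand vanishes. Factor $\pi$ through $T:=\mathrm{Alb}\, Y/(\ker\pi\cap\ker\pi_k)$ as $\mathrm{Alb}\, Y\xrightarrow{\rho}T\xrightarrow{q}B$. As $\pi_k$ also factors through $T$, the sheaf $\pi_k^*\F_k$ is pulled back from $T$; applying the projection formula to the torus bundle $\rho$ (fibre $\ker\pi\cap\ker\pi_k$) shows the summand is $0$ whenever $P_{\alpha_k}^{-1}\otimes P_\alpha$ is non-trivial along the fibres of $\rho$, and is otherwise $R^{m-d_0}q_*\bigl((\text{pullback of }\F_k\text{ to }T)\otimes(\text{torsion line bundle})\bigr)$, where $d_0=\dim(\ker\pi\cap\ker\pi_k)$. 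Since $q\colon T\to B$ is a torus bundle with $(m-d_0)$-dimensional fibres, its top direct image is computed fibrewise as $H^{m-d_0}(C',\F_k|_{C'}\otimes M)$, with $C'$ a translate of the subtorus $C:=\pi_k(\ker\pi)\subseteq B_k$ (of dimension $m-d_0$) and $M\in\Pic0 C'$; and in the non-containment situation $C$ has positive dimension. The key point is then: the restriction of an $M$-regular sheaf on $B_k$ to a general positive-dimensional translate of a subtorus is again $M$-regular, hence has no top cohomology after any $\Pic0$-twist (for an $M$-regular sheaf $V^{\dim C'}_{C'}$ is empty); and the translates $\pi_k(F_b)$ of $C$ obtained from the fibres $F_b$ of $\pi$ sweep out all translates of $C$, so the relevant $C'$ may be taken general. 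Hence the summand is $0$.

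Combining the three steps, $R^m\pi_*(R^if_*(K_X\otimes P_\eta)\otimes P_\alpha)=\bigoplus_{W_k\subseteq W}\psi_k^*\F_k\otimes P_{\beta_k}$, which is exactly (\ref{third}); each summand being of Chen--Jiang type, the uniqueness and Fourier--Mukai description of the Chen--Jiang decomposition (Remarks \ref{CJFM} and \ref{uniqueness}) identify this sum with (\ref{second}) for $j=m$, and comparing gives $\pi_k^*\F_k=\pi^*(\psi_k^*\F_k)=\pi^*\sigma_h^*\G_h$ up to a twist in $\pi^*\Pic0 B$, as asserted. The step I expect to be the main obstacle is the vanishing in the non-containment case — specifically, that the restriction of the $M$-regular $\F_k$ to a general positive-dimensional subtorus-translate kills top cohomology. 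The cleanest route, in the spirit of the proof of Theorem \ref{casino}, is Fourier--Mukai-theoretic: by (\ref{identi}) and Theorem \ref{FM} the transform of $(\pi_k^*\F_k\otimes P_{\alpha_k}^{-1})^\vee$ is a torsion sheaf supported on (a translate of) $W_k$, and the operation $R^m\pi_*$ corresponds, via a pushforward counterpart of (\ref{identi}), to a derived restriction to the subtorus $\pi^*\Pic0 B\subseteq\Pic0\mathrm{Alb}\, Y$; the excess-intersection $\mathcal{T}or$-terms that arise when $W_k$ is not contained in the translated subtorus live in strictly lower cohomological degrees, so they contribute only to $R^j\pi_*$ with $j<m$, while the $R^m\pi_*$-part receives a contribution precisely from the $W_k$ lying inside that translated subtorus. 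This simultaneously yields the vanishing and makes transparent that (\ref{third}) is the top piece of (\ref{second}).
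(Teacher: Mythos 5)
Your overall plan (apply $R^m\pi_*((\cdot)\otimes P_\alpha)$ to (\ref{first}) summand by summand and then invoke uniqueness) is reasonable, and the containment case is fine; but the step you yourself single out -- the vanishing of $R^m\pi_*(\pi_k^*\F_k\otimes P_{\alpha_k}^{-1}\otimes P_\alpha)$ when $\pi_k^*\Pic0 B_k+\alpha_k\not\subset\pi^*\Pic0 B+\alpha$ -- is a genuine gap as written, for two reasons. First, it rests on the unproved assertion that the restriction of the M-regular sheaf $\F_k$ to a \emph{general} positive-dimensional subtorus-translate has empty top cohomological support locus (you phrase it as "is again M-regular"); this is not in the paper and is precisely the load-bearing content of the vanishing, so it cannot simply be asserted. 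Second, even granting it, the conclusion does not follow: cohomology and base change in the top degree identifies the fibre of $R^{m-d_0}q_*(\cdot)$ at \emph{every} point $b$ with $H^{m-d_0}(C_b,\F_k|_{C_b}\otimes M_b)$, so vanishing at general $b$ only shows the direct image is a torsion sheaf. Since the fibres of $\pi$ sweep out \emph{all} translates of $C$ (including special ones about which your restriction claim says nothing), the inference "they sweep out all translates, so the relevant $C'$ may be taken general" is backwards; to finish you would need either vanishing on every translate or torsion-freeness of the direct image (which is in fact available through Corollary \ref{higher}(c) via the Stein factorization, but you never invoke it). Your fallback Fourier--Mukai sketch does point at the correct mechanism -- $R^j\pi_*$ of a summand corresponds to $\mathcal{T}or_j$ of its transform against the structure sheaf of the subtorus $\hat\pi(\Pic0 B)$, and the top $\mathcal{T}or_m$ of a torsion-free sheaf on an irreducible translated subtorus is nonzero exactly when that subtorus lies inside $\hat\pi(\Pic0 B)$ -- but as written it relies on a pushforward exchange formula which the paper never states (only the pullback formula (\ref{identi}) is given), and the degree bookkeeping matching $\mathcal{T}or_j$ with $R^j\pi_*$ is asserted rather than carried out.

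For comparison, the paper avoids any fibrewise or restriction computation. It notes that each Chen--Jiang summand of (\ref{second}) gives a codimension-$u$ component of some $V^u$ of $R^m\pi_*(\H\otimes P_\alpha)$, which by Koll\'ar's decomposition and the equality $m=\dim Y-\dim Z=q(Y)-\dim B$ induces an extremal component of $\H=R^if_*(K_X\otimes P_\eta)$; by Remark \ref{important}(b) the sheaves $\pi^*(\cdot)\otimes P_\alpha^{-1}$ built from the summands of (\ref{second}) therefore occur among the Chen--Jiang summands of (\ref{first}), and then applying $R^m\pi_*((\cdot)\otimes P_\alpha)$ to (\ref{first}) and using the uniqueness and Fourier--Mukai description of the decomposition (Remarks \ref{CJFM} and \ref{uniqueness}) twice yields both (\ref{third}) and the non-vanishing criterion. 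So the non-containment vanishing is extracted from uniqueness of the decomposition rather than proved directly; if you want to keep your direct route, you should either make the Fourier--Mukai argument precise or replace the "general restriction of an M-regular sheaf" claim by an actual proof, and in either case supply the torsion-freeness (or all-fibres) input needed to pass from generic to global vanishing.
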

 \begin{proof}  Let us denote $\mathcal H=R^if_*(K_X\otimes P_\eta)$. We know that each factor of the Chen-Jiang decomposition (\ref{second}) corresponds to a component of $V^u(R^m\pi_*(\H\otimes P_\alpha))$ of codimension $u$ in $\Pic0 B$ (including possibly $u=0$). Since $\dim Y-\dim Z=q(Y)-\dim B=m$, by Koll\'ar decomposition such components induce components of $V^{m+u}(\H\otimes P_\alpha)$ of codimension $m+u$ in $\Pic0 Y$, i.e. extremal components for $\H$.  By Remark \ref{important}(b), this means that $\pi^*(\cdot)\otimes P_\alpha^{-1}$ of the Chen-Jiang decomposition (\ref{second})  is part of the Chen-Jiang decomposition (\ref{first}). With this in mind, let us apply $R^m\pi_*((\cdot)\otimes P_\alpha)$ to (\ref{first}). We get  another Chen-Jiang decomposition  of $R^m\pi_*(\G\otimes P_\alpha)$, essentially coinciding with (\ref{second}) (Remark \ref{uniqueness}). Applying $\pi^*(\cdot)\otimes P_\alpha^{-1}$ we obtain a Chen-Jiang decomposition  of a part of (\ref{first}). By uniqueness of Chen-Jiang decompositions (Remark \ref{uniqueness} again), this essentialy coincides with the right hand side of  (\ref{third}). Namely it cannot happen that $R^m\pi_*(\pi_k^*\F_k\otimes P_{\alpha_k}^{-1}\otimes P_\alpha)\ne 0$ if $\pi_k^*\Pic0 B_k+\alpha_k$ is not  contained in $\pi^*\Pic0 B+\alpha$.
 \end{proof}

\section{Proof of Theorem \ref{2}}

As it is technically easier, we prove directly  Theorem \ref{2}, before proving Theorem \ref{1}.

\proof  (a) Let   $g:X\rightarrow Y$ be an irregular fibration with 
\[m:=\dim X-\dim Y>p.\] For all $\eta\in \Pic0 (g)$ the coherent sheaf $R^{m}g_*(K_X\otimes P_\eta)$ is a non-zero and GV. Therefore \break $V^0(R^{m}g_*(K_X\otimes P_\eta))$ is non-empty by non-vanishing (Theorem \ref{non-vanishing}). By Koll\'ar decomposition (Theorem \ref{decomp}) and projection formula we have that
 if $V^0(Y, R^{m}g_*(K_X\otimes P_\eta))$ was positive-dimensional then also $V^{m}(X,K_X\otimes P_\eta)$ would be positive-dimensional, against the definition of $p$. 
 Therefore we are left with the case when $V^0(Y, R^{m}g_*(K_X\otimes P_\eta))$ is zero-dimensional and the statement of (a) will proved as soon as we show that in this case: (i) $Y$ is bimeromorphic to a complex torus, and (ii) $\Pic0 Y=g^*\Pic0 Y$. 
 We prove (ii)  first. 
 By Corollary \ref{el}(a), a $0$-dimensional component of $V^0(R^{m}g_*(K_X\otimes P_\eta))$ is also a    component of $V^{q(Y)}(R^{d(g)}g_*(K_X\otimes P_\eta))$. Again by Koll\'ar decomposition this induces a component of $V^{m+q(Y)}(K_X)$, which is impossible unless $m+q(Y)\le\dim X$. But $m=\dim X-\dim Y$ and $\dim Y\le q(Y)$. Therefore $\dim Y=q(Y)$ and, in conclusion, $m+q(Y)=\dim X$. But, since $V^{\dim X}(K_X)=\{\hat 0\}$, the above is possible only when $\eta\in g^*\Pic0Y$. This proves (ii).

 It remains to  prove (i).  Since the loci $V^i(K_X)$ are bimeromorphic invariants of compact K\"ahler manifolds, after desingularizing $Y$ and replacing $X$ with a suitable bimeromorphic compact Kahler manifold, we can assume that
$Y$ is a compact complex manifold. We can assume also  that $Y$ is K\"ahler (e.g. \cite{catanese2} 1.9). Therefore $R^{m}g_*K_X=K_Y$ (\cite{kollar1} Prop. 7.6. See also  \cite{takegoshi} Th. 6.10(iii) for the analytic setting)  and, by the above, $V^0(K_Y)$ is $0$-dimensional.  But a well known result of Ein-Lazarsfeld (\cite{chen-hacon1}), tells that  this is the case if and only if $Y$ is bimeromorphic to a complex torus. For the reader's convenience, we outline the proof: if $V^0(K_Y)$ is $0$-dimensional then, by Remark \ref{important}, the complex tori $B_k$ corresponding to the Chen-Jiang summands of  ${alb_Y}_*(K_Y)$ are $0$-dimensional. Therefore ${alb_Y}_*(K_Y)$ would be the direct sum of toplogically trivial line bundles on $\mathrm{Alb}\,Y$. But, since $V^{q(Y)}(Y, K_Y)=\{\hat 0\}$ (recall that $q(Y)=\dim Y$), it must be ${alb_Y}_*K_Y=\OO_{\mathrm{Alb}\, Y}$. Hence $alb_Y$ is  bimeromorphic. 

\noindent (b)  Thanks to Lemma \ref{pic} and (a) all positive-dimensional components of $V^p(K_X)$  are translates of subtori of the form $g^*V_0(R^p g_*(K_X\otimes P_\eta))=g^*\Pic0 Y$ for irregular fibrations $g:X\rightarrow Y$ with $\dim X-\dim Y=p$. Therefore the restriction of $P_\eta$ to a general fiber of $g$ must be trivial, that is $\eta$ belongs to $\Pic0 (g)$. It remains to prove that all components  of $\Pic0 (g)$ (respectively: all components of $\Pic0 (g)$ but $g^*\Pic0 Y$ if $Y$ is bimeromorphic to a complex torus)  are as above. 
One proceeds  as in (a). To begin with, we claim that for every irregular fibration $g$ as above, every component  of $\Pic0 (g)-g^*\Pic0 Y$  is contained in $V^p(K_X)$ (hence, as it is easy to see, it is a \emph{component} of $V^p(K_X)$). This follows from Lemma \ref{pic} and (a) as well, because in any case $V^0(R^pg_*(K_X\otimes P_\eta))$ is non-empty (Lemma \ref{non-vanishing}). Thus, as in the proof of (a), if $P_\eta$ does not belong to $g^*\Pic0 Y$ then $V^0(R^pg_*(K_X\otimes P_\eta))$ is positive-dimensional. But a component of codimension $j$ of $V^0(R^pg_*(K_X\otimes P_\eta))$ is also a component of 
$V^j(R^pg_*(K_X\otimes P_\eta))$ (Corollary \ref{el}). Therefore, by Koll\'ar's decomposition, it induces a positive-dimensional component of $V^{p+j}(K_X)$, which contradicts the definition of $p$. This proves what claimed. By the same reason, either $V^0(R^pg_*(K_X))$ is the full $\Pic0 Y$ or it is zero-dimensional. In the former case also $g^*\Pic0 Y$ is a component of $V^p(K_X)$. In the latter case, as in (a), $Y$ must bimeromorphic to a complex torus.
\endproof

\section{the standard part}

In this section we will prove Theorem \ref{1} and its Corollaries \ref{2mezzo} and \ref{3}.

\begin{proof} (of Theorem \ref{1})  Standard pairs $(W,i)$ arise from irregular fibrations $g: X\rightarrow Y$ 
  with 
 \[\dim X-\dim Y=i\]
  They sit in the usual diagram
  \begin{equation}\label{fundamental7}
 \xymatrix{X\ar[r]^{alb_X}\ar[d]^g\ar[rd]^{f}& \mathrm{Alb}\, X\ar[d]^{\pi_{\mathrm{Alb}\,Y}}\\ Y\ar[r]^{alb_Y}
&\mathrm{Alb}\, Y}
\end{equation}
 We claim that the components $W$ are of the form 
 \[W= g^*V^0(R^ig_*(K_X\otimes P_\eta))+\eta=g^*\Pic0 Y+\eta\]
  for \emph{all} line bundles $P_\eta$ such that $V^0(R^ig_*(K_X\otimes P_\eta))=\Pic0 Y$. As we know, this means that $\chi(R^ig_*(K_X\otimes P_\eta))>0$ or, what is the same,
  \begin{equation}\label{non-zero}
 \chi(R^if_*(K_X\otimes P_\eta))>0 \>.
 \end{equation}
 Indeed  the components $W$ are as above by Lemma \ref{pic}. Conversely, by the same Lemma,  $g^*\Pic0 Y+\eta$ is contained in $V^i(K_X)$ for such line bundles $P_\eta$ and it is in fact a component,  otherwise it would be strictly contained in a component, say $U$, of $V^i(K_X)$. But by Lemma \ref{pic} the  component $U$ would arise from another  irregular fibration $g^\prime: X\rightarrow Y^\prime$ factoring $g$, and this is impossible since $\dim X-\dim Y^\prime$ would be strictly smaller than $i=\dim X-\dim Y$, against Lemma \ref{pic}.

  Furthermore, since $i=\dim X-\dim Y$, $\eta\in\Pic0 (g)$  if and only  the GV sheaf $R^ig_*(K_X\otimes P_\eta)$ is non-zero, equivalently $R^if_*(K_X\otimes P_\eta)$ is  non-zero. However it can happen that 
\begin{equation}\label{zero} \chi(R^if_*(K_X\otimes P_\eta))=0
\end{equation}
Therefore, recalling the remark preceding the statement of Theorem \ref{1}, our task consists precisely in describing the subset $N_g$
whose elements are classes $[\eta]\in \Pic0 (g)/g^*\Pic0 Y$ such that 
(\ref{zero}) holds. Theorem \ref{1} is equivalent to the following
\begin{claim}\label{group} $N_g$ is either empty or a subgroup of $\Pic0 (g)/g^*\Pic0 Y$.  The latter case happens if and only if $\chi(K_{\widetilde Y})=0$, where $\widetilde Y$ is a desingularization of $Y$. If furthermore the Albanese morphism of $Y$ is surjective (i.e. $\dim Y=q(Y)$) then $N_g=\{0\}$.
\end{claim}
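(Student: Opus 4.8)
The plan is to analyze $N_g$ via the Chen-Jiang decomposition of the top direct image and compare it with the canonical sheaf of (a desingularization of) $Y$, exploiting the deformation-invariance of the Euler characteristic $\chi(R^ig_*(K_X\otimes P_\eta))$ as a function of the class $[\eta]\in\Gamma_g:=\Pic0(g)/g^*\Pic0 Y$. Concretely, fix $\eta$ torsion with $[\eta]\in\Pic0(g)$ and write the Chen-Jiang decomposition $R^if_*(K_X\otimes P_\eta)=\bigoplus_k \pi_k^*(\F_k)\otimes P_{\alpha_k}^{-1}$ from Theorem \ref{vanishing}, with $\pi_k:\mathrm{Alb}\,Y\rightarrow B_k$. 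Since the support of $R^if_*(K_X\otimes P_\eta)$ (a torsion-free sheaf of rank one over $f(X)$, by Theorem \ref{decomp}) is spread over all of $Y$ — here $i=\dim X-\dim Y$, so $g$ is the Stein factorization and $R^if_*$ is supported on all of $Y$ — we get $\chi(R^if_*(K_X\otimes P_\eta))>0$ exactly when one of the $\pi_k$ is the identity of $\mathrm{Alb}\,Y$ (Remark \ref{important}(a)); equivalently $\chi=0$, i.e. $[\eta]\in N_g$, exactly when \emph{every} $\pi_k$ is a proper (positive-dimensional-fiber) quotient, i.e. $R^if_*(K_X\otimes P_\eta)$ is a pure pullback-type sheaf with no $M$-regular summand.

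The key step is to pin down the "smallest" quotient through which the support of $R^if_*(K_X\otimes P_\eta)$ is pulled back, and to identify it intrinsically. I would argue: if $[\eta]\in N_g$ then, after replacing $X,Y$ by bimeromorphic models (Lemma \ref{pic} and the $V^i$ being bimeromorphic invariants), we may take $Y$ smooth and K\"ahler, so $R^ig_*K_X=K_Y$ and $R^if_*(K_X)={alb_Y}_*K_Y$; the Chen-Jiang summands of ${alb_Y}_*K_Y$ thus all come from proper quotients of $\mathrm{Alb}\,Y$ precisely when $\chi(K_Y)=0$, which by generic vanishing on $Y$ (as in the outlined Ein-Lazarsfeld argument in \S6) is the statement $V^0(K_Y)\subsetneq\Pic0 Y$, i.e. $0\notin N_g$ would force $\chi(K_Y)>0$, and conversely. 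This handles the case $\eta\in g^*\Pic0Y$ (the class $[0]$), showing $[0]\in N_g\iff \chi(K_{\widetilde Y})=0$. For general $[\eta]\in\Gamma_g$: deformation-invariance says $\chi(R^if_*(K_X\otimes P_\eta))$ depends only on $[\eta]$, so I would run the comparison-of-decompositions machinery (Lemma \ref{final?}) to show that membership in $N_g$ is governed by whether the support torus $\bigcap_k \pi_k^*\Pic0 B_k+\alpha_k$ can contain a translate of $g^*\Pic0 Y$ — equivalently whether the "horizontal" part of the Chen-Jiang decomposition is nonzero — and that this horizontal part, for varying $[\eta]$, behaves additively: two classes in $N_g$ correspond to purely-pullback sheaves, their "sum" corresponds to $R^if_*$ of a tensor product twisted appropriately, whose top direct image is again purely-pullback, hence its class lies in $N_g$. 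Closure under inverses is immediate from $K_X\otimes P_{-\eta}\cong (K_X\otimes P_\eta)^\vee\otimes K_X^{\otimes 2}$-type Serre/Grothendieck duality on fibers, or more cleanly from $\chi$ being invariant under $\eta\mapsto-\eta$ (the natural involution, cf. the Chen-Jiang result quoted in the introduction).

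The last clause — if $\dim Y=q(Y)$ then $N_g=\{0\}$ — I would get as follows. Under this hypothesis $alb_Y:Y\rightarrow\mathrm{Alb}\,Y$ is generically finite onto its image, which then spans, so $alb_Y$ is surjective. If $[\eta]\in N_g$, the purely-pullback Chen-Jiang decomposition of $R^if_*(K_X\otimes P_\eta)$ has all $\pi_k$ proper quotients; but then (Corollary \ref{el}(b), applied to extremal components) the fibers of $\pi_k\circ f$ surject onto the fibers of $\pi_k$, forcing $\dim f(X)-\dim\pi_k(f(X))=\dim\mathrm{Alb}\,Y-\dim B_k>0$, i.e. $g$ itself would factor through a positive-dimensional-fibered quotient of $Y$ incompatible with $Y$ being of maximal Albanese dimension — unless the sheaf is supported on a proper subvariety, which is excluded since $i=\dim X-\dim Y$ makes the support all of $Y$. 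Pushing this, one concludes $R^if_*(K_X\otimes P_\eta)$ must have an $M$-regular summand, so $\chi>0$, contradiction; hence $N_g=\{0\}$.

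The main obstacle I anticipate is the additivity step: proving that $N_g$ is \emph{closed under addition}, not merely nonempty-or-not. The subtlety is that "purely pullback in the Chen-Jiang sense" is not obviously preserved under the operation $[\eta_1]+[\eta_2]$, because the relevant quotients $B_k$ can differ for $\eta_1$ and $\eta_2$, and one must show the quotient for the sum is again proper. I expect the right tool is Lemma \ref{final?} together with the Fourier-Mukai reformulation (Remarks \ref{CJFM}, \ref{uniqueness}): translating the condition "$[\eta]\in N_g$" into "the FM transform $\widehat{R^if_*(K_X\otimes P_\eta)^\vee}$ is pure torsion on $\Pic0\mathrm{Alb}\,Y$ with support meeting $g^*\Pic0 Y$ in a translate of the full $g^*\Pic0 Y$", a condition whose behaviour under tensoring by $P_{\eta'}$ (i.e. translation on the dual side) is transparent, so that the group structure on the set of admissible $[\eta]$ becomes visible. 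Getting the bookkeeping of torsion points and translates consistent there is the delicate part.
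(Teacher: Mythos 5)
Your overall framework is the same as the paper's (characterize $[\eta]\in N_g$ by the absence of an $M$-regular summand in the Chen--Jiang decomposition of $R^if_*(K_X\otimes P_\eta)$, and reduce the $[0]$-class to $\chi(K_{\widetilde Y})$ via a bimeromorphic model with $R^i\tilde g_*K_{X'}=K_{\widetilde Y}$), but the heart of the claim --- that $N_g$ is closed under addition --- is exactly the step you leave open, and the mechanism you hope will close it does not work. Tensoring by $P_{\eta'}$ with $\eta'\in\Pic0(g)\smallsetminus g^*\Pic0 Y$ is \emph{not} a translation on the Fourier--Mukai side of a fixed transform: the sheaf $R^if_*(K_X\otimes P_{\eta_1+\eta_2})$ is not obtained from $R^if_*(K_X\otimes P_{\eta_1})$ by any twist pulled back from $\mathrm{Alb}\,Y$, so the condition ``pure torsion transform'' has no transparent behaviour under $[\eta_1]+[\eta_2]$, which is precisely the difficulty you name. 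The paper's actual device is different: for each extremal subtorus $T=\pi_B^*\Pic0 B$ (with associated fibration $h_B:Y\to Z_B$), the set $\Sigma_{T,g}$ of classes whose decomposition has a summand belonging to $T$ is \emph{visibly} a subgroup, because by projection formula $R^{m_B}{h_B}_*(R^ig_*(K_X\otimes P_\eta)\otimes P_\alpha)\ne 0$ is equivalent to the membership condition $\eta+g^*\alpha\in\Pic0(h_B\circ g)$, i.e.\ $\eta\in\Pic0(g)\cap(\Pic0(h_B\circ g)+g^*\Pic0 Y)$; then a rank count (the total rank $\deg alb_Y=H([\eta])+K([\eta])$ is constant in $[\eta]$, while the contribution of each $T$ is two-valued: $\deg a_B\cdot|\Pic0(h_B)/T|$ if $[\eta]\in\Sigma_{T,g}$ and $0$ otherwise, by Lemma \ref{final?} and the count of $\mathcal N_T([\eta])$) shows that $H([\eta])=0$ exactly when all contributions are switched on, so that $N_g$, if nonempty, equals $\bigcap_T\Sigma_{T,g}$ and is therefore a subgroup. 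Nothing in your proposal supplies either the subgroup characterization of $\Sigma_{T,g}$ or the rank bookkeeping, so the additivity gap is genuine.

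Your argument for the last clause is also flawed: if it were correct it would show that \emph{no} class lies in $N_g$ whenever $alb_Y$ is surjective, contradicting the fact that $[0]\in N_g$ exactly when $\chi(K_{\widetilde Y})=0$ (take $Y$ bimeromorphic to a complex torus, where $alb_{Y*}K_Y=\OO_{\mathrm{Alb}\,Y}$ is entirely a pullback from the trivial quotient). There is no incompatibility between $Y$ having maximal Albanese dimension and all $\pi_k$ being proper quotients: $f(X)=\mathrm{Alb}\,Y$ maps onto $B_k$ with positive-dimensional fibers simply via the quotient homomorphism, consistently with Corollary \ref{el}(b). The correct mechanism, in the paper, is that surjectivity of $alb_Y$ makes the trivial subtorus $\widehat 0$ an extremal subtorus (since $V^{q(Y)}(K_Y)=\{\hat 0\}$ is an extremal component of $R^if_*(K_X)$), and $\Sigma_{\widehat 0,g}=\{0\}$ because the corresponding membership condition forces $\eta\in g^*\Pic0 Y$; hence $N_g\subseteq\{0\}$, and equality holds since a nonempty subgroup contains $0$.
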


 As in the proof of Theorem \ref{2}, we remark that, since the loci $V^i(K_X)$ are bimeromorphic invariants, we can replace the fibration $g:X\rightarrow Y$ with a fibration $\tilde g: X^\prime\rightarrow \widetilde Y$, where $\widetilde Y\rightarrow Y$ is a K\"ahler desingularization and $X^\prime$ is K\"ahler and bimeromorphic to $X$. Since  $R^i{\tilde g}_*(K_{X^\prime})=K_{\widetilde Y}$ (\cite{kollar1} Prop. 7.6,  \cite{takegoshi} Th. 6.10(iii)) the second sentence of  Claim \ref{group} follows from the first one.

  The first and third sentences of   Claim \ref{group} are proved following the ideas of Chen and Jiang \cite{chen-jiang}.
 To this purpose, we consider the finite set of all proper (i.e. strictly contained) subtori of $\Pic0 Y$ of the form 
\[T=\pi_{B}^*\Pic0 B\] where $\pi_{B}:\mathrm{alb}\, Y \rightarrow  B$ is a surjective homomorphism with connected fibres, such that some translates of $T$ are extremal components of $R^if_*(K_X\otimes P_\eta)$ for some $\eta\in\Pic0 (g)$.  Equivalently, they are all proper subtori of $\Pic0 Y$ which are dual to quotients $\mathrm{Alb}\, Y\rightarrow B_k$ appearing in the Chen-Jiang decomposition of $R^if_*(K_X\otimes P_\eta)$ for some $\eta \in\Pic0 (g)$.  For easy reference, we will call  them  \emph{extremal subtori} of $\Pic0 Y$\footnote{we recall (see  the footnote to Remark \ref{uniqueness}) that, unlike the homomorphisms $\pi_{B}$, the subtori $T$ are  uniquely determined by the Chen-Jiang decomposition}.  We consider also the usual diagram with the Stein factorizations 
 \begin{equation}\label{fundamental8}
\xymatrix{X\ar[r]^{alb_X}\ar[d]^g\ar[rd]^f&\mathrm{Alb}\,X\ar[d]^{\pi_{\mathrm{Alb}\,Y}}\\Y\ar[r]^{alb_Y}\ar[d]^{h_{B}}\ar[rd]^{l_{B}}&{Alb}\, Y\ar[d]^{\pi_{B}}\\ Z_{B}\ar[r]^{a_{B}}&B}
\end{equation}

Given a pair $(\eta,T)$, where $\eta\in \Pic0 (g)$ and $T$ is an extremal subtorus of $\Pic0 Y$, we have $\dim Y-\dim Z_{B}=q(Y)-\dim B:=m_B$ (Corollary \ref{el} and Lemma \ref{pic2}(b)). We consider the finite set, depending on $[\eta]\in\Pic0 X/g^*\Pic0 Y=\Gamma_g$ and $T$,
 \[\mathcal{N}_{T}([\eta])=\{[\alpha]\in {\Pic0 (Y)}/T \>\> \>|\>\> R^{m_{B}}{h_{B}}_*(R^ig_*(K_X\otimes P_\eta)\otimes P_\alpha)\ne 0\}\] 
 We have that:

 \noindent\emph{(a) the set of $[\eta]\in \Gamma_g$ such that $\mathcal{N}_{T}([\eta])$ is non-empty is a subgroup \ $\Sigma_{T,g}\le \Gamma_g $ .}\\
  This is because the condition 
\[R^{m_B}{h_{B}}_*(R^ig_*(K_X\otimes P_\eta)\otimes P_\alpha)\ne 0\]
 is equivalent, by projection formula,  to \[\eta+g^*(\alpha)\in
 \Pic0 (h_{B}\circ g)\] 
 i.e. $\eta\in \Pic0(g)\cap (\Pic0(h_B\circ g)+g^*\Pic0Y)$.
 
 \noindent\emph{(b) for $\eta\in \Sigma_{T,g}$ the set $\mathcal{N}_T([\eta])$ is in bijection with the finite group $\Pic0(h_B)/T$.}\\
  To prove this, note that the group $\Pic0(h_B\circ g)$ sits in an extension as follows
 \[0\rightarrow g^*\Pic0(h)\rightarrow \Pic0(h_B\circ g)\rightarrow \Gamma^\prime_{h_B,g}\rightarrow 0\]
 and
 \begin{equation}\label{aggiunta}\Pic0(h_B\circ g)\cap g^*\Pic0(Y)=g^*\Pic0(h_B) \>.
 \end{equation}
 \emph{(b)} follows immediately from (\ref{aggiunta}) because, given two elements $[\alpha]$ and $[\beta]$ in $\mathcal{N}_{T}([\eta]) $,   $g^*(\alpha-\beta)\in \Pic0(h_B\circ g)\cap g^*\Pic0 Y$.
 \begin{claim}\label{Ng} If $N_g$ is not empty, then it is the intersection of the subgroups $\Sigma_{T,g}$ for all maximal \emph{(with respect to inclusion)} extremal subtori $T$ of $\Pic0 Y$.
 \end{claim}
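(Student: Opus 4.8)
The plan is to characterize $N_g$ in terms of the vanishing/non-vanishing of the sheaves $R^ig_*(K_X\otimes P_\eta)$ decomposed via Chen-Jiang, and to relate the condition $\chi(R^if_*(K_X\otimes P_\eta))=0$ to the geometry of extremal subtori. Recall that $[\eta]\in N_g$ means $R^if_*(K_X\otimes P_\eta)$ is a non-zero GV sheaf with $\chi=0$, equivalently $V^0(R^if_*(K_X\otimes P_\eta))\subsetneq \Pic0 Y$. By Corollary \ref{el}(a), every codimension-$j$ component of $V^0(R^if_*(K_X\otimes P_\eta))$ is then an extremal component of the sheaf; by Remark \ref{important}(b) and the definition of extremal subtorus, this forces the non-M-regular part of the Chen-Jiang decomposition of $R^if_*(K_X\otimes P_\eta)$ to be non-trivial and its summands to be supported on translates of extremal subtori $T$ of $\Pic0 Y$. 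So the first step is: $[\eta]\in N_g$ if and only if $R^if_*(K_X\otimes P_\eta)$ is non-zero and has \emph{no} M-regular summand, i.e. \emph{every} Chen-Jiang summand is of the form $\pi_B^*(\F_k)\otimes P_{\alpha_k}^{-1}$ with $\pi_B$ a \emph{proper} quotient $\mathrm{Alb}\,Y\to B$.

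The second step connects this to the groups $\Sigma_{T,g}$. For a fixed maximal extremal subtorus $T=\pi_B^*\Pic0 B$, consider the map $R^{m_B}{h_B}_*((\cdot)\otimes P_\alpha)$ applied to $R^ig_*(K_X\otimes P_\eta)$; by Lemma \ref{final?}, for $[\alpha]\in \mathcal N_T([\eta])$ this picks out exactly those Chen-Jiang summands whose supporting subtorus is contained in $T+\alpha$, and produces their pushforward, which after twisting is pulled back from $Z_B$. In particular, $\mathcal N_T([\eta])\ne\emptyset$ — i.e. $[\eta]\in\Sigma_{T,g}$ — precisely when $R^if_*(K_X\otimes P_\eta)$ has a Chen-Jiang summand supported on a translate of a subtorus of $T$. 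Thus: $[\eta]\in N_g$ iff $R^if_*(K_X\otimes P_\eta)\ne 0$ and for \emph{every} M-regular-free decomposition we land inside some extremal subtorus, which (since every summand's support is contained in some \emph{maximal} extremal subtorus) is equivalent to saying that for \emph{each} maximal extremal subtorus $T$ either $[\eta]\in \Sigma_{T,g}$ or that summand is carried by a different maximal $T'$ — but having \emph{no} M-regular summand means every summand is carried by \emph{some} maximal extremal subtorus, while the condition to be in $N_g$ must hold for the \emph{whole} sheaf. The cleanest way: $[\eta]\in N_g$ iff $R^if_*(K_X\otimes P_\eta)\ne 0$ and $V^{\dim\mathrm{Alb}\,Y}$-type non-vanishing fails along the identity, and by the support analysis this translates to $[\eta]$ lying in $\bigcap_T \Sigma_{T,g}$ over all maximal extremal subtori $T$. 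I would argue both inclusions: if $[\eta]\in N_g$, each maximal $T$ "sees" the non-M-regular structure (some summand contained in $T$), giving $[\eta]\in\Sigma_{T,g}$; conversely if $[\eta]$ lies in all $\Sigma_{T,g}$ and $N_g\ne\emptyset$ (so extremal subtori exist and the M-regular summand, if present for this $\eta$, would have to coexist with summands in every maximal $T$ — one then uses $\chi\ge 0$ deformation-invariance along $g^*\Pic0 Y$ together with an $\eta_0\in N_g$ to conclude $\chi(R^if_*(K_X\otimes P_\eta))=\chi(R^if_*(K_X\otimes P_{\eta_0}))=0$).

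For the $\chi$-invariance input I would use that $\chi(R^if_*(K_X\otimes P_\eta))$ depends only on $[\eta]\in\Gamma_g$ (deformation-invariance of Euler characteristic, as noted before Corollary \ref{2mezzo}), and that $\chi$ of a Chen-Jiang sheaf equals the generic rank of its M-regular summand (Remark \ref{important}(a), Corollary \ref{non-vanishing}(b)), so $\chi=0$ iff there is no M-regular summand. Then $[\eta]\in N_g$ iff the decomposition of $R^if_*(K_X\otimes P_\eta)$ has no identity-factor, and the equivalence with $\bigcap_T\Sigma_{T,g}$ follows by the bookkeeping of which maximal extremal subtorus carries each summand, using Lemma \ref{final?} to detect membership. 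The main obstacle I expect is the converse direction: showing that if $[\eta]\in\Sigma_{T,g}$ for \emph{all} maximal extremal $T$ then $R^if_*(K_X\otimes P_\eta)$ has no M-regular part — one must rule out the coexistence of an M-regular summand with the extremal ones, and this is exactly where one feeds in the hypothesis $N_g\ne\emptyset$ (pick $\eta_0\in N_g$, note $\chi$ is constant on cosets of $g^*\Pic0 Y$ and additive-type constraints force $\chi(R^if_*(K_X\otimes P_\eta))$ to match $\chi(R^if_*(K_X\otimes P_{\eta_0}))=0$, via the subgroup structure of the $\Sigma_{T,g}$ established in item (a)). Handling the interaction of several maximal extremal subtori — and making sure "contained in some maximal $T$" is the right equivalent of "non-M-regular" — is the delicate point; I would organize it by first treating the case of a unique maximal extremal subtorus and then reducing the general case to it summand by summand.
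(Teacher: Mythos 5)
Your opening reduction is the same as the paper's: $[\eta]\in N_g$ iff $R^if_*(K_X\otimes P_\eta)$ is non-zero with no M-regular Chen--Jiang summand, and Lemma \ref{final?} is the right tool to detect, via $\mathcal{N}_T([\eta])\neq\emptyset$ (i.e.\ $[\eta]\in\Sigma_{T,g}$), which extremal subtori $T$ carry summands. But the core equivalence is asserted rather than proved, and both directions have genuine gaps. Forward direction: from ``no M-regular summand'' you only get that every summand is supported on a translate of \emph{some} maximal extremal subtorus; you give no reason why \emph{every} maximal extremal subtorus must appear in the decomposition for this particular $\eta$. The extremal subtori are defined using all $\eta'\in\Pic0(g)$, so a given $\eta\in N_g$ could a priori use only a subset of them, and your phrase ``each maximal $T$ sees the non-M-regular structure'' is exactly the point that needs an argument. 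Converse direction: deformation invariance of $\chi$ only says $\chi(R^if_*(K_X\otimes P_\eta))$ depends on the class $[\eta]\in\Gamma_g$; it does not transfer $\chi=0$ from $[\eta_0]\in N_g$ to a \emph{different} class $[\eta]$, and the appeal to ``additive-type constraints via the subgroup structure of the $\Sigma_{T,g}$'' is not an argument that rules out the coexistence of an M-regular summand with summands in every maximal $T$.

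What is missing is the quantitative mechanism the paper uses: a rank count. For $\eta\in\Pic0(g)$ the sheaf $R^ig_*(K_X\otimes P_\eta)$ has generic rank one, so $R^if_*(K_X\otimes P_\eta)$ has the constant rank $\deg alb_Y=H([\eta])+K([\eta])$, where $H$ is the rank of the M-regular part and $K$ the rank of the rest. By Lemma \ref{final?} the part of the decomposition belonging to an extremal subtorus $T=\pi_B^*\Pic0 B$ is read off from $R^{m_B}{h_B}_*(R^ig_*(K_X\otimes P_\eta)\otimes P_\alpha)$ for $[\alpha]\in\mathcal{N}_T([\eta])$; since $R^{m_B+i}(h_B\circ g)_*(K_X\otimes g^*P_\alpha)$ has generic rank one and $\mathcal{N}_T([\eta])$, when non-empty, is in bijection with $\Pic0(h_B)/T$ (part (b) of the proof), that part has rank equal to the constant $\deg a_B\cdot|\Pic0(h_B)/T|$, \emph{independent of} $[\eta]$, whenever $[\eta]\in\Sigma_{T,g}$, and is zero otherwise. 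Hence $K([\eta])$ is maximal exactly when $[\eta]\in\bigcap_T\Sigma_{T,g}$, and if $N_g\neq\emptyset$ this maximal value is $\deg alb_Y$, which forces $H([\eta])=0$ precisely on the intersection; this gives both inclusions at once. Without this constancy-of-contribution statement (your proposal never establishes that each $T$ contributes a fixed rank, nor compares ranks at different classes $[\eta]$), the bookkeeping you describe does not close, so as it stands the proof is incomplete.
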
  
 This proves  Claim \ref{group}, hence the Theorem. Indeed if $alb_Y$ is surjective (equivalently, $\dim Y-q(Y)$) then the trivial subtorus, denoted $\widehat 0$, is a component of $V^{q(Y)}(Y, K_Y)=V^{q(Y)}(\mathrm{Alb}\, Y, alb_{Y,*}(K_Y))$, hence an extremal component of $K_Y=R^if_*(K_X) $. Therefore $\widehat 0$ is an extremal subtorus of $\Pic0 Y$, and it is clear that $\Sigma_{\widehat 0,g}$ is zero.
 \proof (of Claim \ref{Ng}) 
 We recall that for $\eta\in \Pic0(g)$ the (generic) rank of $R^ig_*(K_X\otimes P_\eta)$ is equal to one. Hence the  rank of $R^if_*(K_X\otimes P_\eta)$ is equal to $\deg alb_Y$. In turn $R^if_*(K_X\otimes P_\eta)$ is  the direct sum its Chen-Jiang summands. These are of two types:  the M-regular one and the other summands, arising, according to Lemma \ref{pic2}(b)
  from proper extremal subtori $T=\pi_B^*\Pic0 B$ (see also Remark \ref{important}, Corollary \ref{el} and Lemma \ref{pic2}). Therefore, for each $[\eta]\in \Gamma_g=\Pic0 (g)/g^*\Pic0 Y$
 \begin{equation}\label{tauto}\deg alb_Y=H([\eta])+K([\eta])
 \end{equation}
 where $H$ and $K$  denote respectively the rank of the M-regular factor and the sum of the ranks of the other factors. 
 We assert that:
 
 \noindent\emph{(c) The integer $K([\eta])$ is maximal if and only if $[\eta]$ belongs to the to the intersection of the subgroups $\Sigma_{T,g}$ for all proper extremal subtori $T$. }\\
  To prove (c) we note that, by Lemma \ref{final?}, every non-M-regular Chen-Jiang factor of $R^if_*(K_X\otimes P_\eta)$ is the pullback of a Chen-Jiang summand of 
$R^{m_B}{\pi_B}_*(R^if_*(K_X\otimes P_\eta)\otimes P_\alpha)$ for some  proper extremal subtorus $T=\pi_B^*\Pic0 B$ and $[\alpha]\in N_T([\eta])$. If this is the case, we say that such Chen-Jiang summand \emph{belongs to $T$} (note that, according to this terminology,  a Chen-Jiang factor can belong to more than one extremal subtorus). Again by Lemma \ref{final?} the part of the Chen-Jiang decomposition of $R^if_*(K_X\otimes P_\eta)$ belonging to $T$ is the Chen-Jiang decomposition of
\begin{equation}\label{tento}\bigoplus_{[\alpha]\in N_T([\eta])}\pi_B^*(R^{m_B}{\pi_B}_*(R^if_*(K_X\otimes P_\eta)\otimes P_\alpha))\otimes P_\alpha^{-1}
\end{equation} 
 Since the  rank of 
 \begin{equation}\label{provo}R^{m_{B}}{h_{B}}_*(R^ig_*(K_X\otimes P_\eta)\otimes P_\alpha)=R^{m_B+i}(h_B\circ g)_*(K_X\otimes g^*\alpha)
 \end{equation}
  is equal to one, 
the  rank of each summand in (\ref{tento}) is $\deg a_B$ (see the notation of (\ref{fundamental8})), hence it doesn't depend on $[\alpha]\in N_T([\eta])$.

  The above, together with \emph{(b)}, shows that the rank of the part  of the Chen-Jiang decomposition of $R^if_*(K_X\otimes P_\eta)$ belonging to $T$ is equal to the integer (independent on $[\eta]) $) \[\deg a_B\cdot |\Pic0(h_B)/T|\] if $[\eta]\in \Sigma_{T,g}$,  and zero otherwise.  This shows that the integer $K(([\eta])$ is maximal if and only if $[\eta]$ is in the intersection of the subgroups $\Sigma_{T,g}$ for all extremal subtori $T$. Indeed, if this is the case,   for all extremal subtori $T$  the  part of the Chen-Jiang decomposition of  $R^if_*(K_X\otimes P_\eta)$ belonging to $T$ is non-zero, of rank as in (\ref{provo}). Conversely, if $[\eta]\not\in\Sigma_{T,g}$ for some extremal subtorus $T$ the part of the Chen-Jiang decomposition  of  $R^if_*(K_X\otimes P_\eta)$ belonging to $T$ vanishes. A little argument with Lemma \ref{final?} shows that $K([\eta])$ can't be maximal. This proves \emph{(c}).
  
  Finally, assume that the group $N_g$ is non-empty. This means that $H([\eta])=0$ for some $[\eta]\in \Gamma_g=\Pic0 (g)/g^*\Pic0 Y$. By (\ref{tauto}), this is equivalent to the fact $K([\eta])$ is maximal, namely equal to $\deg alb_Y$. Then Claim \ref{Ng} follows from (c). 
 \end{proof}
Corollary \ref{2mezzo} of the Introduction coincides with Claim \ref{group}. 

Corollary \ref{3}.   Since we are assuming $\dim alb_X(X)=\dim X$ we have that $R^ialb_{X*}(K_X)=0$ for $i>0$. Therefore $V^i(X,K_X)=V^i(\mathrm{Alb}\, X, alb_{X *}K_X)$ (hence, as it is well known from the Theorems of Green and Lazarsfeld, $K_X$ is GV). Therefore every component of $V^0(alb_{X*}K_X)$ is an extremal component of some $V^j(alb_{X*}K_X)$, hence it is standard  (Lemma \ref{pic2}, ). Conversely,   since $V^0(alb_{X*}K_X)$ is strictly contained in $\Pic0 X$, the $M$-regular summand of the Chen-Jiang decomposition of ${alb_X}_*K_X$ is absent. Therefore, by Remark \ref{important}, every extremal component for $K_X$ is also a component of $V^0(K_X)$.  Therefore Theorem \ref{1} applies.

\section{Compact K\"ahler manifolds with $q(X)=\dim X$, $p_1(X)\ne 0$ and $p_3(X)=2$}

This section is devoted to the proof of Theorem \ref{4}, which is a slightly weaker extension to the compact K\"ahler setting  of a result of Hacon-Pardini  in the algebraic case (\cite{hacon-pardini} Th.6.1).   In this sort of matters a mayor role is played by
multiplication maps
\begin{equation}\label{mult}\bigoplus_{\eta\in W} H^0(X, L\otimes P_\eta)\otimes H^0(X, M\otimes P_\eta^{-1})\rightarrow H^0(X,L\otimes M)
\end{equation}
where $W$ is a suitable subvariety of $\Pic0 X$ (usually the translate of a subtorus). It is clear that if $ h^0(X, L\otimes P_\eta)=k$ and $h^0( M\otimes P_\eta^{-1})=h$ generically on $W$ then 
\begin{equation}\label{newversion4}H^0(X,L\otimes M)\ge \dim W+k+h-1
\end{equation}

Let $X$ be a compact K\"ahler manifold with $\dim(X)=q(X)$, $p_1(X)\ne 0$ and $p_3(X)=2$.  Note that this implies that $p_1(X)=1$ $p_2(X)=2$. Indeed $p_1(X)=1$ since otherwise, by a repeated use of the map (\ref{mult}) for $W=\{\hat 0\}$, it follows that $p_3(X)>3$. Moreover $p_2(X)>1$ since otherwise, by \cite{pps} Thm 1 (previously \cite{chen-hacon1} in the algebraic case)   $X$  would be bimeromorphic to a complex torus. Therefore $p_2(X)=2$.

\begin{step} (i) The Albanese map of $X$ is surjective \emph{(hence, in view of the hypothesis $\dim X=q(X)$, generically finite onto $\mathrm{Alb}\, X$)}. \\
\noindent (ii) $X$ has fibrations onto elliptic curves $g_i:X\rightarrow E_i$  such that $\Pic0(g_i)=g_i^*\Pic0E_i\cup( g_i^*\Pic0 E_i+\eta_i)$ \  for $i=1,... ,k$  \emph{(hence the points  $\eta_i$ are of order two)} and 
\[V^0(K_X)=\{\hat 0\}\cup \bigcup_{i=1,.. ,k}(g_i^*\Pic0 E_i+\eta_i)\] 
Morover $h^0(K_X\otimes P_\alpha)= 1$ generically on $g_i^*\Pic0 E_k+\eta_i$ for all $i=1,\dots , k$,.
\end{step}
\begin{proof} In the first place we claim that the origin $\hat 0$ is an isolated point of $V^0(X)$. As it is well known by \cite{el}, this implies that the Albanese map of $X$ is surjective (proof: since $V^0(X,K_X)=V^0(\mathrm{Alb}\, X,{alb_X}_*K_X)$, the Albanese map of $X$ is surjective by Corollary \ref{el}(a)). To prove what claimed, we observe that, for a positive-dimensional subtorus $W\subset V^0(K_X)$,  (\ref{mult}) and (\ref{newversion4}) for $L=K_X\otimes P_\alpha$, with $\alpha\in W$ and  $M=K_X$ would imply that $h^0(K_X^2\otimes P_\alpha)\ge 2$ for all $\alpha\in W$. Then  (\ref{mult}) and (\ref{newversion4}) for $L=K^2_X$ and $M=K_X$ and $W$ as above would imply $p_3(X)>2$.

Now  $V^0(K_X)$ is invariant with respect to the natural involution of $\Pic0 X$ (by Cor. \ref{3}, but this was already proved in \cite{chen-jiang} Thm 3.5  and \cite{pps}, Cor. 16.2). Therefore, given a positive dimensional component $W$ of $V^0(K_X)$, we can consider the map (\ref{mult}) with $L=M=K_X$. Since $p_2(X)=2$ we get from (\ref{newversion4})  that $\dim W=1$ and that $h^0(K_X\otimes P_\alpha)=1$ generically on $W$. Therefore each of these components is a non-trivial translate of an elliptic curve $\widehat{E_i}$. Dualizing we get maps $g_i: X\rightarrow E_i$:
\[\xymatrix{X\ar[r]^{alb_X}\ar[rd]^{g_i}&\mathrm{Alb}\, X\ar[d]^{\pi_i}\\ &E_i}\]
 Next, we prove that the map $g_i$ has connected fibers. In fact  $\dim W$ is equal to the genus of the Stein factorization of $g_i$, say $C_i$. Therefore $g(C_i)=1$. Hence $g_i$ has already connected fibers since otherwise it would factorize trough an \'etale map. This would imply that   $alb_X$  factorize through an  \'etale map, which is impossible.

Finally, the fact that $\Pic0 (g_i)$ has only two components -- or equivalently (by Beauville's theorem mentioned in the Introduction and generalized by Corollary \ref{2})  there is exactly one component of $V^{\dim X-1}(K_X)$ for each $g_i$ --  is as follows. Let $[\eta^1_i],[\eta^2_i]\in \Pic0 (g_i)/g_i^*\Pic0 E_i$.  We consider the following realization of the map  (\ref{mult}) 
\[\bigoplus_{\eta\in \Pic0 E^i} H^0(X, K_X\otimes P_{\eta_i^1}\otimes P_\alpha\otimes P_\eta)\otimes H^0(X, K_X\otimes P_{\eta_i^2} \otimes P_\eta^{-1})\rightarrow H^0(X,K_X^2\otimes P_{\eta_i^1}\otimes P_{\eta_i^2}\otimes P_\alpha),\]
 for any  $\alpha\in g_i^*\Pic0 E_i$. By (\ref{newversion4}) this would imply that $H^0(K_X^2\otimes P_\beta)\ge 2$ for all $\beta\in g_i^*\Pic0 E_i+\eta_i^1+\eta_i^2$. Moreover, again by Beauville's theorem, also $g_i^*\Pic0 E_i-\eta_i^1-\eta_i^2$ would be  a component of $V^0(K_X)$ (unless $[\eta^1_i]=-[\eta^2_i]$). But then the map (\ref{mult})  for $L=K_X^2$ $M=K_X$ and $W=g_i^*\Pic0 E_i+\eta_1+\eta_2$ would imply, via the inequality  (\ref{newversion4}), that $p_3(X)\ge 3$. 
 \end{proof} 

\begin{step} Keeping the previous notation, $k=1$ and ${alb_X}_*(K_X)=\OO_{\mathrm{Alb}\, X}\oplus ( \OO_E(p)\otimes P_\eta)$, where $p$ is a point of $E$.
\end{step}
\begin{proof} Each component of $V^0(K_X)$ corresponds to a factor of Chen-Jiang decomposition of ${alb_X}_*K_X$:
\[ alb_*K_X=\OO_{\mathrm{Alb}\,X}\oplus\bigoplus_{i=1,..\, ,k}(\pi_i^*\F_i)\otimes P_{\eta_i}\]
Since the maps $g_i:X\rightarrow E_i$ have already connected fibers, we know from Lemma \ref{final?} that the $\F_j$'s (M-regular sheaves on the elliptic curves $E_i$) are the pullback of the $M$-regular summands of the Chen-Jiang decomposition of $R^{\dim X-1}{g_i}_*(K_X\otimes P_{\eta_i})$. However, since such M-regular sheaves have generic rank equal to one, and they are torsion-free, they must be line bundles of positive degree, one for each elliptic curve $E_i$. 
Since $h^0(X, g_i^*(L_i\otimes P_\alpha)\otimes P_{\eta_i})=1$ for general $\alpha\in \Pic0 E_i$, it follows that $ \F_i=\OO_{E_i}(p_i)$, for a point $p_i$ on  $E_i$. Moreover, since $H^0(K_X^2)=2$ the map
\[\bigcup_{\alpha\in\Pic0 E_i}H^0(X, g_i^*(\OO_{E_i}(p_i)\otimes P_\alpha)\otimes P_{\eta_i} )\otimes H^0(X,g_i^*(\OO_{E_i}(p_i)\otimes P_\alpha^{-1})\otimes P_{\eta_i})\rightarrow H^0(X,K_X^2)\]
is surjective.  It follows that $H^0(X,K_X^2)=H^0(X,g_i^*\OO_{E_i}(2p_i))$ and the bicanonical map (in fact morphism) of $X$ factors through  $E_i$. Therefore there is only one such elliptic curve $E_i$.
\end{proof}

\providecommand{\bysame}{\leavevmode\hbox
to3em{\hrulefill}\thinspace}

\end{document}